\newcommand{\te}{Teich\-m\"ul\-ler}
\newcommand{\tes}{Teich\-m\"ul\-ler's}
\newtheorem{theorem}{Theorem}
\newtheorem{lemma}{Lemma}
\newtheorem{proposition}{Proposition}
\theoremstyle{definition}
\newtheorem{definition}{Definition}
\newcommand{\C}{\overline{\mathbb C}}
\newcommand{\CP}{\mathbb C}
\begin{document}

\title[]{Holomorphic Motions and Related Topics}

\author{Frederick Gardiner, Yunping Jiang, and Zhe Wang}

\thanks{The first and the second authors are supported by PSC-CUNY
awards}

%\date{January, 2008}
\subjclass[2000]{Primary 37F30, Secondary 30C62}
\begin{abstract}
In this article we give an expository account of the holomorphic
motion theorem based on work of M\~ane-Sad-Sullivan, Bers-Royden,
and Chirka. After proving this theorem, we show that tangent
vectors to holomorphic motions have  $|\epsilon \log \epsilon|$
moduli of continuity and then show how this type of continuity for
tangent vectors can be combined with Schwarz's lemma and
integration over the holomorphic variable to produce H\"older
continuity on the mappings. We also prove, by using holomorphic
motions, that Kobayashi's and Teichm\"uller's metrics on the
Teichm\"uller space of a Riemann surface coincide. Finally, we
present an application of holomorphic motions to complex dynamics,
that is, we prove the Fatou linearization theorem for parabolic
germs by involving holomorphic motions.
\end{abstract}

\maketitle

\section{Introduction}

Suppose $\C={\mathbb C} \cup \{\infty\}$ is the extended complex
plane and $E\subset \C$ is a subset. For any real number $r>0$, we
let $\Delta_{r}$ be the disk centered at the origin in ${\mathbb
C}$ with radius $r$ and $\Delta$ be the disk of unit radius. A map
$$
h(c,z): \Delta \times E\to \C
$$
is called a holomorphic motion of $E$ parametrized by $\Delta$ and
with base point $0$ if
\begin{enumerate}
\item $h(0, z) =z$ for all $z\in E$, \item for every $c\in
\Delta$, $ z \mapsto h(c, z)$ is injective on $\C,$ and \item for
every $z\in E$, $c \mapsto h(c,z)$ is holomorphic for $c$ in
$\Delta$
 \end{enumerate}
We think of $h(c,z)$ as moving through injective mappings with the
parameter $c.$ It starts out at the  identity when $c$ is equal to
the base point  $0$ and moves holomorphically as $c$ varies in
$\Delta$.

We always assume $E$ contains at least three points, $p_1, p_2$
and $p_3.$  Then since the points $h(c,p_1), h(c,p_2)$ and
$h(c,p_3)$ are distinct for each $c \in \Delta,$ there is a unique
M\"obius transformation $B_c$ that carries these three points to
$0, 1,$ and $\infty.$  Since $B_c$ depends holomorphically on $c,$
$\tilde{h}(c,z)=h(c,B_c(z))$ is also a holomorphic motion and it
fixes the points $0, 1, \infty.$  We shall call it a normalized
holomorphic motion.

Holomorphic motions were introduced by M\`a\~n\'e, Sad and
Sullivan in their study of the structural stability problem for
the complex dynamical systems,~\cite{ManeSadSullivan}. They proved
the first result in the topic which is called the $\lambda$-lemma
and which says that any holomorphic motion $h(c,z)$ of $E$
parametrized by $\Delta$ and with base point $0$ can be extended
uniquely to a holomorphic motion of the closure $\overline{E}$ of
$E$ parametrized by $\Delta$ and with the same base point.
Moreover, $h(c,z)$ is continuous on $(c,z)$ and for any fixed $c$,
$z \mapsto h(c,z)$ is quasiconformal on the interior of
$\overline{E}$. Subsequently, holomorphic motions became an
important topic with applications to quasiconformal mapping,
Teichm\"uller theory and complex dynamics. After M\`a\~n\'e, Sad
and Sullivan proved the $\lambda$-lemma,  Sullivan and
Thurston~\cite{SullivanThurston} proved an important extension
result. Namely, they proved that any holomorphic motion of $E$
parametrized by $\Delta$ and with base point $0$ can be extended
to a holomorphic motion of $\C,$ but parametrized by a smaller
disk, namely, by $\Delta_{r}$ for some universal number $0<r<1.$
They showed that $r$ is independent $E$ and independent of the
motion. By a different method and published in the same journal
with the Sullivan-Thurston paper, Bers and
Royden~\cite{BersRoyden} proved that $r \geq 1/3$ for all motions
of all closed sets $E$ parameterized by $\Delta.$ They also showed
that on $\C$ the map $z \mapsto h(c,z)$ is quasiconformal with
dilatation no larger than $(1+|c|)/(1-|c|)$. All of these authors
raised the question as to whether $r=1$ for any holomorphic motion
of any subset of $\C$ parametrized by $\Delta$ and with base point
$0$. In~\cite{Slodkowski} Slodkowski gave a positive answer by
using results from the theory of polynomial hulls in several
complex variables. Other authors~\cite{AstalaMartin}~\cite{Douady}
have suggested alternative proofs.

In this article we give an expository account of a recent proof of
Slodkowski's theorem presented by Chirka in~\cite{Chirka}. (See
also Chirka and Rosay~\cite{ChirkaRosay}.) The method involves an
application of Schauder's fixed point
theorem~\cite{CourantHilbert} to an appropriate operator acting on
holomorphic motions of a point and on showing that this operator
is compact. The compactness depends on the smoothing property of
the Cauchy kernel acting on vector fields tangent to holomorphic
motions.   The main theorem is the following.

\vspace*{10pt}
\begin{theorem}[The Holomorphic Motion Theorem]~\label{hmt}
Suppose $h(c,z): \Delta\times E\to \C$ is a
holomorphic motion of a closed subset $E$ of $\C$ parameterized by
the unit disk. Then there is a holomorphic motion $H (c,z): \Delta
\times \C\to \C$ which extends $h(c,z): \Delta \times E\to\C$.
Moreover, for any fixed $c\in \Delta$, $h(c,\cdot): \C\to\C$ is a
quasiconformal homeomorphism whose quasiconformal dilatation
$$
K(h(c,\cdot)) \leq \frac{1+|c|}{1-|c|}.
$$
The Beltrami coefficient of $h(c, \cdot)$ given by
$$
\mu(c,z)=\frac{\partial h(c, z)}{\partial
\overline{z}}/\frac{\partial h(c, z)}{\partial z}
$$
is a holomorphic function from $\Delta$ into the unit ball of the
Banach space ${\mathcal L}^{\infty}({\mathbb C})$ of all
essentially bounded measurable functions on ${\mathbb C}$.
\end{theorem}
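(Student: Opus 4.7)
Following the Chirka strategy alluded to in the introduction, I split the argument into an extension construction and a quantitative regularity deduction. First, by normal families, it suffices to prove the extension when $E$ is finite: given an increasing sequence of finite sets $E_n\uparrow E$ (each containing $p_1,p_2,p_3$), the extensions $H_n$ once constructed can be normalized so that each $H_n(c,\cdot)$ fixes $0,1,\infty$, and then the uniform dilatation bound $K(H_n(c,\cdot))\le (1+|c|)/(1-|c|)$ together with the fixed normalization makes $\{H_n(c,\cdot)\}$ a normal family on compact subdisks; a diagonal subsequence converges locally uniformly in $(c,z)$ to a holomorphic motion $H:\Delta\times\C\to\C$ extending $h$.

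For $E$ finite, I induct on $|E|$, enlarging $E$ one point at a time after normalizing so that $0,1,\infty\in E$ are fixed. The base case $|E|=3$ is trivial with $H(c,z)=z$. The inductive step is the following: given an extension $H_n:\Delta\times\C\to\C$ of a motion of an $n$-point set $E_n$, and a new point $w_0\in\C\setminus E_n$ with prescribed trajectory $c\mapsto h(c,w_0)$, produce a modified extension that also respects this additional constraint. Equivalently, one seeks a holomorphic family of quasiconformal self-deformations of $\C$ that moves the $H_n$-trajectory of $w_0$ onto $c\mapsto h(c,w_0)$ while preserving the trajectories $c\mapsto H_n(c,z_i)$ for $z_i\in E_n$.

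This deformation is produced by Schauder's fixed point theorem. I set up a closed convex subset $X$ of a Banach space of holomorphic $\C$-valued (or Beltrami-coefficient-valued) maps of $\Delta$, and a nonlinear operator $T:X\to X$ assembled from the Cauchy transform: given a trial element $\phi\in X$, one forms a holomorphic family $\mu_\phi(c,\cdot)\in{\mathcal L}^\infty({\mathbb C})$ supported near the existing trajectories and solves the measurable Riemann mapping theorem to obtain normalized quasiconformal maps $f_c^{\mu_\phi}$; fixed points of $T$ are the corrections that send the $H_n$-trajectory of $w_0$ to $h(\cdot,w_0)$. Continuity of $T$ is the standard continuous dependence of the Beltrami equation on parameters; the crucial input is compactness, which comes from the smoothing property of the Cauchy kernel acting on ${\mathcal L}^\infty$ coefficients (the same effect behind the $|\epsilon\log\epsilon|$ modulus of continuity mentioned in the abstract). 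Setting up this operator cleanly and verifying the compactness is the main technical obstacle of the proof.

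With the extension $H$ in hand, the remaining assertions follow quickly. For fixed $z\in{\mathbb C}$, $c\mapsto\mu(c,z)$ is holomorphic from $\Delta$ into the open unit disk (by holomorphic dependence of $H$ on $c$ and the fact that each $H(c,\cdot)$ is quasiconformal) and vanishes at $c=0$ since $H(0,\cdot)=\mathrm{id}$; Schwarz's lemma then yields $|\mu(c,z)|\le|c|$ almost everywhere, whence the dilatation bound $K(H(c,\cdot))\le(1+|c|)/(1-|c|)$. Finally, holomorphicity of $c\mapsto\mu(c,\cdot)$ as a map from $\Delta$ into the unit ball of ${\mathcal L}^\infty({\mathbb C})$ follows from the construction: the approximating Beltrami coefficients $\mu_\phi$ are holomorphic in $c$ by design, and this property passes both to the Schauder fixed point and to the normal-families limit; equivalently, by the weak-holomorphy principle, pointwise holomorphicity of $c\mapsto\mu(c,z)$ together with the uniform bound $\|\mu(c,\cdot)\|_\infty\le|c|<1$ gives holomorphicity in ${\mathcal L}^\infty$ via Morera tested against $L^1({\mathbb C})$ and the duality $(L^1)^*={\mathcal L}^\infty$.
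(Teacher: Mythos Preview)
Your outline has the right headline ingredients (Schauder fixed point, compactness from the Cauchy kernel, Schwarz for the dilatation bound, weak--$*$ duality for holomorphicity of $\mu$), but the heart of the argument --- the Schauder step --- is set up quite differently from the paper, and vaguely enough that the main difficulty is hidden rather than resolved.

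The paper does \emph{not} induct on $|E|$ or build a deformation via Beltrami coefficients and the measurable Riemann mapping theorem. Instead, for a finite normalized motion $\{f_i(c)\}_{i=2}^n$ (first reflected so as to be parametrized by a neighborhood of $\overline{\Delta}^c$ with base point $\infty$), one writes down a single explicit smooth function
\[
\Phi(c,w)=\sum_{i=2}^n \lambda(|w-f_i(c)|)\,\frac{\partial f_i}{\partial\bar c}(c),
\]
with $\lambda$ a bump function, and defines $\mathcal{K}f(c)=\mathcal{P}\bigl(\Phi(\cdot,f(\cdot))\bigr)(c)$ on the space $\mathcal{C}(\mathbb{C})$ of bounded continuous functions of $c$. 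For each $z\in\mathbb{C}$ the trajectory of $z$ is the (unique) Schauder fixed point of $f\mapsto z+\mathcal{K}f$; compactness comes from the H\"older estimate for $\mathcal{P}$, uniqueness and injectivity come from an integrating-factor trick ($e^{\mathcal{P}\psi}\phi$ is entire and bounded, hence constant), and one checks that the original $f_i$ already solve the fixed point equation. So the extension of a finite motion is obtained in one stroke, not point-by-point, and the operator lives on trajectories, not on Beltrami data. Your proposed operator on Beltrami coefficients through MRMT is a different animal, and as stated it is not clear what the convex set, the image bound, or the compactness mechanism would be.

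A second structural point: the paper's Schauder argument only yields an extension over $\Delta_r$ for each $r<1$ (the reparametrization $c\mapsto r/c$ loses a bit), so there is an honest intermediate step. One then proves, independently of the construction, that any holomorphic motion of $\C$ has $H(c,\cdot)$ quasiconformal (Sullivan--Thurston cross-ratio argument), that $c\mapsto\mu(c,\cdot)$ is holomorphic into $L^\infty$ (Bers--Royden, via testing against $L^1$), and hence $K\le(1+|c|)/(1-|c|)$ by Schwarz. Only then does a normal-families limit $r\to 1$ and $E_n\uparrow E$ finish the job. In your write-up the order is inverted: you invoke the dilatation bound on the $H_n$ to take the normal-families limit, but defer the proof of that bound until after $H$ is built. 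That is circular unless you explicitly establish the bound for each finite $H_n$ first.

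Finally, the sentence ``for fixed $z$, $c\mapsto\mu(c,z)$ is holomorphic into the open unit disk'' is not literally well posed, since $\mu(c,\cdot)$ is only an $L^\infty$ class; the paper's route (and your later parenthetical) via integrating against $L^1$ test functions and dominated convergence is the correct one, and should replace the pointwise claim.
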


To prove this result we  study the modulus of continuity of
functions in the image of the Cauchy kernel operator.  Then we
apply the Schauder fixed point theorem to a non-linear operator
given by Chirka in~\cite{Chirka}.

After proving this theorem, we show that tangent vectors to
holomorphic motions have  $|\epsilon \log \epsilon|$ moduli of
continuity and then show how this type of continuity for tangent
vectors can be combined with Schwarz's lemma and integration over
the holomorphic variable to produce H\"older continuity on the
mappings.

We also prove that  Kobayashi's and
 Teichm\"uller's metrics on the Teichm\"uller space
$T(R)$ of a Riemann surface coincide.  This method was observed by
Earle, Kra and Krushkal~\cite{EKK}. The result had already been
proved earlier by Royden \cite{Royden} for Riemann surfaces of
finite analytic type and by Gardiner~\cite{Gardiner3} for surfaces
of infinite type.

Finally, we present an application of holomorphic motions to
complex dynamics. In particular, we prove the Fatou linearization
theorem for parabolic germs. A similar type of argument has
recently been used by Jiang in~\cite{Jiang3,Jiang1,Jiang2} and
here we adapt the proof in~\cite[\S3]{Jiang3}. We  believe that
holomorphic motions will provide simplified proofs of many
fundamental results in complex dynamics.

\section{The ${\mathcal P}$-Operator and the Modulus of Continuity}

Let  ${\mathcal C}={\mathcal C}(\CP)$ denote the Banach space of
complex valued, bounded, continuous functions $\phi$  on $\CP$
with the supremum norm
$$
||\phi|| =\sup_{c\in {\CP}} |\phi(c)|.
$$
We use ${\mathcal L}^{\infty}$ to denote the Banach space of
essentially bounded measurable functions $\phi$ on $\CP$ with
${\mathcal L}^{\infty}$-norm
$$
||\phi||_{\infty} =\hbox{ess}\sup_{\CP} |\phi (\zeta)|.
$$

For the theory of quasiconformal mapping we are more concerned
with the action of ${\mathcal P}$ on ${\mathcal L}^{\infty}$. Here
the ${\mathcal P}$-operator is defined by
$$
{\mathcal P} f (c)=-\frac{1}{\pi}\int\int_{\mathbb C}
\frac{f(\zeta)}{\zeta-c}\; d\xi d\eta, \quad \zeta = \xi +i\eta
$$
where $f\in {\mathcal L}^{\infty}$ and has a compact support in
$\CP$. Then
$$
{\mathcal P} f(c)\longrightarrow 0 \quad \hbox{as}\quad
c\longrightarrow\infty.
$$
Furthermore, if $f$ is continuous and has compact support, one can
show that
\begin{equation}\label{deriv}
\frac{\partial ({\mathcal P} f)}{\partial{\overline{c}}} (c)
=f(c), \quad c\in {\mathbb C},
\end{equation}
and by using the notion of generalized
derivative~\cite{AhlforsBers} equation (\ref{deriv}) is still true
Lebesgue almost everywhere if we only know that $f$ has compact
support and is in ${\mathcal L}^p, p \geq 1.$

We first show the classical result that ${\mathcal P}$ transforms
${\mathcal L}^{\infty}$ functions with compact support in $\CP$ to
H\"older continuous functions with H\"older exponent $1-2/p$ for
every $p>2$. See for example~\cite{Ahlforsbook5}. We also show
that ${\mathcal P}$ carries ${\mathcal L}^{\infty}$ functions with
compact supports to functions with an $|\epsilon \log \epsilon|$
modulus of continuity.

\vspace*{10pt}
\begin{lemma}~\label{holder}
Suppose $p>2$ and
$$
\frac{1}{p}+\frac{1}{q} =1,
$$
so that $1<q<2.$  Then for any real number $R>0$, there is a
constant $A_{R}>0$ such that, for any $f\in {\mathcal L}^{\infty}$
with a compact support contained in $\Delta_{R}$,
$$
|| {\mathcal P}f|| \leq A_{R} ||f||_{\infty}
$$
and
$$
|{\mathcal P}f(c)-{\mathcal P}f(c')| \leq A_{R} ||f||_{\infty}
|c-c'|^{1-\frac{2}{p}}, \quad \forall c, c'\in {\mathbb C}.
$$
\end{lemma}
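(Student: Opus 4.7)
The plan is to prove both estimates by straightforward applications of H\"older's inequality on the integral representation of $\mathcal{P}f$, with the key insight that the kernel $1/(\zeta-c)$ lies in $\mathcal{L}^q$ locally exactly because $q<2$.

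For the sup bound, I would estimate pointwise
\[
|\mathcal{P}f(c)| \le \frac{1}{\pi}\,\|f\|_{\infty}\iint_{\Delta_R}\frac{1}{|\zeta-c|}\,d\xi\,d\eta,
\]
and then apply H\"older with exponents $p$ and $q$ to the integral on the right, giving $(\pi R^2)^{1/p}$ times $\bigl(\iint_{\Delta_R}|\zeta-c|^{-q}\,d\xi\,d\eta\bigr)^{1/q}$. The latter integral is uniformly bounded in $c\in\mathbb{C}$, because for any $c$ the region of integration lies in a disk of radius $2R$ around $c$ (or farther, where the integrand is smaller still), and $\iint_{|\zeta|\le 2R}|\zeta|^{-q}\,d\xi\,d\eta$ converges since $q<2$. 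This produces the constant $A_R$ for the first inequality.

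For the H\"older estimate, I would write
\[
\mathcal{P}f(c)-\mathcal{P}f(c') = -\frac{1}{\pi}\iint_{\Delta_R} f(\zeta)\left(\frac{1}{\zeta-c}-\frac{1}{\zeta-c'}\right)d\xi\,d\eta,
\]
pull out $\|f\|_{\infty}$ and apply H\"older's inequality with $p,q$ to the resulting integral over $\Delta_R$, bounding it by $(\pi R^2)^{1/p}$ times the $\mathcal{L}^q$-norm of the kernel difference taken over all of $\mathbb{C}$. To evaluate the latter, I would rewrite the difference as $(c'-c)/\bigl((\zeta-c)(\zeta-c')\bigr)$ and then perform the scaling change of variables $\zeta = c + (c'-c)s$. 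A direct calculation gives
\[
\iint_{\mathbb{C}}\left|\frac{1}{\zeta-c}-\frac{1}{\zeta-c'}\right|^{q}d\xi\,d\eta
= |c-c'|^{\,2-q}\iint_{\mathbb{C}}\frac{ds_1\,ds_2}{|s|^{q}|s-1|^{q}},
\]
and taking the $q$-th root yields $|c-c'|^{(2-q)/q}=|c-c'|^{1-2/p}$, the desired H\"older exponent.

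The only real step that requires checking is the convergence of the dimensionless integral $\iint_{\mathbb{C}}|s|^{-q}|s-1|^{-q}\,ds_1\,ds_2$; that is where the hypothesis $1<q<2$ is used in full. Near $s=0$ and near $s=1$ the integrand behaves like $|s|^{-q}$, integrable since $q<2$; near infinity it behaves like $|s|^{-2q}$, integrable since $q>1$. Call this constant $C_q$. Combining everything, with $A_R = \pi^{-1}\max\!\bigl((\pi R^2)^{1/p}B_R^{1/q},\,(\pi R^2)^{1/p}C_q^{1/q}\bigr)$ (where $B_R$ is the uniform bound from the first step), both inequalities in the statement hold. I expect no serious obstacle beyond bookkeeping on constants; the content is the scaling identity together with the $1<q<2$ integrability of the universal kernel.
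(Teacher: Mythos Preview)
Your proposal is correct and follows essentially the same route as the paper: for the H\"older estimate you both write the kernel difference as $(c-c')/((\zeta-c)(\zeta-c'))$, apply H\"older with exponents $p,q$, and use the scaling substitution to reduce to the finite universal integral $\iint_{\mathbb C}|s|^{-q}|s-1|^{-q}\,ds_1\,ds_2$. The only (inessential) difference is in the sup bound, where the paper skips H\"older and simply observes $\iint_{\Delta_R}|\zeta-c|^{-1}\,d\xi\,d\eta\le\iint_{\Delta_R}|\zeta|^{-1}\,d\xi\,d\eta=2\pi R$.
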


\begin{proof} The norm
$$
||{\mathcal P}f||=\sup_{c\in {\mathbb C}} \frac{1}{\pi} \Big|
\int\int_{{\mathbb C}} \frac{f(\zeta)}{\zeta-c}d\xi d\eta\Big|\leq
\sup_{c\in {\mathbb C}} \frac{1}{\pi} \int\int_{\Delta_{R}}
\frac{|f(\zeta)|}{|\zeta-c|}d\xi d\eta
$$
So
$$
||{\mathcal P}f|| \leq ||f||_{\infty} \sup_{c\in {\mathbb C}}
\frac{1}{\pi} \int\int_{{\Delta_{R}}} \frac{1}{|\zeta-c|} d\xi
d\eta \leq C_{1} ||f||_{\infty}
$$
where
$$
C_{1} = \frac{1}{\pi}  \int\int_{\Delta_{R}} \frac{1}{|\zeta|}d\xi
d\eta  =2R<\infty.
$$

Next
$$
|{\mathcal P}f(c)-{\mathcal P}f(c')| = \frac{1}{\pi} \Big|
\int\int_{{\mathbb C}} f(\zeta) \Big( \frac{1}{\zeta-c}-
\frac{1}{\zeta-c'}\Big) d\xi d\eta\Big|
$$
$$
\leq \frac{|c-c'|}{\pi} \int\int_{\Delta_{R}} \frac{|f(\zeta)|
}{|\zeta-c||\zeta-c'|} d\xi d\eta
$$
$$
\leq \frac{|c-c'|}{\pi} \Big( \int\int_{\Delta_{R}} |f(\zeta)|^{p}
d\xi d\eta\Big)^{\frac{1}{p}} \Big(\int\int_{\Delta_{R}}
\Big|\frac{1}{(\zeta-c)(\zeta-c')}\Big|^{q} d\xi
d\eta|\Big)^{\frac{1}{q}}.
$$
$$
\leq \pi^{\frac{1}{p}-1} R^{\frac{2}{p}} |c-c'| ||f||_{\infty}
\Big( \int\int_{\Delta_{R}}
\Big|\frac{1}{(\zeta-c)(\zeta-c')}\Big|^{q} d\xi
d\eta|\Big)^{\frac{1}{q}} \leq C_{2} ||f||_{\infty}
|c-c'|^{\frac{2}{q}-1}.
$$
where
$$
C_{2} =  \pi^{\frac{1}{p}-1} R^{\frac{2}{p}} \Big(
\int\int_{{\mathbb C}} \Big(\frac{1}{|z||z-1|}\Big)^{q} dx
dy|\Big)^{\frac{1}{q}} <\infty, \quad z=x+iy.
$$
Hence $A_{R}=\max\{ C_{1}, C_{2}\}$ satisfies the requirements of
the lemma.
\end{proof}

Next we prove a stronger form of continuity.

\vspace{10pt}
\begin{lemma}~\label{Zygmund}
Suppose the compact support of $f\in {\mathcal L}^{\infty}$ is
contained in $\Delta$. Then ${\mathcal P}f$ has an $|\epsilon\log
\epsilon|$ modulus of continuity. More precisely, there is a
constant $B$ depending on $R$  such that
$$
|{\mathcal P}f(c)-{\mathcal P}f(c')| \leq ||f||_{\infty} B
|c-c'|\log\frac{1}{|c-c'|}, \quad \forall \; c, c' \in
\Delta_{R},\;\; |c-c'|<\frac{1}{2}.
$$
\end{lemma}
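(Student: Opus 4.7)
The plan is to bound the difference $\mathcal{P}f(c)-\mathcal{P}f(c')$ via the partial fraction identity
$$\mathcal{P}f(c)-\mathcal{P}f(c') = -\frac{c-c'}{\pi}\iint_{\mathbb C}\frac{f(\zeta)}{(\zeta-c)(\zeta-c')}\,d\xi\,d\eta,$$
take absolute values to obtain
$$|\mathcal{P}f(c)-\mathcal{P}f(c')| \leq \frac{\|f\|_\infty\, \delta}{\pi}\iint_{\Delta}\frac{d\xi\,d\eta}{|\zeta-c|\,|\zeta-c'|}, \qquad \delta=|c-c'|,$$
and then show that the remaining geometric integral is bounded by a constant (depending only on $R$) times $\log(1/\delta)$. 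Since $c,c'\in\Delta_R$ and $\zeta\in\Delta$, the whole integrand lives in a fixed disk of radius $\sim R+1$ around $c$, which is what makes $R$ enter only through the constant.

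I will split $\Delta$ into three pieces: the near balls $D_1=\{|\zeta-c|\le 2\delta\}$ and $D_2=\{|\zeta-c'|\le 2\delta\}\setminus D_1$, and the far region $D_3$. On $D_3$ one has $|\zeta-c|\ge 2\delta$, hence $|\zeta-c'|\ge |\zeta-c|-\delta\ge \tfrac12|\zeta-c|$, so
$$\iint_{D_3}\frac{d\xi\,d\eta}{|\zeta-c|\,|\zeta-c'|}\le 2\iint_{2\delta\le|\zeta-c|\le R+1}\frac{d\xi\,d\eta}{|\zeta-c|^2}=4\pi\log\frac{R+1}{2\delta}.$$
This is the source of the $|\epsilon\log\epsilon|$ behavior.

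On $D_1$ I handle the double singularity by rescaling: set $\zeta=c+\delta w$ and write $c-c'=\delta e^{i\theta}$, so the integral becomes
$$\iint_{|w|\le 2}\frac{dw_1\,dw_2}{|w|\,|w+e^{i\theta}|},$$
a bounded quantity independent of $\delta$ and $\theta$ (by rotational symmetry reduce to $\theta=0$; both singularities at $w=0$ and $w=-1$ are of order $1/|w|$, hence integrable, over a bounded domain). The estimate on $D_2$ follows by the symmetric substitution centered at $c'$. Summing the three contributions gives
$$\iint_{\Delta}\frac{d\xi\,d\eta}{|\zeta-c|\,|\zeta-c'|}\le B'\log\frac{1}{\delta}$$
for some $B'=B'(R)$ whenever $\delta<1/2$, and multiplying through by $\delta\|f\|_\infty/\pi$ yields the lemma with $B=B'/\pi$.

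The main obstacle is the $D_1$ estimate: one cannot separate the two singular factors by Cauchy--Schwarz or AM--GM without losing integrability. The scaling trick $\zeta\mapsto c+\delta w$ is what turns this potentially problematic piece into a $\delta$-independent universal constant, and isolating this observation is the critical point of the argument. Everything else is a routine polar-coordinate calculation.
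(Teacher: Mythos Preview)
Your proof is correct and takes essentially the same approach as the paper's. Both start from the partial-fraction identity, rescale by $\delta=|c-c'|$ so that the near-singularity contribution becomes a $\delta$-independent constant, and extract the $\log(1/\delta)$ from a polar integral of $1/r^2$ over an annulus of inner radius $\sim\delta$ and outer radius $\sim 1+R$; the only cosmetic difference is that the paper performs one global substitution $\zeta''=(\zeta-c)/(c'-c)$ and then splits the resulting disk $\Delta_{(1+R)/\delta}$ into $\Delta_2$ and its complement, whereas you split first into three regions and rescale locally on the near balls.
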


\begin{proof}
Since
$$
|{\mathcal P}f(c)-{\mathcal P}f(c')| = \frac{1}{\pi} \Big|
\int\int_{{\mathbb C}} f(\zeta) \Big( \frac{1}{\zeta-c}-
\frac{1}{\zeta-c'}\Big) d\xi  d\eta\Big|
$$
$$
\leq \frac{1}{\pi}\int\int_{{\mathbb C}} |f(\zeta)| \Big|
\frac{1}{\zeta-c}- \frac{1}{\zeta-c'}\Big| d\xi  d\eta
$$
$$
\leq \frac{|c-c'|\|f\|_{\infty}}{\pi} \int\int_{\Delta} \frac{1
}{|\zeta-c||\zeta-c'|} d\xi  d\eta,
$$
if we put $\zeta'=\zeta-c=\xi '+i\eta'$, then
$$
|{\mathcal P}f(c)-{\mathcal P}f(c')| \leq
\frac{|c-c'|\|f\|_{\infty}}{\pi} \int\int_{\Delta_{1+R}} \frac{1
}{|\zeta'||\zeta'-(c'-c)|} d\xi ' d\eta'.
$$
The substitution $\zeta'' =\zeta' /(c'-c) =\xi ''+i\eta''$ yields
$$ |{\mathcal P}f(c)-{\mathcal P}f(c')| \leq
\frac{|c-c'|\|f\|_{\infty}}{\pi}
\int\int_{\Delta_{\frac{1+R}{|c'-c|}}} \frac{1
}{|\zeta''||\zeta''-1|} d\xi '' d\eta''.
$$
Since $|c-c'|<1/2$, we have $(1+R)/|c'-c|>2$. This implies that
$$
|{\mathcal P}f(c)-{\mathcal P}f(c')|
$$
$$
\leq \frac{|c-c'|\|f\|_{\infty}}{\pi} \left( \int\int_{\Delta_{2}}
\frac{1 }{|\zeta''||\zeta''-1|} d\xi '' d\eta'' +
\int\int_{\Delta_{\frac{1+R}{|c'-c|}}-\Delta_{2}} \frac{1
}{|\zeta''||\zeta''-1|} d\xi'' d\eta'' \right)
$$
Let
$$
C_{3} =\int\int_{\Delta_{2}} \frac{1 }{|\zeta''||\zeta''-1|} d\xi
'' d\eta''.
$$
Then
$$
|{\mathcal P}f(c)-{\mathcal P}f(c')| \leq
\frac{|c-c'|C_{3}\|f\|_{\infty}}{\pi}
+\frac{|c-c'|\|f\|_{\infty}}{\pi}
\int\int_{\Delta_{\frac{1+R}{|c'-c|}}-\Delta_{2}} \frac{1
}{|\zeta''||\zeta''-1|} d\xi '' d\eta''.
$$
If $|\zeta''|>2$ then $|\zeta''-1|>|\zeta''|/2$, and so
$$
\frac{1}{\pi}\int\int_{\Delta_{\frac{1+R}{|c'-c|}}-\Delta_{2}}
\frac{1 }{|\zeta''||\zeta''-1|} d\xi '' d\eta'' \leq \frac{1}{\pi}
\int\int_{\Delta_{\frac{1+R}{|c'-c|}}-\Delta_{2}} \frac{2
}{|\zeta''|^2} d\xi '' d\eta'' $$ $$\leq \frac{1}{\pi}
\int_{0}^{2\pi}\int_{2}^{\frac{1+R}{|c'-c|}} \frac{2 }{r^2} r dr
d\theta = 4\int_{2}^{\frac{1+R}{|c'-c|}} \frac{1 }{r} dr
$$
$$
= 4 \Big(\log\frac{1+R}{|c'-c|}-\log2\Big)= 4(-\log |c-c'| + \log
(1+R) -\log 2).
$$
Thus,
$$
|{\mathcal P}f(c)-{\mathcal P}f(c')| \leq
\frac{|c-c'|C_{3}\|f\|_{\infty}}{\pi} +4|c-c'|\|f\|_{\infty}
(-\log |c-c'| + \log (1+R) -\log 2)
$$
$$
=- |c-c'|\log|c-c'| \Big(\frac{4\pi \log (1+R)+
C_{3}\|f\|_{\infty}-4\pi \log 2}{-\pi \log|c-c'|} +
4\|f\|_{\infty}\Big)
$$
$$
\leq B \Big(- |c-c'|\log|c-c'|)\Big)
$$
where
$$
B= \frac{4\pi \log (1+R)+C_{3}\|f\|_{\infty}- 4\pi \log 2}{\pi
\log2} + 4\|f\|_{\infty}.
$$
\end{proof}

Now we have the following theorem.

\vspace*{10pt}
\begin{theorem}~\label{Zygmundth}
For any $f\in {\mathcal L}^{\infty}$ with a compact support in
$\CP$, ${\mathcal P}f$ has an $|\epsilon\log \epsilon|$ modulus of
continuity. More precisely, for any $R>0$, there is a constant
$C>0$ depending on $R$  such that
$$
|{\mathcal P}f(c)-{\mathcal P}f(c')| \leq C ||f||_{\infty}
|c-c'|\log\frac{1}{|c-c'|}, \quad \forall \; c, c' \in
\Delta_{R},\;\; |c-c'|<\frac{1}{2}.
$$
\end{theorem}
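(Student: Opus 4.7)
The plan is to reduce to Lemma \ref{Zygmund} by a simple rescaling. Lemma \ref{Zygmund} already gives the desired $|\epsilon \log \epsilon|$ estimate but only under the hypothesis that the support of $f$ sits inside the unit disk $\Delta$. In the theorem the support is merely compact, so there exists some $S \geq 1$ with $\mathrm{supp}(f) \subset \Delta_S$, and the goal is to transfer the unit-disk estimate to this larger support by a change of variable.

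Concretely, I set $g(\zeta) = f(S\zeta)$, which lies in $\mathcal{L}^{\infty}$ with $\|g\|_\infty = \|f\|_\infty$ and with $\mathrm{supp}(g) \subset \Delta$. A direct substitution $\zeta = S\zeta'$ in the defining integral
\[
\mathcal{P}f(c) = -\frac{1}{\pi}\int\!\!\int_{\mathbb C}\frac{f(\zeta)}{\zeta - c}\,d\xi\,d\eta
\]
produces the scaling identity $\mathcal{P}f(c) = S\,\mathcal{P}g(c/S)$. Thus for any $c,c' \in \Delta_R$ one has
\[
\mathcal{P}f(c) - \mathcal{P}f(c') = S\bigl(\mathcal{P}g(c/S) - \mathcal{P}g(c'/S)\bigr),
\]
and the points $c/S$, $c'/S$ lie in $\Delta_{R/S}$.

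Next I apply Lemma \ref{Zygmund} to $g$ with the role of the radius played by $R/S$. Since $S \geq 1$, the hypothesis $|c-c'| < 1/2$ forces $|c/S - c'/S| < 1/2$, so the lemma yields a constant $B$ (depending on $R/S$, hence on $R$ and on the support radius $S$) with
\[
|\mathcal{P}g(c/S) - \mathcal{P}g(c'/S)| \leq B\,\|g\|_\infty\,\frac{|c-c'|}{S}\,\log\frac{S}{|c-c'|}.
\]
Multiplying by $S$ and using $\|g\|_\infty = \|f\|_\infty$ gives a bound of the form $B\,\|f\|_\infty\,|c-c'|\,\log(S/|c-c'|)$.

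Finally, I need to replace $\log(S/|c-c'|) = \log S - \log|c-c'|$ by a constant multiple of $\log(1/|c-c'|)$. Because $|c-c'| < 1/2$, one has $-\log|c-c'| > \log 2 > 0$, so $\log S - \log|c-c'| \leq (1 + \log S / \log 2)\,\log(1/|c-c'|)$. Absorbing this factor into $B$ yields the constant $C$ claimed in the theorem. The argument is essentially bookkeeping; there is no serious obstacle, since the scaling invariance of $\mathcal P$ reduces the entire statement to the already-established unit disk case.
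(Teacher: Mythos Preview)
Your proposal is correct and follows essentially the same approach as the paper: rescale by the support radius to reduce to Lemma~\ref{Zygmund}, apply that lemma, and then absorb the extra $\log S$ term using $|c-c'|<1/2$. Your write-up is in fact slightly more careful than the paper's in that you explicitly take $S\ge 1$ so that the hypothesis $|c/S-c'/S|<1/2$ of Lemma~\ref{Zygmund} is guaranteed, and you note that the resulting constant depends on the support radius as well as on $R$.
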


\begin{proof}
Suppose the compact support of $f$ is contained in the disk
$\Delta_{R_{0}}$. Then $g(c) =f(R_{0}c)$ has the compact support
which is contained in the unit disk $\Delta$.

Since
$$
{\mathcal P}g(c) = -\frac{1}{\pi} \int\int_{{\mathbb C}}
\frac{g(\zeta)}{\zeta-c} d\xi  d\eta = -\frac{1}{\pi}
\int\int_{{\mathbb C}} \frac{f(R_{0}\zeta)}{\zeta-c} d\xi  d\eta
=\frac{1}{R_{0}}
 {\mathcal P} f (R_{0}c).
$$
This implies that
$$
{\mathcal P}f(c) = R_{0} {\mathcal P} g\Big( \frac{c}{R_{0}}\Big).
$$
Thus
$$
|{\mathcal P}f(c)-{\mathcal P}f(c')| = R_{0} |{\mathcal P}
g(\frac{c}{R_{0}})-{\mathcal P}g(\frac{c'}{R_{0}})|
$$
$$
\leq R_{0} B ||f||_{\infty} \Big( -\Big|
\frac{c}{R_{0}}-\frac{c'}{R_{0}}\Big| \log
\Big|\frac{c}{R_{0}}-\frac{c'}{R_{0}}\Big|\Big)
$$
$$
= B ||f||_{\infty} \Big(-|c-c'| (\log |c-c'| -\log R_{0})\Big)
$$
$$
= -|c-c'| \log |c-c'| B ||f||_{\infty} \Big( 1- \frac{\log
R_{0}}{\log |c-c'|}\Big)
$$
$$
\leq C ||f||_{\infty} ( -|c-c'| \log |c-c'|)
$$
where
$$
C= B (1+ \frac{\log R_{0}}{\log 2}).
$$
\end{proof}

\section{Extensions of holomorphic motions for $0<r<1$.}

As an application of the modulus of continuity for the ${\mathcal
P}$-operator, we first prove, for any $r$ with $0<r<1,$ that for
any holomorphic motion of a set $E$ parameterized by $\Delta,$
there is an extension to $\Delta_r \times \C.$  We take the idea
of the proof from the recent papers of Chirka~\cite{Chirka} and
Chirka and Rosay,~\cite{ChirkaRosay}.

\vspace*{10pt}

\begin{theorem}~\label{ch} Suppose $E$ is a subset of $\C$
consisting of finite number of points. Suppose $h(c,z):
\Delta\times E\to \C$ is a holomorphic motion. Then for every
$0<r<1$, there is a holomorphic motion $H_{r}(c,z):
\Delta_{r}\times \C\to \C$ which extends $h(c,z): \Delta_{r}\times
E\to\C$.
\end{theorem}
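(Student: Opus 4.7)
The plan is to follow Chirka and apply Schauder's fixed-point theorem to a compact convex set of Beltrami coefficients, using the regularity provided by Theorem~\ref{Zygmundth} to get compactness.

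First I would normalize. Using the M\"obius transformations $B_c$ from the introduction, precompose $h$ so that $\{0,1,\infty\}\subset E$ and $h$ fixes these three points. Write $E=\{0,1,\infty,z_4,\dots,z_n\}$ and $w_i(c)=h(c,z_i)$, which are holomorphic on $\Delta$ with $w_i(0)=z_i$. Fix $R>0$ large enough that all $z_i$ and all values $w_i(c)$ for $c\in\overline{\Delta_r}$ lie well inside $\Delta_R$.

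Next I would set up the ansatz. For a Beltrami coefficient $\mu\in{\mathcal L}^\infty(\CP)$ with $\|\mu\|_\infty<1$ and support in $\overline{\Delta_R}$, let $f^\mu$ denote the unique normalized solution of $\partial_{\bar z}f=\mu\,\partial_z f$ fixing $0,1,\infty$. Standard Ahlfors--Bers theory gives the representation $f^\mu(z)=z+{\mathcal P}(\mu\cdot\partial_z f^\mu)(z)$ and shows that $\mu\mapsto f^\mu$ is holomorphic in the appropriate sense. I seek the extension in the form $H_r(c,z)=f^{\mu(c,\cdot)}(z)$, where $\mu(c,\cdot)$ is holomorphic in $c$ and solves the $n-3$ interpolation constraints $f^{\mu(c,\cdot)}(z_i)=w_i(c)$ for $i=4,\dots,n$.

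Then I would apply Schauder. Let $K_r$ be the convex set of maps $\mu:\overline{\Delta_r}\to{\mathcal L}^\infty(\CP)$ that are continuous, holomorphic on $\Delta_r$, supported in $\overline{\Delta_R}$, and bounded by some fixed $k\in(0,1)$. Define an operator $\Phi:K_r\to K_r$ that, given $\mu$, adjusts it by a correction $\sum_{i=4}^{n}\alpha_i(c;\mu)\,\chi_i(z)$, where the $\chi_i$ are fixed bump functions localized near the points $z_i$ and the holomorphic scalars $\alpha_i(c;\mu)$ are determined by a nonsingular $(n-3)\times(n-3)$ linear system that cancels the interpolation errors $e_i(c)=w_i(c)-f^{\mu(c,\cdot)}(z_i)$ to leading order. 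Theorem~\ref{Zygmundth} applied in the $z$-variable, together with Cauchy's integral formula in the $c$-variable, gives a uniform $|\epsilon\log\epsilon|$ modulus of continuity on $\Phi(K_r)$, so Arzel\`a--Ascoli yields relative compactness in the compact-open topology; continuity of $\Phi$ follows from the holomorphic dependence of $f^\mu$ on $\mu$. Schauder's theorem then produces a fixed point $\mu^*\in K_r$, and $H_r(c,z)=f^{\mu^*(c,\cdot)}(z)$ is the desired extension of $h$ to $\Delta_r\times\C$.

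The central difficulty, and the point where $r<1$ is essential, is showing that $\Phi$ actually sends $K_r$ into itself, specifically that the correction keeps $\|\Phi(\mu)(c,\cdot)\|_\infty\le k<1$. Here Schwarz's lemma applied to the errors $e_i(c)$ (which vanish at $c=0$ because $f^{\mu(0,\cdot)}=\mathrm{id}$ when $\mu(0,\cdot)=0$, and more generally because the system is arranged so that all errors vanish at $c=0$) gives a bound proportional to $|c|\le r$, so for $r$ strictly less than $1$ one can choose $k\in(0,1)$ for which the self-mapping property holds. At $r=1$ this estimate degenerates, which is why the restriction appears and why the deeper machinery of the subsequent sections is needed to reach the full $r=1$ statement.
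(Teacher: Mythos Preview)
Your approach is quite different from the paper's, and as written the Schauder step does not go through. The paper (following Chirka) works entirely in the $c$-variable, not the $z$-variable: after the change of parameter $c\mapsto r/c$ (this is precisely where $r<1$ enters, so that the new parameter domain is a neighborhood of $\overline{\Delta}^c$), each trajectory $f_i(c)=h(c,z_i)$ is extended by reflection across $\partial\Delta$ to a function on all of $\C$, and one sets $\Phi(c,w)=\sum_i\lambda(|w-f_i(c)|)\,\partial_{\bar c}f_i(c)$. The extension is then produced one point $z$ at a time as the unique fixed point of $f\mapsto z+\mathcal{K}f$ on the Banach space $\mathcal{C}(\CP)$, where $\mathcal{K}f(c)=\mathcal{P}\bigl(\Phi(\cdot,f(\cdot))\bigr)(c)$ and $\mathcal{P}$ is the Cauchy transform in $c$; compactness of $\mathcal{K}$ comes straight from Lemma~\ref{holder}, and an integrating-factor trick gives uniqueness and injectivity in $z$. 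No Beltrami equation or Ahlfors--Bers theory is used at this stage.

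The fatal problem in your outline is compactness. Your operator has the form $\Phi(\mu)=\mu+\sum_i\alpha_i(c;\mu)\chi_i$, i.e.\ the identity plus a correction taking values, for each $c$, in the fixed finite-dimensional span of the $\chi_i$. Such a map never carries the ball $K_r$ to a relatively compact set: if $(\mu_n)$ is a sequence in $K_r$ with no convergent subsequence, the corrections have a convergent subsequence (finite-dimensional range), so $\Phi(\mu_n)$ has no convergent subsequence either. Theorem~\ref{Zygmundth} does not help here; it gives an $|\epsilon\log\epsilon|$ modulus for $\mathcal{P}g$ as a function of its argument, which in your set-up is a statement about $z\mapsto f^{\mu}(z)$, not about the Beltrami coefficient $\mu$ itself, and Arzel\`a--Ascoli for maps into $\mathcal{L}^\infty$ still requires pointwise relative compactness in $\mathcal{L}^\infty$, which fails. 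Hence Schauder cannot be invoked. A secondary gap: your Schwarz-lemma bound on the errors $e_i$ needs $e_i(0)=0$ for every $\mu\in K_r$, so at a minimum the constraint $\mu(0,\cdot)=0$ must be built into $K_r$; as stated, a generic $\mu\in K_r$ gives $f^{\mu(0,\cdot)}\neq\mathrm{id}$ and the argument collapses.
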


Without loss of generality, suppose
$$
E=\{ z_{0}=0, z_{1}=1, z_{\infty} =\infty, z_{2}, \cdots, z_{n}\}
$$ is a subset of $n+2>3$ points in the Riemann sphere $\C$.
Let $\Delta^{c}$ be the complement of the unit disk in the Riemann
sphere $\C,$ $U$ be a neighborhood of $\Delta^{c}$ in  $\C$ and
suppose
$$
h(c, z): U\times E\to \C
$$
is a holomorphic motion of $E$ parametrized by $U$ and with base
point $\infty.$ Define
$$
f_{i}(c)=h(c,z_i): U \to \C
$$
for $i=0,1,2, \cdots, n, \infty$. We assume the motion is
normalized so
$$
f_{0} (c)=0, \quad f_{1}(c)=1, \quad \hbox{and} \quad
f_{\infty}(c) =\infty, \quad \forall\; c\in U.
$$
Then we have that
\begin{itemize}
\item[a)] $f_{i}(\infty)=z_{i}$,\; $i=2, \cdots, n$;
\item[b)] for any $i=2, \cdots, n$, $f_{i}(c)$ is holomorphic on $U$;
\item[c)] for any fixed $c\in U$, $f_{i}(c)\neq f_{j}(c)$ and
$f_{i}(c)\neq 0,1$, and $\infty$ for $2\leq i\neq j\leq n$.
\end{itemize}

Since $\Delta^{c}$ is compact, $f_{i}(c)$ is a bounded function on
$\Delta^{c}$ for every $2\leq i\leq n$ and so there is a constant
$C_{4}>0$ such that
$$
|f_{i}(c)|\leq C_{4}, {\rm \ for \ all \ }  c \in \Delta^{c} {\rm
\ and \ all \ } i {\rm \ with \ }  2\leq i\leq n.
$$
Moreover, there is a number $\delta >0$ such that
$$
\mid f_{i}(c)-f_{j}(c) \mid > \delta, {\rm \ for \ all \ } i {\rm
\ and \ } j {\rm \ with \ } 2\leq i\neq j\leq n, {\rm \ and \ for
\ all \ } c \in \Delta^{c}.
$$
We extend the functions $f_{i}(c)$ on $\Delta^{c}$ to continuous
functions on the Riemann sphere $\C$ by defining
$$
f_{i}(c) =f_{i} \Big( \frac{1}{\overline{c}} \Big), {\rm \ for \
all \ } c \in  \overline{\Delta}.
$$
We still have
$$
\mid f_{i}(c)-f_{j}(c) \mid > \delta, {\rm \ for \ all \ } i {\rm
\ and \ } j {\rm \ with \ } 2\leq i\neq j\leq n {\rm \ and \ for \
all \ } c \in \C
$$ and
$$
|f_i(c)| \leq C_4 {\rm \ for \ all \ } i {\rm \ and \ } j {\rm \
with \ } 2\leq i\neq j\leq n {\rm \ and \ for \ all \ } c \in \C.
$$
Since $f_{i}(c)$ is holomorphic in $\Delta^{c}$ and
$f_{i}(\infty)=z_{i}$, the series expansion of $f_{i}(c)$ at
$\infty$ is
$$
f_{i}(c)=z_{i}+\frac{a_{1}}{c} +\frac{a_{2}}{c^{2}}+ \cdots
+\frac{a_{n}}{c^{n}}+\cdots, \quad c \in \Delta^{c}, \quad
\forall\; c\in \Delta^{c}.
$$
This implies that
$$
f_{i}(c)=f_{i}\Big( \frac{1}{\overline{c}}\Big)
=z_{i}+a_{1}\overline{c} +a_{2}(\overline{c})^{2}+\cdots a_{n}
(\overline{c})^{n}+\cdots , \quad \forall \; c\in
\overline{\Delta}.
$$
We have that
$$
\frac{\partial f_{i}}{\partial \overline{c}} (c) = a_{1} + 2 a_{2}
\overline{c}+ \cdots +n a_{n} (\overline{c})^{n-1}+\cdots
$$
exists at $c=0$ and  is a continuous function on
$\overline{\Delta}$. Furthermore, $(\partial f_{i}/\partial
\overline{c}) (c)=0$ for $c\in \Delta^{c}$. Since
$\overline{\Delta}$ is compact, there is a constant $C_{5}>0$ such
that
$$
\mid \frac{\partial f_{i}}{\partial \overline{c}}(c) \mid \leq
C_{5}, \quad \forall\; c\in \C, \quad \forall\; 2\leq i\leq n.
$$
%Note that $(\partial f_{i}/\partial \overline{c})(c)$ may have a
%first-type discontinuity point on the boundary $\partial \Delta$.

Pick a $C^{\infty}$ function $0\leq \lambda(x)\leq 1$ on ${\mathbb
R}^{+}=\{ x\geq 0\}$ such that $\lambda(0)=1$ and $\lambda(x)=0$
for $x\geq \delta/2$. Define
\begin{equation}\label{magicsum}
\Phi(c,w)=\sum^{n}_{i=2} \lambda(\mid
w-f_{i}(c)\mid)\frac{\partial f_{i}}{\partial \overline{c}}(c),
\quad (c,w)\in \C\times {\mathbb C}.
\end{equation}

\vspace*{10pt}
\begin{lemma}\label{items}
The function $\Phi(c,w)$ has the following properties:
\begin{itemize}
\item[i)] only one term in the sum (\ref{magicsum}) defining
$\Phi(c,w)$ can be nonzero,
%exists only one $2\leq i\leq n$ such that $$
%\Phi(c,w)=\lambda(\mid w-f_{i}(c)\mid)\frac{\partial
%f_{i}}{\partial \overline{c}}(c); $$
\item[ii)] $\Phi(c,w)$ is uniformly bounded by $C_{5}$ on
$\C\times {\mathbb C}$, \item[iii)] $\Phi(c,w)=0$ for $(c, w)\in
\Big((\overline{\Delta})^{c} \times {\mathbb C}\Big) \cup \Big(
\C\times (\overline{\Delta}_{R})^{c}\Big)$ where
$R=C_{4}+\delta/2$, \item[iv)] $\Phi(c,w)$ is a Lipschitz function
in $w$-variable with a Lipschitz constant $L$ independent of
$c\in\hat{\mathbb C}$.
\end{itemize}
\end{lemma}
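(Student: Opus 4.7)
The plan is to verify each of the four items directly from the construction of $\Phi$, exploiting the uniform separation $|f_i(c)-f_j(c)|>\delta$, the uniform bounds on $|f_i|$ and $|\partial f_i/\partial\overline c|$, and the support and smoothness of the cutoff $\lambda$.

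For (i), I would fix $(c,w)$ and suppose, for contradiction, that two distinct indices $i\neq j$ both give nonzero summands. Since $\lambda$ vanishes on $[\delta/2,\infty)$, this forces $|w-f_i(c)|<\delta/2$ and $|w-f_j(c)|<\delta/2$, and the triangle inequality then gives $|f_i(c)-f_j(c)|<\delta$, contradicting the separation estimate established before the lemma. Item (ii) then falls out immediately: at any $(c,w)$ at most one summand survives, and that surviving summand has modulus at most $\lambda(\,\cdot\,)\cdot C_5\le C_5$ since $0\le\lambda\le 1$ and $|\partial f_i/\partial\overline c|\le C_5$ on all of $\C$.

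For (iii), I would split into the two regions appearing in the statement. On $(\overline\Delta)^{c}\times\CP$ the functions $f_i$ are holomorphic in $c$, so $\partial f_i/\partial\overline c\equiv 0$ there and every summand vanishes. On $\C\times(\overline\Delta_R)^{c}$ I would use $|f_i(c)|\le C_4$ uniformly to estimate
$$
|w-f_i(c)|\ge |w|-|f_i(c)|>R-C_4=\delta/2,
$$
so $\lambda(|w-f_i(c)|)=0$ for every $i$, and therefore $\Phi(c,w)=0$.

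Finally, for (iv), since $\lambda$ is $C^\infty$ with compact support in $[0,\delta/2]$ it is globally Lipschitz on $\mathbb R^{+}$ with some constant $L_\lambda$; and the map $w\mapsto|w-f_i(c)|$ is $1$-Lipschitz. Combining with the uniform bound $|\partial f_i/\partial\overline c|\le C_5$ yields, for any $w_1,w_2\in\mathbb C$ and any $c\in\C$,
$$
|\Phi(c,w_1)-\Phi(c,w_2)|\le\sum_{i=2}^{n} L_\lambda|w_1-w_2|\cdot C_5=(n-1)L_\lambda C_5\,|w_1-w_2|,
$$
so $L=(n-1)L_\lambda C_5$ works and is independent of $c$. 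The only mild subtlety I anticipate is bookkeeping the fact that the constants $C_4,C_5,\delta,L_\lambda$ are all intrinsic to the finite set $E$ and the chosen $\lambda$, hence genuinely uniform; the proof is otherwise a straightforward unpacking of definitions.
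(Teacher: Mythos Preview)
Your proof is correct and follows essentially the same approach as the paper for items (i)--(iii). The one noteworthy difference is in item (iv): you bound the difference term-by-term over all $n-1$ summands, obtaining $L=(n-1)L_\lambda C_5$, whereas the paper observes that in the difference $\Phi(c,w)-\Phi(c,w')$ at most two of the summands can be nonzero (the one indexed by the $i$ with $|w-f_i(c)|<\delta/2$, and the possibly different $j$ with $|w'-f_j(c)|<\delta/2$), yielding the sharper constant $L=2L_\lambda C_5$ independent of $n$. Since $n$ is fixed in the ambient argument, your cruder constant suffices for everything that follows; the paper's refinement is cosmetic here but would matter if one wanted estimates uniform in the cardinality of $E$.
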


\begin{proof}
Item i) follows because if a point $w$ is within distance
$\delta/2$ of one of the values $f_{i}(c)$ it must be at distance
greater than $\delta/2$ from any of the other values $f_{j}(c).$
Item ii) follows from item i) because there can be only one term
in (\ref{magicsum}) which is nonzero and that term is bounded by
the bound on $\frac{\partial f_j(c)}{\partial \overline{c}}.$ Item
iii) follows because if $c\in (\overline{\Delta})^{c}$, then
$(\partial f_{i}/\partial \overline{c})(c)=0$, and if $w\in
(\overline{\Delta}_R)^{c}$, $\Phi(c,w)=0$. To prove  item iv), we
note that there is a constant $C_{6}>0$ such that $|\lambda
(x)-\lambda (x')|\leq C_{6}|x-x'|$. Since $|(\partial
f_{i}/\partial \overline{c})(c)|\leq C_{5}$,
\begin{equation}\label{Lip}
|\Phi (c, w) -\Phi(c,w')| \leq C_{6} C_{5} \sum_{i=2}^{n} \Big|\
|w-f_{i}(c)|-|w'-f_{i}(c)|\Big|.
\end{equation}
Since only one of the terms in the sum (\ref{magicsum}) for
$\Phi(c,w)$ is nonzero and possibly a different term is non-zero
in the sum for $\Phi(c,w'),$ we obtain
$$
 |\Phi (c, w) -\Phi(c,w')| \leq  2 C_{6} C_{5} |w-w'|.
$$
Thus $L=2 C_{5}C_{6}$ is a Lipschitz constant independent of $c\in
\hat{\mathbb C}$.
\end{proof}

Since $\Phi(c,f(c))$ is an ${\mathcal L}^{\infty}$ function with a
compact support in $\overline{\Delta}$ for any $f\in {\mathcal
C}$, we can define an operator ${\mathcal Q}$ mapping functions in
${\mathcal C}$ to functions in ${\mathcal L}^{\infty}$ with
compact support by
$$
{\mathcal Q}f (c)= \Phi(c,f(c)), \quad f(c) \in {\mathcal C}.
$$
Since $\Phi (c,w)$ is Lipschitz in the $w$ variable with a
Lipschitz constant $L$ independent of $c\in\C$, we have $$
|{\mathcal Q}f (c) - {\mathcal Q}g (c) | =  |\Phi(c,f(c))-\Phi (c,
g(c))| \leq L |f(c)-g(c)|.
$$
Thus
$$
||{\mathcal Q}f -{\mathcal Q}g||_{\infty} \leq L ||f-g||
$$
and ${\mathcal Q}: {\mathcal C} \to {\mathcal L}^{\infty}$ is a
continuous operator.

From Lemma~\ref{holder},
$$
||{\mathcal P}f||\leq A_{1} ||f||_{\infty}
$$
for any $f\in {\mathcal L}^{\infty}$ whose compact support is
contained in $\Delta$, and so the composition ${\mathcal
K}={\mathcal P}\circ {\mathcal Q}$, where
$$
{\mathcal K}f(c)=-\frac{1}{\pi}\int\int_{\mathbb
C}\frac{\Phi(\zeta,f(\zeta))}{\zeta-c}d\xi  d\eta, \quad \zeta
=\xi +i\eta,
$$
is  a continuous operator from  ${\mathcal C}$ into itself.

\vspace*{10pt}
\begin{lemma}~\label{koperator1}
There is a constant $D>0$ such that
$$
||{\mathcal K}f || \leq D, \quad \forall\; f\in {\mathcal C};
$$
\end{lemma}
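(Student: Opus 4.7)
The plan is to unwind the definition $\mathcal{K} = \mathcal{P} \circ \mathcal{Q}$ and show that the input to $\mathcal{P}$ is uniformly bounded in $\mathcal{L}^\infty$ with uniformly bounded support, so that the first inequality of Lemma~\ref{holder} yields the desired uniform bound on $\mathcal{K}f$.

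First I would observe that, by item ii) of Lemma~\ref{items}, the function $\Phi(c,w)$ satisfies $|\Phi(c,w)| \leq C_5$ for every $(c,w) \in \C \times \mathbb{C}$. Consequently, for any $f \in \mathcal{C}$,
$$
\|\mathcal{Q}f\|_\infty = \operatorname{ess\,sup}_{c \in \mathbb{C}} |\Phi(c,f(c))| \leq C_5,
$$
with a bound independent of $f$.

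Next, by item iii) of Lemma~\ref{items}, $\Phi(c,w) = 0$ whenever $c \in (\overline{\Delta})^c$. Hence the support of $\mathcal{Q}f$ is contained in $\overline{\Delta}$ regardless of $f$. Thus $\mathcal{Q}f \in \mathcal{L}^\infty$ with compact support contained in $\Delta_R$ for any $R>1$, so Lemma~\ref{holder} applies with, say, $R=1$ (or by a trivial enlargement) and gives a constant $A_1$ with
$$
\|\mathcal{P}(\mathcal{Q}f)\| \leq A_1 \|\mathcal{Q}f\|_\infty.
$$

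Combining these two estimates yields
$$
\|\mathcal{K}f\| = \|\mathcal{P}(\mathcal{Q}f)\| \leq A_1 \cdot C_5 =: D,
$$
uniformly in $f \in \mathcal{C}$, which is exactly the statement of the lemma. There is no real obstacle here: the work was already done in establishing the uniform bound on $\Phi$ in Lemma~\ref{items} and in the operator norm bound for $\mathcal{P}$ on compactly supported $\mathcal{L}^\infty$ functions in Lemma~\ref{holder}. This lemma is simply the composition of those two facts.
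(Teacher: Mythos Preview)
Your proof is correct and follows essentially the same approach as the paper: both use that $\Phi$ is bounded by $C_5$ (Lemma~\ref{items}(ii)) and supported in $\overline{\Delta}$ in the $c$-variable (Lemma~\ref{items}(iii)), then bound $\mathcal{P}$ on such functions. The only cosmetic difference is that the paper writes out the integral estimate directly, obtaining $D = 2C_5$, whereas you invoke the first inequality of Lemma~\ref{holder} to get $D = A_1 C_5$; these are the same computation.
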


\begin{proof}
Since $\Phi (c, w)=0$ for $c\in \Delta^{c}$ and since $\Phi (c,
w)$ is bounded by $C_{5}$, we have that
$$
|{\mathcal K}f(c)|= \Big|
\frac{1}{\pi}\int\int_{\C}\frac{\Phi(\zeta,f(\zeta))}{\zeta-c} \;
d\xi  d\eta\Big|= \Big| \frac{1}{\pi}\int\int_{\Delta}
\frac{\Phi(\zeta,f(\zeta))}{\zeta-c} \; d\xi d\eta \Big|
$$
$$
\leq  \frac{1}{\pi}\int\int_{\Delta}
\frac{|\Phi(\zeta,f(\zeta))|}{|\zeta-c|} \; d\xi d\eta
$$
$$
\leq \frac{C_{5}}{\pi}\int\int_{\Delta} \frac{1}{|\zeta-c|} \;
d\xi d\eta \leq 2C_{5}=D
$$
where $\zeta=\xi +i\eta$.
\end{proof}

\vspace{10pt}
\begin{lemma}~\label{holderk}
Suppose $p>2$ and $q$ is the dual number between $1$ and $2$
satisfying
$$
\frac{1}{p}+\frac{1}{q} =1.
$$
Then for any $f\in {\mathcal C}$, ${\mathcal K}f$ is
$\alpha$-H\"older continuous for
$$
0<\alpha =\frac{2}{q} -1<1
$$
with a H\"older constant $H= A_{1} C_{5}$ independent of $f$.
\end{lemma}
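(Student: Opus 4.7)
The plan is to reduce directly to Lemma~\ref{holder} applied to the $\mathcal L^{\infty}$ function $\mathcal Q f$, using the uniform bounds on $\Phi$ that were already established. There is essentially no new ingredient needed.

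First I would record the two facts about $\mathcal Q f$ that make the reduction possible. By item iii) of Lemma~\ref{items}, $\Phi(c,w)=0$ whenever $c\in(\overline{\Delta})^{c}$, so $\mathcal Q f(c)=\Phi(c,f(c))$ has compact support contained in $\overline{\Delta}$, regardless of $f\in\mathcal C$. By item ii), $|\Phi(c,w)|\le C_{5}$ everywhere, so $\|\mathcal Q f\|_{\infty}\le C_{5}$, again uniformly in $f$. Thus $\mathcal Q f$ is a candidate to which Lemma~\ref{holder} applies with $R=1$.

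Next I would apply Lemma~\ref{holder} to the function $\mathcal Q f$ with $R=1$. This yields, for the exponent $1-2/p$ and with the constant $A_{1}$ produced by that lemma,
$$
|\mathcal P(\mathcal Q f)(c)-\mathcal P(\mathcal Q f)(c')|\le A_{1}\,\|\mathcal Q f\|_{\infty}\,|c-c'|^{1-2/p}\le A_{1} C_{5}\,|c-c'|^{1-2/p}.
$$
Since $\mathcal K f=\mathcal P\circ\mathcal Q f$ by definition, this is exactly the desired inequality once we observe that the relation $1/p+1/q=1$ gives $2/q-1=1-2/p$, so the stated exponent $\alpha=2/q-1$ matches the exponent produced by Lemma~\ref{holder}. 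The Hölder constant is then $H=A_{1}C_{5}$, manifestly independent of $f$.

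There is no real obstacle: the nontrivial work was already done in Lemma~\ref{holder} (controlling $\mathcal P$ on $\mathcal L^{\infty}$ functions with compact support) and in Lemma~\ref{items} (producing the uniform $\mathcal L^{\infty}$ bound and compact support for $\mathcal Q f$). The only thing to be careful about is to use $R=1$ in Lemma~\ref{holder}, which is legitimate precisely because the support of $\mathcal Q f$ sits inside $\overline{\Delta}$, and to identify $1-2/p$ with $2/q-1$ from the conjugacy relation.
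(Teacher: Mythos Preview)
Your proposal is correct and follows essentially the same approach as the paper: apply Lemma~\ref{holder} with $R=1$ to $\mathcal Q f$, use $\|\mathcal Q f\|_\infty\le C_5$ from Lemma~\ref{items}, and read off the H\"older estimate with constant $A_1 C_5$. The paper's proof is the same two-line computation, only with the justification for compact support and the $C_5$ bound left implicit from the preceding discussion.
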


\begin{proof} From Lemma~\ref{holder},
$$
|{\mathcal K}f(c) -{\mathcal K}f(c')| = |{\mathcal P} ({\mathcal
Q}f)(c) -{\mathcal P} ({\mathcal Q}f)(c')|
$$
$$
\leq A_{1} ||{\mathcal Q}f||_{\infty} |c- c'|^{\alpha} \leq
A_{1}C_{5} |c- c'|^{\alpha} =H |c- c'|^{\alpha}.
$$
\end{proof}

The above two lemmas imply that ${\mathcal K}: {\mathcal C}\to
{\mathcal C}$ is a continuous compact operator. Now for any $z\in
{\mathbb C}$, let
$$
{\mathcal B}_{z} = \{ f\in {\mathcal C} \;|\; ||f|| \leq |z|+D\}.
$$
It is a bounded convex subset in ${\mathcal C}$. The continuous
compact operator $z+{\mathcal K}$ maps ${\mathcal B}_{z}$ into
itself. From the Schauder fixed point
theorem~\cite{CourantHilbert}, $z+{\mathcal K}$ has a fixed point
in ${\mathcal B}_{z}$. That is, there is a $f_{z} \in {\mathcal
B}_{z}$ such that
$$
f_{z}(c) =z+ {\mathcal K}f_{z}(c), \quad \forall\; c\in {\mathbb
C}.
$$
Since ${\mathcal Q}f(c)$ has a compact support in
$\overline{\Delta}$ for any $f\in {\mathcal C}$, ${\mathcal
K}f_{z}(c) \to 0$ as $c\to \infty$. So $f_{z}$ can be extended
continuously to $\infty$ such that $f_{z}(\infty)=z$.

\vspace*{10pt}
\begin{lemma}~\label{uniqueness}
The solution $f_{z}(c)$ is the unique fixed point of the operator
$z+{\mathcal K}$.
\end{lemma}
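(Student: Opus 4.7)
\textbf{Proof plan for Lemma \ref{uniqueness}.} Suppose $f_z$ and $g_z$ are both fixed points of $z+\mathcal{K}$ in $\mathcal{C}$, and set $u=f_z-g_z$. Then $u$ is bounded and continuous on $\mathbb{C}$, and since $\mathcal{K}f_z(c),\mathcal{K}g_z(c)\to 0$ as $c\to\infty$, we have $u(c)\to 0$ at $\infty$. I would reduce the problem to showing $u\equiv 0$ via a Carleman-type similarity principle.

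First I would take the generalized $\bar{c}$-derivative of the fixed-point equation $f_z=z+\mathcal{P}(\Phi(\cdot,f_z(\cdot)))$. By equation (\ref{deriv}) applied to the $\mathcal{L}^\infty$ function $\Phi(c,f_z(c))$ (which has compact support in $\overline{\Delta}$ by item iii) of Lemma \ref{items}), we obtain $\partial f_z/\partial\bar{c}=\Phi(c,f_z(c))$ a.e., and likewise for $g_z$. Subtracting,
$$
\frac{\partial u}{\partial\bar{c}}(c)=\Phi(c,f_z(c))-\Phi(c,g_z(c))\quad\text{a.e.}
$$
Define
$$
\mu(c)=\begin{cases}\dfrac{\Phi(c,f_z(c))-\Phi(c,g_z(c))}{u(c)}, & u(c)\neq 0,\\ 0, & u(c)=0.\end{cases}
$$
Item iv) of Lemma \ref{items} gives $|\mu(c)|\leq L$ a.e., and item iii) shows $\mu$ has compact support in $\overline{\Delta}$. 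Thus $\partial u/\partial\bar{c}=\mu(c)u(c)$ in the distributional sense, with $\mu\in\mathcal{L}^\infty$ of compact support.

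Next I would introduce the Cauchy transform $v=\mathcal{P}\mu$. By Lemma \ref{holder}, $v$ is bounded and continuous on $\mathbb{C}$, $v(\infty)=0$, and $\partial v/\partial\bar{c}=\mu$ a.e. Consider $F=e^{-v}u$; it is bounded and continuous, $F(\infty)=0$, and a direct distributional computation yields
$$
\frac{\partial F}{\partial\bar{c}}=e^{-v}\Bigl(\frac{\partial u}{\partial\bar{c}}-u\frac{\partial v}{\partial\bar{c}}\Bigr)=e^{-v}(\mu u-\mu u)=0
$$
a.e. on $\mathbb{C}$. By Weyl's lemma (or the equivalence of generalized and classical derivatives for locally integrable functions with vanishing distributional $\bar{\partial}$), $F$ is an entire holomorphic function. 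Being bounded it is constant by Liouville's theorem, and since $F(\infty)=0$ that constant is $0$. As $e^{-v}$ is nowhere vanishing, we conclude $u\equiv 0$, i.e.\ $f_z=g_z$.

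The only delicate point is the distributional product-rule computation for $\partial F/\partial\bar{c}$: one must verify that $v$ has enough regularity (continuity, with an essentially bounded distributional $\bar\partial$) so that the product rule with the continuous function $u$ of bounded distributional $\bar\partial$ is justified. This is standard in the Ahlfors--Bers framework cited after equation (\ref{deriv}), and in any case can be carried out by approximating $\mu$ by smooth functions $\mu_n$ of compact support, for which all formulas are classical, and passing to the limit using the uniform bounds on $v_n=\mathcal{P}\mu_n$ supplied by Lemma \ref{holder}.
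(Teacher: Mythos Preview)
Your proof is correct and follows essentially the same approach as the paper: the paper sets $\phi=f_z-g_z$, defines $\psi=-\partial_{\bar c}\phi/\phi$ (your $-\mu$), forms $e^{\mathcal{P}\psi}\phi$ (your $e^{-v}u$), and concludes via Liouville exactly as you do. Your extra care in justifying the distributional product rule and invoking Weyl's lemma explicitly is a welcome elaboration of what the paper leaves implicit.
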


\begin{proof}
Suppose $f_{z}(c)$ and $g_{z}(c)$ are two solutions. Take
$$
\phi(c)=f_{z}(c)-g_{z}(c) ={\mathcal K}(f_{z}) (c)- {\mathcal K}
(g_{z})(c).
$$
Then $\phi(c)\to 0$ as $c\to
\infty$. Now
$$
\frac{\partial \phi}{\partial \overline{c}}(c)=\frac{\partial
f_{z}}{\partial \overline{c}}(c)-\frac{\partial g_{z}}{\partial
\overline{c}}(c)=\Phi(c,f_{z}(c))-\Phi(c,g_{z}(c)).
$$
So by Lemma \ref{items}
$$
\frac{\partial \phi}{\partial \overline{c}}(c)=0, \quad \forall\;
c\in \Delta^{c}.
$$

Since $\Phi(c,w)$ is Lipschitz in $w$-variable with a Lipschitz
constant $L$,
$$
| \frac{\partial \phi}{\partial \overline{c}}(c)|=|
\Phi(c,f_{z}(c))-\Phi(c,g_{z}(c))| \leq L |f_{z}(c)-g_{z}(c)|=L
|\phi(c)|.
$$
Assuming that $\phi(c)$ is not equal to zero,  define
$$
\psi (c)=- \frac{\frac{\partial \phi}{\partial
\overline{c}}(c)}{\phi(c)},
$$
and otherwise, define $\psi(c)$ to be equal to zero. Then $\psi
(c)$ is a function in ${\mathcal L}^{\infty}$ with a compact
support in $\overline{\Delta}$. So we have ${\mathcal P}\psi$ in
${\mathcal C}$ such that
$$
\frac{\partial {\mathcal P} \psi}{\partial \overline{c}} (c) =
\psi(c).
$$

Consider $e^{{\mathcal P}\psi} \cdot \phi$. Then
$$
\frac{ \partial (e^{{\mathcal P} \psi }\cdot \phi)}{\partial
\overline{c}}\equiv 0.
$$
This means that $e^{{\mathcal P}\psi }\cdot \phi $ is holomorphic
on the complex plane ${\mathbb C}$.

When $c\longrightarrow \infty$, ${\mathcal P}\psi\longrightarrow
0$ and $\phi(c)\longrightarrow 0$. This implies that $e^{{\mathcal
P} \psi}\cdot \phi$ is bounded on ${\mathbb C}$. So $e^{{\mathcal
P}\psi}\cdot \phi$ is a constant function. But $\phi(\infty)=0$,
so $e^{{\mathcal P}\psi}\cdot \phi\equiv 0$. Thus $\phi(c)\equiv
0$ and $f_{z}(c)=g_{z}(c)$ for all $c\in {\mathbb C}$.
\end{proof}

For $z_{i}\in E$, $2\leq i\leq n$, consider
$$
{\mathcal K} f_{i}(c)=-\frac{1}{\pi} \int\int_{{\mathbb
C}}\frac{\Phi(\zeta,f_{i}(\zeta))}{\zeta-c}d\xi  d\eta,
$$
where $\zeta =\xi +i\eta$. From the definition of $\Phi(c,w)$, we
have that
$$
\Phi(\zeta,f_{i}(\zeta))=\frac{\partial f_{i}}{\partial
\overline{\zeta}} (\zeta).
$$
So
$$
{\mathcal K}f_{i}(c)=-\frac{1}{\pi}\int\int_{\mathbb
C}\frac{\frac{\partial f_{i}}{\partial
\overline{\zeta}}(\zeta)}{\zeta-c}d\xi  d\eta.
$$
This implies that
$$
\frac{\partial {\mathcal
K}f_{i}}{\partial\overline{c}}(c)=\frac{\partial f_{i}}{\partial
\overline{c}}(c)
$$
and that
$$
\frac{\partial (f_{i}- {\mathcal K} f_{i})}{\partial\overline{c}}
(c)\equiv 0.
$$
So $f_{i}(c)-{\mathcal K}f_{i}(c)$ is holomorphic on ${\mathbb
C}$. When $c\longrightarrow \infty$, $f_{i}(c)\longrightarrow
z_{i}$ and ${\mathcal K}f_{i}(c)\longrightarrow 0$. So
$f_{i}(c)-{\mathcal K}f_{i}(c)$ is bounded. Therefore it is a
constant function. We get that
$$
f_{i}(c)=z_{i}+{\mathcal K}f_{i}(c).
$$
Thus from Lemma~\ref{uniqueness}, $f_{i}(c)=f_{z_{i}}(c)$ for all
$c\in \C$.

By defining $H(c, z)=f_{z}(c)$ for $(c, z) \in
\overline{\Delta}^{c}\times {\mathbb C}\setminus \{ 0, 1\}$ and
$H(c,0)=0$ and $H(c, 1)=1$ and $H(c,\infty)=\infty$, we get a map
$$
H(c, z)=f_{z}(c): \overline{\Delta}^{c}\times \C\to \C,
$$
which is an extension of
$$
h(c,z):\overline{\Delta}^{c}\times E\to \C.
$$

\vspace*{10pt}
\begin{lemma}~\label{holmot}
The map
$$ H(c, z)=f_{z}(c): \overline{\Delta}^{c}\times
\C\to \C,
$$
is a holomorphic motion.
\end{lemma}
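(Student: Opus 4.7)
The plan is to verify the three defining properties of a holomorphic motion for $H(c,z)=f_z(c)$ on $\overline{\Delta}^c\times\C$ with base point $\infty$: the normalization at the base point, holomorphy in $c$, and injectivity in $z$ for each fixed $c$.

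For the base point, $H(\infty,z)=z$ follows directly from the construction: the fixed point $f_z=z+\mathcal{K}f_z$ extends continuously to $\infty$ with value $z$ because $\mathcal{K}f_z(c)\to 0$ as $c\to\infty$ (the integrand has compact support in $\overline{\Delta}$). The values $z=0,1,\infty$ are handled separately by definition.

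For holomorphy in $c$, recall that $\mathcal{K}f_z$ satisfies $\partial(\mathcal{K}f_z)/\partial\overline{c}(c)=\Phi(c,f_z(c))$ in the sense of generalized derivatives. By part iii) of Lemma~\ref{items}, $\Phi(c,w)\equiv 0$ for $c\in(\overline{\Delta})^c$. Hence $\partial f_z/\partial\overline{c}=0$ on $(\overline{\Delta})^c$, and together with the continuous extension at $\infty$ this shows that $c\mapsto f_z(c)$ is holomorphic on $\overline{\Delta}^c$.

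The main work is injectivity: I need to show that for each $c\in\overline{\Delta}^c$ and each pair $z\neq z'$ in $\C$, $f_z(c)\neq f_{z'}(c)$. The idea is to reuse the Liouville-type trick of Lemma~\ref{uniqueness}. Set $\phi(c)=f_z(c)-f_{z'}(c)$, so $\phi\in\mathcal{C}$ and $\phi(\infty)=z-z'$. By the Lipschitz property of $\Phi$ in $w$ (item iv of Lemma~\ref{items}),
\[
\Bigl|\frac{\partial\phi}{\partial\overline{c}}(c)\Bigr|=|\Phi(c,f_z(c))-\Phi(c,f_{z'}(c))|\leq L|\phi(c)|,
\]
and this expression vanishes on $(\overline{\Delta})^c$. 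Define
\[
\psi(c)=-\frac{\partial\phi/\partial\overline{c}}{\phi(c)}
\]
where $\phi(c)\neq 0$ and $\psi(c)=0$ otherwise; then $\psi\in\mathcal{L}^\infty$ with compact support in $\overline{\Delta}$ and $\|\psi\|_\infty\leq L$. Consequently $\mathcal{P}\psi\in\mathcal{C}$ with $\partial(\mathcal{P}\psi)/\partial\overline{c}=\psi$, and a short computation gives $\partial(e^{\mathcal{P}\psi}\phi)/\partial\overline{c}\equiv 0$ on $\CP$. Hence $e^{\mathcal{P}\psi}\phi$ is an entire function; it is bounded because $\phi$ is bounded and $\mathcal{P}\psi$ is bounded, so by Liouville it is constant. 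Evaluating at $\infty$ where $\mathcal{P}\psi\to 0$ gives $e^{\mathcal{P}\psi(c)}\phi(c)\equiv z-z'$, so
\[
\phi(c)=(z-z')e^{-\mathcal{P}\psi(c)}
\]
which is nonzero everywhere when $z\neq z'$. Applying this to arbitrary $z\neq z'$ (and separately checking, via $f_{z_0}=0$, $f_{z_1}=1$, that $f_z(c)\neq 0,1$ for $z\neq 0,1$ and that $f_z(c)$ is finite since $\|f_z\|\leq|z|+D$) yields injectivity of $z\mapsto H(c,z)$ on $\C$. The main obstacle is precisely this injectivity argument, and the key point is that the same $e^{\mathcal{P}\psi}$-trick used for uniqueness adapts cleanly once one observes that it shows $\phi$ never vanishes rather than that it vanishes identically.
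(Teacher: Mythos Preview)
Your proof is correct and follows essentially the same approach as the paper's: verify the base-point and holomorphy conditions directly from the fixed-point equation and the vanishing of $\Phi$ on $(\overline{\Delta})^c$, and then establish injectivity via the same $e^{\mathcal{P}\psi}\phi$ Liouville argument used in Lemma~\ref{uniqueness}. The only cosmetic difference is that you evaluate the entire function $e^{\mathcal{P}\psi}\phi$ at $\infty$ to identify the constant as $z-z'$ directly, whereas the paper argues by contradiction assuming $\phi(c_0)=0$; your version is slightly cleaner and also makes the treatment of the special values $0,1,\infty$ more explicit.
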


\begin{proof}
First $H(\infty, z) =f_{z}(\infty)=z$ for all $z\in \C$. From the
fixed point equation
$$
H(c, z) =z +{\mathcal K} H(c, z),
$$
$$
\frac{\partial H(c, z)}{\partial \overline{c}} = \Phi (c, H(c,z)).
$$
Since $\Phi (c, w) =0$ for all $c\in \overline{\Delta}^{c}$,
$$
\frac{\partial H(c, z)}{\partial \overline{c}}=0, \quad \forall\;
c\in \overline{\Delta}^{c}.
$$
Thus, for any fixed $z\in \C$, $H(c,z): \overline{\Delta}^{c}\to
\C$ is holomorphic.

For any two $z \neq z'\in \C$, we claim that $H(c, z)\neq H(c,
z')$ for all $c\in {\mathbb C}$. This  implies that for any fixed
$c\in \overline{\Delta}^{c}$, $H(c,z)$ is an injective map on $z
\in \C$ and that $H(c,z)$ is a holomorphic motion. To prove the
claim take any two $z,z'\in \C$. Assume there is a point $c_{0}
\in \C$ such that $H(c_{0}, z)= H(c_{0}, z')$. If $c_{0}=\infty$,
then $z=z',$ because by assumption the holomorphic motion starts
out at the identity. If $c_{0}\neq \infty,$ then
$$
f_{z}(c_{0})- f_{z'}(c_{0})=(z-z')+{\mathcal K}
f_{z}(c_{0})-{\mathcal K}f_{z'}(c_{0}),
$$
and we can repeat the same argument we have given in Lemma
\ref{uniqueness}.

 Let
$\phi(c)=f_{z}(c)-f_{z'}(c)$. Then $\phi(c_{0})=0$. However,
$$
\frac{\partial \phi}{\partial \overline{c}}(c)=\frac{\partial
f_{z}}{\partial \overline{c}}(c)-\frac{\partial f_{z'}}{\partial
\overline{c}}(c)=\Phi(c,f_{z}(c))-\Phi(c,f_{z'}(c)).
$$
This implies that
$$
\frac{\partial \phi}{\partial \overline{c}}(c)=0
$$
for $c\in \overline{\Delta}^{c}$. Since $\Phi(c,w)$ is Lipschitz
in $w$-variable with a Lipschitz constant $L$,
$$
|\frac{\partial \phi}{\partial \overline{c}}
(c)|=|\Phi(x,f_{z}(c))-\Phi(c,f_{z'}(c))| \leq L |
f_{z}(c)-f_{z'}(c)|=L |\phi(c)|.
$$
If $\phi(c)\neq 0$, define
$$
\psi(c)=-\frac{\frac{\partial
\phi}{\partial \overline{c}}(c)}{\phi(c)},
$$
otherwise, define $\psi(c)=0$. Then
$$
\frac{ \partial e^{{\mathcal P}\psi}\cdot \phi}{\partial
\overline{c}}(c)\equiv 0.
$$
So $e^{{\mathcal P}\psi}\cdot \phi$ is holomorphic on ${\mathbb
C}$. When $c\longrightarrow \infty$, ${\mathcal P}
\psi(c)\longrightarrow 0$ and $\phi(c)\longrightarrow z-z'$. So
$e^{{\mathcal P\psi(c)}}\cdot \phi(c)$ is bounded on ${\mathbb
C}$. This implies that $e^{{\mathcal P}\psi(c)}\cdot \phi(c)$ is a
constant function. Since $\phi(c_{0})=0$, $e^{{\mathcal
P}\psi(c)}\cdot \phi(c)\equiv 0$. So $z=z'$.
\end{proof}

\begin{proof}[Proof of Theorem~\ref{ch}]
Suppose
$$
h(c,z): \Delta\times E\to \C
$$
is a holomorphic motion. For every $0<r<1$, consider
$\alpha_{r}(c) = r/c$. Let $U_{r} = \alpha_{r} (\Delta_{r})
\supset \overline{\Delta}^{c}$. Then
$$
h_{r} (\alpha_{r}^{-1}(c), z): U_{r}\times E\to \C
$$
is a holomorphic motion. From Lemmas~\ref{uniqueness}
and~\ref{holmot}, it can be extended to a holomorphic motion
$$
\tilde{H}_{r}(c, z): \overline{\Delta}^{c}\times \C\to \C.
$$
Then
$$
H_{r}(c,z)= \tilde{H}(\alpha_{r}(c), z): \Delta_{r}\times \C\to \C
$$
is a holomorphic motion which is an extension of $h(c,z)$ on
$\Delta_{r}\times E$.
\end{proof}

\section{Controlling quasiconformal dilatation}

To control the quasiconformal dilatation of a holomorphic motion
there are two methods available. One is given by the Bers-Royden
paper~\cite{BersRoyden} and the other is obtained by combining
methods given in the Bers-Royden paper and in the
Sullivan-Thurston paper~\cite{SullivanThurston}. We discuss the
latter method first.

Consider a set of four points $S=\{z_{1}, z_{2}, z_{3}, z_{4}\}$
in $\C$. These points are distinct if an only if the cross ratio
$$
Cr(S) =
\frac{z_{1}-z_{3}}{z_{1}-z_{4}}:\frac{z_{2}-z_{3}}{z_{2}-z_{4}}=
\frac{z_{1}-z_{3}}{z_{1}-z_{4}}\frac{z_{2}-z_{4}}{z_{2}-z_{3}}
$$
is not equal to  $0, 1,$ or $\infty$. If one of these points is
equal to $\infty$, say $z_{4}$, then this cross ratio becomes a
ratio
$$
Cr(S) = \frac{z_{1}-z_{3}}{z_{2}-z_{3}}.
$$

Suppose $H:\C\mapsto\C$ is an orientation-preserving homeomorphism
such that $H(\infty)=\infty$. Then one of the definitions of
quasiconformality~\cite{LehtoVirtanenbook} of $H$ is that
 $$ \lim_{r\to 0} \sup_{a\in
{\mathbb C}} \frac{\sup_{|z-a|=r}
|H(z)-H(a)|}{\inf_{|z-a|=r}|H(z)-H(a)|} <\infty.
$$
In~\cite{SullivanThurston} Sullivan and Thurston used this
definition to prove the following theorem.

\vspace{10pt}

\begin{theorem}~\label{qc}
Suppose $H(c, z): \Delta\times \C\to \C $ is a normalized
holomorphic motion of $\C$ parametrized by $\Delta$ and with base
point $0$. Then for each $c_{0}\in \Delta$, the map
$h(c_{0},\cdot): \C\mapsto \C$ is quasiconformal.
\end{theorem}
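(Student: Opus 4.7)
The plan is to apply the Schwarz--Pick inequality for the hyperbolic metric on $\CP\setminus\{0,1\}$ to suitable cross-ratio functions built from the motion. Fix $c_0 \in \Delta$; the goal is to produce a constant $K(c_0)$ bounding the circular dilatation
$$
\limsup_{r\to 0}\; \sup_{a\in\C}\; \frac{\sup_{|z-a|=r}|H(c_0,z)-H(c_0,a)|}{\inf_{|z-a|=r}|H(c_0,z)-H(c_0,a)|}.
$$
Since quasiconformality is preserved under pre- and post-composition with M\"obius transformations, it is enough to consider $a\in\CP$ with $a\neq 0,1$, and $r$ small enough that the circle $\{|z-a|=r\}$ avoids $\{0,1,\infty\}$.

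For any two distinct points $z_1,z_2$ on this circle, set
$$
\phi(c)\;=\;\frac{H(c,z_1)-H(c,a)}{H(c,z_2)-H(c,a)}.
$$
Using the normalization $H(c,\infty)=\infty$, this is the cross ratio of the ordered tuple $(H(c,z_1),H(c,z_2),H(c,a),H(c,\infty))$. Injectivity of $H(c,\cdot)$ together with $a,z_1,z_2$ being mutually distinct forces numerator and denominator to be nowhere zero or coincident on $\Delta$, so $\phi:\Delta\to\CP\setminus\{0,1\}$ is holomorphic. At the base point, $\phi(0)=(z_1-a)/(z_2-a)$ has modulus $1$ because $|z_1-a|=|z_2-a|=r$.

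Now the Schwarz--Pick inequality for the hyperbolic metric $\rho$ on $\CP\setminus\{0,1\}$ yields
$$
\rho(\phi(c_0),\phi(0))\;\leq\;\log\frac{1+|c_0|}{1-|c_0|}.
$$
Equivalently, by Montel's theorem with three omitted values, the family $\mathcal F$ of holomorphic maps $\phi:\Delta\to\CP\setminus\{0,1\}$ satisfying $|\phi(0)|=1$ is normal with respect to spherical convergence. The condition $|\phi(0)|=1$ rules out the constant limits $0$ and $\infty$, leaving only holomorphic limits into $\CP\setminus\{0,1\}$ or the constant limit $1$; in every case the value at $c_0$ is finite and nonzero. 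Hence there is a constant $K(c_0)<\infty$ with $1/K(c_0)\leq|\phi(c_0)|\leq K(c_0)$ for every $\phi\in\mathcal F$.

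Finally, pick $z_1,z_2$ on the circle so as to realize the supremum and infimum of $|H(c_0,z)-H(c_0,a)|$ (these are attained by continuity on the compact circle; perturb $r$ slightly if necessary to keep the extremizers away from $\{0,1,\infty\}$). Then $|\phi(c_0)|$ is exactly the circular distortion at $(a,r)$, so it is bounded by $K(c_0)$, uniformly in $a$ and $r$. The Lehto--Virtanen criterion cited above then gives quasiconformality of $H(c_0,\cdot)$. The principal technical obstacle is ensuring uniformity of the bound as $\phi(0)$ ranges over the entire unit circle, especially near the puncture $w=1$ (which corresponds to $z_1$ close to $z_2$); Montel normality resolves this because even the degenerate constant limit $\phi\equiv 1$ produces the harmless value $|\phi(c_0)|=1$.
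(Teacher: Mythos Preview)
Your argument is correct and follows essentially the same route as the paper: both form the cross ratio $\phi(c)=(H(c,z_1)-H(c,a))/(H(c,z_2)-H(c,a))$ as a holomorphic map $\Delta\to\CP\setminus\{0,1\}$, observe $|\phi(0)|=1$, and invoke hyperbolic contraction (Schwarz--Pick) to bound $|\phi(c_0)|$ uniformly, thereby controlling the circular dilatation and applying the Lehto--Virtanen criterion. Your additional normal-families discussion of the behavior near the puncture $w=1$ is a careful elaboration of the step the paper leaves implicit, but the underlying mechanism is identical.
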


\begin{proof}
Let $a\in {\mathbb C}$ be any point. Let $z_{3}=a$. Let $z_{1}$
and $z_{2}$ be two distinct points in ${\mathbb C}$ not equal to
$a$ and $z_{4}=\infty$. Then the cross ratio $Cr(S)
=(z_{1}-z_{3})/(z_{2}-z_{3})$.

Now consider $z_{1}(c) =H(c, z_{1})$, $z_{2}(c) =H(c, z_{2})$,
$z_{3}(c) =H(c, z_{3})$, and $z_{4}(c) =H(c, z_{4})=\infty$ and
$S(c) =\{z_{1}(c), z_{2}(c), z_{3}(c)\}$. The cross ratio
$$
Cr(S(c)) = \frac{z_{1}(c)-z_{3}(c)}{z_{2}(c)-z_{3}(c)}.
$$
Since $H(c,z)$ is a holomorphic motion, $Cr(S(c)): \Delta\mapsto
{\mathbb C}\setminus \{0,1\}$ is a holomorphic function. Then it
decreases the hyperbolic distances from $\rho_{\Delta}$ to
$\rho_{0,1}$. So
$$
\rho_{0,1} (Cr(S(c_{0})), Cr(S)) \leq \rho_{\Delta}(0, c_{0}) =
\log \frac{1+|c_{0}|}{1-|c_{0}|}.
$$
This implies that there is a constant $K=K(c_{0})>0$ such that for
any $|Cr(S)|=1$,
$$
|Cr(S(c_{0})| \leq K.
$$
So we have that
$$
\lim_{r\to 0} \sup_{a\in {\mathbb C}} \frac{\sup_{|z-a|=r}
|H(c_{0}, z)-H(c_{0}, a)|}{\inf_{|z-a|=r}|H(c_{0}, z)-H(c_{0},
a)|} <\infty,
$$
that is, $H(c_{0},z)$ is quasiconformal.
\end{proof}

Suppose ${\mathcal L}^{\infty}(W)$ is the Banach space of all
essentially bounded measurable functions on $W$ equipped with
$\|\cdot\|_{\infty}$-norm.  Bers and Royden~\cite{BersRoyden}
proved the following theorem.

\vspace*{10pt}
\begin{theorem} \label{hol}
Suppose $h(c, z): \Delta\times E\to \hat{\mathbb C} $ is a
holomorphic motion of $E$ parametrized by $\Delta$ and with base
point $0$ and $E$ has nonempty interior $W$, then the Beltrami
coefficient of $h(c, \cdot)|_{W}$ given by
$$
\mu(c,z)=\frac{\partial h(c, z)|_{W}}{\partial
\overline{z}}/\frac{\partial h(c, z)|_{W}}{\partial z}
$$
is a holomorphic function mapping $c\in\Delta$ into the unit ball
of the Banach space ${\mathcal L}^{\infty}(W)$.
\end{theorem}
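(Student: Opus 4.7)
Plan: I would verify the two parts of the conclusion separately. For the first part, that $\mu(c, \cdot)$ lies in the unit ball of $\mathcal{L}^{\infty}(W)$: after extending $h$ to a normalized holomorphic motion of $\C$ via the Bers--Royden extension mentioned in the introduction, Theorem~\ref{qc} together with the Bers--Royden distortion bound shows that $h(c, \cdot)|_W$ is $K(c)$-quasi\-conformal with $K(c) \leq (1 + |c|)/(1 - |c|)$. Consequently $\|\mu(c, \cdot)\|_\infty \leq |c| < 1$, and the map $c \mapsto \mu(c, \cdot)$ into $\mathcal{L}^{\infty}(W)$ is locally bounded in a neighborhood of every $c_0 \in \Delta$.

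For the holomorphicity, I would invoke Dunford's principle: a locally bounded Banach-space-valued function on $\Delta$ is holomorphic if and only if it is weakly holomorphic. Since $C_c^{\infty}(W)$ is dense in $\mathcal{L}^1(W) \subset (\mathcal{L}^{\infty}(W))^*$, weak holomorphicity reduces to showing
$$I_\phi(c) \;=\; \int_W \mu(c, z)\, \phi(z)\, dA(z)$$
is holomorphic in $c \in \Delta$ for each $\phi \in C_c^{\infty}(W)$. By Morera's theorem combined with Fubini, this in turn reduces to verifying that $c \mapsto \mu(c, z)$ is holomorphic on $\Delta$ for almost every $z \in W$.

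To obtain pointwise holomorphicity, I would exploit the two key inputs: that $c \mapsto h(c, z)$ is holomorphic for each fixed $z$, and that $h(c, \cdot)|_W$ is uniformly quasi\-conformal on compact subsets of $\Delta$, hence uniformly bounded in $\mathcal{L}^p_{\mathrm{loc}}(W)$ at the first-derivative level for some $p > 2$. Integration by parts gives, for every $\phi \in C_c^{\infty}(W)$,
$$\int_W h_{\bar z}(c, z)\, \phi(z)\, dA(z) \;=\; -\int_W h(c, z)\, \phi_{\bar z}(z)\, dA(z),$$
and the right side is holomorphic in $c$ by differentiation under the integral (justified by local boundedness of $h(c, z)$ on $K \times \mathrm{supp}\,\phi$ for compact $K \subset \Delta$). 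The analogous identity holds with $z$ in place of $\bar z$. Thus $c \mapsto h_z(c, \cdot)$ and $c \mapsto h_{\bar z}(c, \cdot)$ are holomorphic maps into $\mathcal{L}^p_{\mathrm{loc}}(W)$. Expanding each of these maps in a power series about $c_0$ with $\mathcal{L}^p_{\mathrm{loc}}$-coefficients and passing to pointwise convergence on a full-measure set in $z$ (via Fubini on the countably many coefficient functions used), one obtains that for a.e.\ $z_0 \in W$ the scalar series for $h_z(c, z_0)$ and $h_{\bar z}(c, z_0)$ converge on a neighborhood of $c_0$ and represent holomorphic functions of $c$ there. Since the Jacobian $|h_z|^2 - |h_{\bar z}|^2$ is positive almost everywhere, the quotient $\mu(c, z_0) = h_{\bar z}(c, z_0)/h_z(c, z_0)$ is holomorphic in $c$ on a full-measure set of $z_0$.

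The main obstacle is the management of the exceptional null set in the pointwise reduction, since the set where the coefficient-wise pointwise convergence fails may a priori depend on the expansion centre $c_0$. I would circumvent this by covering $\Delta$ by countably many discs of convergence, taking the union of the associated null sets (still null), and working on its complement, where holomorphicity of $c \mapsto \mu(c, z_0)$ on each disc agrees by the identity principle to give holomorphicity on all of $\Delta$.
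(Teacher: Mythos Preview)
Your approach is sound but differs substantially from the paper's. Both reduce to showing that $c\mapsto\int_W\alpha(z)\mu(c,z)\,dA(z)$ is holomorphic for compactly supported $\alpha\in\mathcal{L}^1(W)$, but the paper then proceeds quite differently: it rewrites $\mu(c,z)$ as the limit $\lim_{\lambda\to 0}(1+i\sigma_c(z,\lambda))/(1-i\sigma_c(z,\lambda))$, where the difference quotient
\[
\sigma_c(z,\lambda)=\frac{h(c,z+i\lambda)-h(c,z)}{h(c,z+\lambda)-h(c,z)}
\]
is, for each fixed $z$ and $\lambda$, a holomorphic function of $c$ taking values in $\hat{\mathbb C}\setminus\{0,1,\infty\}$ with $\sigma_0=i$. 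The Schwarz--Pick lemma then bounds $\sigma_c$ (and hence the integrand) uniformly in $z$ and $\lambda$ for $|c|<r$; dominated convergence plus normality of the approximating holomorphic integrals yields holomorphicity on a small disk, and a translation argument propagates this to all of $\Delta$. This route bypasses distributional derivatives and $\mathcal{L}^p$ theory entirely, relying instead on the cross-ratio structure of the motion itself.

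Two points in your outline need more care. First, the dilatation bound $K\leq(1+|c|)/(1-|c|)$ is, in the paper's development, a \emph{consequence} of the present theorem (it is Theorem~\ref{control}, proved via Schwarz's lemma applied to the holomorphic $c\mapsto\mu(c,\cdot)$), so invoking it beforehand is circular; fortunately your holomorphicity argument only needs locally uniform quasiconformality, which already follows from the proof of Theorem~\ref{qc}. Second, the passage from $\mathcal{L}^p$-valued holomorphicity of $h_z$ and $h_{\bar z}$ to pointwise-a.e.\ holomorphicity of their quotient is not as automatic as you suggest: the Jacobian is positive a.e.\ in $z$ \emph{for each fixed $c$}, but you need that for a.e.\ $z_0$ the denominator $h_z(c,z_0)$ is nonzero for \emph{all} $c$ in the disk. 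This can be repaired (for a.e.\ $z_0$ the function $c\mapsto h_z(c,z_0)$ is holomorphic and not identically zero, so the quotient is at worst meromorphic; the a.e.\ bound $|\mu|<1$ then excludes poles), but your sketch elides it.
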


\begin{proof}
Since the dual of the Banach space ${\mathcal L}^{1}(W)$ of
integrable functions on $W$ is ${\mathcal L}^{\infty}(W),$ to
prove $\mu (c, \cdot)$ is a holomorphic map, it suffices to show
that the function
$$
c \mapsto \Psi(c)=\int\int_{W}\alpha(z)\mu_{c}(z)dxdy
$$
is holomorphic in $\Delta$ for every $\alpha(z)\in {\mathcal
L}^{1}(W)$. Furthermore, it suffices to check this for every
$\alpha(z)\in {\mathcal L}^{1}(W)$ with a compact support in $W$.

Suppose $\alpha(z)\in {\mathcal L}^{1}(W)$ has a compact support
$\hbox{supp}(\alpha)$ in $W$. There is an $\epsilon >0$ such that
the $\epsilon$-neighborhood
$U_{\epsilon}(\hbox{supp}(\alpha))\subset W$. From
Theorem~\ref{qc}, $h(c,\cdot)$ is quasiconformal, it is
differentiable, a.e. in $W$. Thus
$$
\Psi(c)=\int\int_{\hbox{supp}(\alpha)}\alpha(z)\frac{h_{x}(c,z)+ih_{y}(c,z)}{h_{x}(c,z)-ih_{y}(c,z)}dxdy
$$
$$
\Psi(c)=\int\int_{\hbox{supp}(\alpha)}\alpha(z)\frac{1+i\frac{h_{y}(c,z)}{h_{x}(c,z)}}{1-i\frac{h_{y}(c,z)}{h_{x}(c,z)}}dxdy
$$
$$
\Psi(c)=\int\int_{\hbox{supp}(\alpha)}\alpha(z)\lim_{\lambda \to
0}\frac{1+i\sigma_{c}(z,\lambda)}{1-i\sigma_{c}(z,\lambda)}dxdy
$$
where
$$\sigma_{c}(z,\lambda)=\frac{h(c,z+i\lambda)-h(c,z)}{h(c,z+\lambda)-h(c,z)}.$$

For any fixed $z \neq 0,1,\infty$ and $\lambda$ small,
$$
\varrho (c)=\sigma_{c} (z): \Delta \mapsto \C\setminus
\{0,1,\infty\}
$$
is a holomorphic function of $c\in \Delta$. So it decreases the
hyperbolic distances on $\Delta$ and on $\C\setminus
\{0,1,\infty\}$. Since $\varrho(0)=i$, there is a number $0<r<1$
such that for
$$
|\sigma_{c}(z,\lambda)-i|\leq \frac{1}{2}, \quad |c|<r.
$$
Therefore
$$
\Big|\frac{1+i\sigma_{c}(z,\lambda)}{1-i\sigma_{c}(z,\lambda)}\Big|
=\Big|
\frac{-i+\sigma_{c}(z,\lambda)}{i+\sigma_{c}(z,\lambda)}\Big|\leq
\frac{\frac{1}{2}}{\frac{3}{2}}=\frac{1}{3}
$$

By the Dominated Convergence Theorem, for $|c|<r$, the sequence of
holomorphic functions
$$
\Psi_{n}(c)=\int\int_{\hbox{supp}(\alpha)}\alpha(z)
\frac{1+i\sigma_{c}(z,\frac{1}{n})}{1-i\sigma_{c}(z,\frac{1}{n})}dxdy
$$
converges uniformly to $\Psi(c)$ as $n\to \infty$. Thus $\Psi(c)$
is  holomorphic for $|c|<r$ and this implies that
$$
\mu(c, \cdot): \{c\; |\; |c|<r\} \to  {\mathcal L}^{\infty}(W)
$$
is holomorphic.

Now consider arbitrary $c_{0}\in \Delta$. Let $s=1-|c_{0}|$ and
let
$$
E_{0}=h(c_{0},E)\quad \hbox{and}\quad W_{0}=h(c_{0},W)
$$
and
$$
g(\tau,\zeta)=h(c_{0}+s\tau, z), \quad \zeta=h(c_{0},z).
$$
Then $W_{0}$ is the interior of $E_{0}$ since $h(c,z)$ is a
quasiconformal homeomorphism. Also
$$
g:\Delta\times E_{0}\to \C
$$
is a holomorphic motion. So the Beltrami coefficient of $g$ is a
holomorphic function on $\{ \tau \;| \; |\tau|<r\}$. Hence the
Beltrami coefficient of $h$ is a holomorphic function on $\{ c |
|c-c_{0}|<sr\}$. This concludes the proof.
\end{proof}

\vspace{10pt}
\begin{theorem}~\label{control}
Suppose $h(c, z): \Delta\times E\to \C $ is a holomorphic motion
of $E$ parametrized by $\Delta$ and with base point $0$ and
suppose $E$ has a nonempty interior $W$. Then for each
$c\in\Delta$, the map $h(c, z)|_{W}$ is a $K$-quasiconformal
homeomorphism of $W$ into $\C$ with
$$
K \leq \frac{1+|c|}{1-|c|}.
$$
\end{theorem}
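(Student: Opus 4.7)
The plan is to apply a Banach-space version of the Schwarz lemma to the map $c \mapsto \mu(c,\cdot)$. By Theorem~\ref{hol}, this map is a holomorphic function from $\Delta$ into the unit ball of $\mathcal{L}^{\infty}(W)$. Since $h(0,z) = z$ on $E$, its restriction to $W$ is the identity, so $\mu(0,\cdot) \equiv 0$. Thus we have a holomorphic map of $\Delta$ into the open unit ball of a Banach space, sending $0$ to $0$.

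Next I would invoke the Schwarz lemma in this Banach-space setting: for any continuous linear functional $\ell$ on $\mathcal{L}^{\infty}(W)$ with $\|\ell\| \le 1$, the scalar-valued function $c \mapsto \ell(\mu(c,\cdot))$ is holomorphic from $\Delta$ into $\overline{\Delta}$ and vanishes at $c = 0$, so the classical Schwarz lemma gives $|\ell(\mu(c,\cdot))| \le |c|$. Taking the supremum over all such $\ell$ via the Hahn--Banach theorem yields
\[
\|\mu(c,\cdot)\|_{\infty} \le |c|, \qquad c \in \Delta.
\]

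Finally, since by Theorem~\ref{qc} the map $h(c,\cdot)$ is quasiconformal (so differentiable almost everywhere in $W$) with Beltrami coefficient $\mu(c,\cdot)$, the standard formula for the pointwise dilatation gives
\[
K(h(c,\cdot)|_{W}) \;=\; \operatorname*{ess\,sup}_{z \in W} \frac{1+|\mu(c,z)|}{1-|\mu(c,z)|} \;\le\; \frac{1+\|\mu(c,\cdot)\|_{\infty}}{1-\|\mu(c,\cdot)\|_{\infty}} \;\le\; \frac{1+|c|}{1-|c|},
\]
as required. The one subtle point is the Banach-valued Schwarz lemma, but it reduces at once to the scalar case via Hahn--Banach as above; the remaining ingredients (holomorphicity and the bound $\|\mu(c,\cdot)\|_{\infty} < 1$) are exactly what Theorems~\ref{qc} and~\ref{hol} provide.
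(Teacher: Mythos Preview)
Your proof is correct and follows essentially the same route as the paper: both use Theorem~\ref{hol} to see that $c\mapsto\mu(c,\cdot)$ is holomorphic from $\Delta$ into the unit ball of $\mathcal{L}^{\infty}(W)$ with $\mu(0,\cdot)=0$, then apply the Schwarz lemma to obtain $\|\mu(c,\cdot)\|_{\infty}\le|c|$ and hence the dilatation bound. Your version is in fact more careful than the paper's, which simply invokes ``Schwarz's lemma'' without spelling out the Hahn--Banach reduction to the scalar case.
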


\begin{proof}
Since $\mu (c, \cdot) : \Delta\mapsto {\mathcal L}^{\infty} (W)$
is a holomorphic map and since $\mu (0,\cdot)=0$. From the
Schwarz's lemma, $\|\mu\|_{\infty}\leq |c|$. This implies that the
quasiconformal dilatation of $h(c, \cdot)$ is less than or
equation to $K=\frac{1+|c|}{1-|c|}$.
\end{proof}

\section{Extension of holomorphic motions for $r=1$.}

\vspace*{10pt}
\begin{theorem}[Slodkowski's Theorem]~\label{fullextension}
Suppose $h(c,z): \Delta\times E\to \C$ is a holomorphic motion.
Then there is a holomorphic motion $H (c,z): \Delta \times \C\to
\C$ which extends $h(c,z): \Delta \times E\to\C$.
\end{theorem}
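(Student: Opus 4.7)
The plan is to derive Slodkowski's theorem from Theorem~\ref{ch} together with the quasiconformal dilatation bound of Theorem~\ref{control} by a diagonal normal-family argument that simultaneously lets the parameter radius tend to $1$ and exhausts $E$ by finite subsets.

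Without loss of generality I assume the motion is normalized, so $\{0,1,\infty\}\subset E$ and $h(c,0)=0$, $h(c,1)=1$, $h(c,\infty)=\infty$. Choose a countable set $E_0=\{z_1,z_2,\ldots\}\subset E$ that is dense in $E$ and contains the three normalization points. For each $n\geq 3$ let $F_n=\{z_1,\ldots,z_n\}$ and $r_n=1-1/n$, and apply Theorem~\ref{ch} to $h|_{\Delta\times F_n}$ to obtain a normalized holomorphic motion $H_n:\Delta_{r_n}\times\C\to\C$ extending $h$ on $F_n$.

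By Theorem~\ref{control}, for each $c\in\Delta_{r_n}$ the map $H_n(c,\cdot)$ is $K_c$-quasiconformal with $K_c\leq(1+|c|)/(1-|c|)$, a bound uniform on compact subsets of $\Delta$ and independent of $n$. The class of normalized quasiconformal self-homeomorphisms of $\C$ fixing $0,1,\infty$ with uniformly bounded dilatation is a normal family in the spherical topology, giving equicontinuity of $\{H_n(c,\cdot)\}$ in $z$. For fixed $z\neq 0,1,\infty$, the cross-ratio argument already used in the proof of Theorem~\ref{qc}, combined with Schwarz's lemma for the hyperbolic metric of $\C\setminus\{0,1,\infty\}$, yields equicontinuity of $c\mapsto H_n(c,z)$ locally in $\Delta$ and uniformly in $z$ on compact subsets of $\C\setminus\{0,1,\infty\}$. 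A standard diagonal extraction then produces a subsequence $H_{n_k}$ converging locally uniformly on $\Delta\times\C$ to a map $H:\Delta\times\C\to\C$.

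It remains to verify that $H$ is a holomorphic motion extending $h$. The identity $H(0,z)=z$ is inherited from each $H_n$, and holomorphy of $c\mapsto H(c,z)$ follows because $H$ is a locally uniform limit of holomorphic functions. Injectivity of $H(c,\cdot)$ holds because a locally uniform limit of normalized $K$-quasiconformal maps with uniformly bounded $K$ is itself quasiconformal (the normalization at three points prevents collapse to a constant). Finally, $H(c,z)=h(c,z)$ for every $z\in E_0$, since eventually $z\in F_{n_k}$; density of $E_0$ in $E$ together with continuity of $H$ and of $h$ (the latter by the $\lambda$-lemma of M\`a\~n\'e--Sad--Sullivan) extends this identity to all of $E$. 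The step I expect to be hardest is obtaining joint equicontinuity on $\Delta\times\C$: equicontinuity in $z$ alone is classical, but coupling it with equicontinuity in $c$ uniformly in $z$ requires revisiting the Schwarz/cross-ratio estimates from the proof of Theorem~\ref{qc}. Once this is in place the remaining verifications are routine.
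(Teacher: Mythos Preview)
Your proposal is correct and follows essentially the paper's route: extend $h$ on finite subsets over growing radii via Theorem~\ref{ch}, control dilatation via Theorem~\ref{control}, and pass to the limit by normal families and Arzel\`a--Ascoli, the only difference being that the paper does this in two stages (first $E_i\uparrow E$ at fixed $r<1$, then $r\to 1$) while you combine them into a single diagonal extraction. One small correction: since your $H_n$ is parametrized by $\Delta_{r_n}$ rather than $\Delta$, Theorem~\ref{control} (after rescaling) gives $K\leq(1+|c|/r_n)/(1-|c|/r_n)$ rather than $(1+|c|)/(1-|c|)$, but this is still uniformly bounded on compact subsets of $\Delta$ as $r_n\to 1$, so the argument goes through unchanged.
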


\begin{proof}
Suppose $E$ is a subset of $\C$. Suppose $h(c,z): \Delta\times
E\to \C$ is a holomorphic motion. Let $E_{1}$, $E_{2}...$ be a
sequence of nested subsets consisting of finite number of points
in $E$. Suppose
$$
\{0,1,\infty\}\subset E_{1}\subset E_{2}\subset \cdots \subset E
$$
and suppose $\cup_{i=1}^{\infty} E_{i}$ is dense in $E$. Then
$h(c,z): \Delta\times E_{i}\to \C$ is a holomorphic motion for
every $i=1, 2, \ldots$.

From Theorem~\ref{ch}, for any $0<r<1$ and $i\geq 1$, there is a
holomorphic motion $H_{i}(c,z): \Delta_{r}\times \C\mapsto \C$
such that $H_{i} |\Delta_{r}\times E_{i} = h|\Delta_{r}\times
E_{i}.$  From Theorem~\ref{control}, $z \mapsto H_{i}(c, z)$ is
$(1+|c|/r)/(1-|c|/r)$-quasiconformal and fixes $0,1,\infty$ for
all $i>0$. So for any $|c| \leq r$, the functions $z \mapsto
H_{i}(c, z)$ form a normal family and there is a subsequence
$H_{i_{k}}(c,\cdot)$ converging uniformly (in the spherical
metric) to a $(1+|c|/r)/(1-|c|/r)$-quasiconformal homeomorphism
$H_{r}(c,\cdot):\C \to \C$ such that $H_{r} (c,z)=h(c,z)$ for
$z\in \cup (E_{j_{k}})$.

Let $\zeta$ be a point in $E$. Replacing $E_{i}$ by $E_{i}\cup
\{\zeta\}$ and repeating the previous construction we obtain a
$(1+|c|/r)/(1-|c|/r)$-quasiconformal homeomorphism $\tilde{H}_{r}$
which coincides with $h(c,z)$ on $\cup E_{i_{k}}\cup \{\zeta\}.$
But $z \mapsto H_{r}(c,z)$ and $z \mapsto \tilde{H}_{r}(c,z)$ are
continuous everywhere and coincide on $\cup E_{i_{k}}$, hence on
$E$. So $H_{r}(c,\zeta)=\tilde{H}_{r}(c,\zeta)=h(c,\zeta)$ for any
$\zeta \in E$.

Now for any $z\neq 0,1,\infty$, since $H_{i}(c, z): \Delta\mapsto
\C$ are holomorphic and omit three points $0,1,\infty$. So the
functions $c \mapsto H_i(c,z)$ form a normal family.  Any
convergent subsequence $H_{i_{k}}(c,z)$ still has a holomorphic
limit $H_{r}(c,z)$, thus $H_{r}(c,z): \Delta_{r}\times \C\mapsto
\C$ is a holomorphic motion which extends $h(c,z)$ on
$\Delta_{r}\times \C$.

Now we are ready to take the limit as $r \rightarrow 1$. For each
$0<r<1$, let $H_{r}(c, z): \Delta_{r}\times \C\to \C$ be a
holomorphic motion such that $H_{r}=h$ on $\Delta_{r}\times E$.
From Theorem~\ref{control}, $H_{r}(c, \cdot)$ is
$(1+|c|/r)/(1-|c|/r)$-quasiconformal for every $c$ with $|c|\leq
r$.

Take a sequence $Z=\{ z_{i}\}_{i=1}^{\infty}$ of points in $\C$
such that $\overline{Z}=\C$, and assume $0$, $1$, and $\infty$ are
not elements of $Z$. For each $i=1, 2, \cdots$, $H_{r}(c, z_{i}):
\Delta_{r}\to \C$ is holomorphic and omits $0,1, \infty$. Thus
$\{H_{r}(c, z_{i}), c\in \Delta_{r}\}_{0<r<1}$ forms a normal
family. We have a subsequence $r_{n}\to 1$ such that $H_{r_{n}}
(c, z_{i})$ tends to a holomorphic function $\tilde{H}(c, z_{i})$
defined on $\Delta$ uniformly on the spherical metric for all
$i=1,2, \cdots$. For a fixed $c\in \Delta$, $H_{r_{n}} (c, \cdot)$
are $(1+|c|/r_{n})/(1-|c|/r_{n})$-quasiconformal for all
$r_{n}>|c|$. So $\{ H_{r_{n}}(c,\cdot)\}_{r_{n}>|c|}$ is a normal
family. Since $H_{r_{n}}(c,\cdot)$ fixes $0,1,\infty$, there is a
subsequence of $\{ H_{r_{n}}(c,\cdot)\}$, which we still denote by
$\{ H_{r_{n}}(c,\cdot)\}$, that converges uniformly in the
spherical metric to a $(1+|c|)/(1-|c|)$-quasiconformal
homeomorphism $H(c,\cdot)$. Since $\tilde{H}(c,z_{i}) =H
(c,z_{i})$ for all $i=1,2,\cdots$, this implies that for any fixed
$c\in \Delta$, $H(c, z_{i})\neq H(c, z_{j})$ for $i\neq j$. Thus
$H(c, z): \Delta\times Z\to \C$ is a holomorphic motion.

For any $0<r<1$, $H(c,z)$ is $(1+r)/(1-r)$-quasiconformal for all
$c$ with $|c|\leq r$, it is $\alpha$-H\"older continuous, that is,
$$
d(H(c,z),H(c,z'))\leq A d(z,z')^{\alpha}\quad {\rm \ for \ all \ }
z,z' \in \C \quad {\rm \ and \  for  \ all \ } |c| \leq r,
$$
where $d(\cdot, \cdot)$ is the spherical distance and where $A$
and $0<\alpha<1$ depend only on $r$.

For any $z\in Z$ such that its spherical distances to $0$, $1$,
$\infty$ are greater than $\epsilon>0$, the map $H(c,z)$ is a
holomorphic map on $\Delta$, which omits the values $0$, $1$, and
$\infty$. So $H(c,z)$ decreases the hyperbolic distance
$\rho_{\Delta}$ on $\Delta$ and the hyperbolic distance
$\rho_{0,1}$ on $\C \setminus \{0,1,\infty\}$. So we have a
constant $B>0$ depending only on $r$ and $\epsilon$ such that
$$
d(H(c,z),H(c',z))\leq B |c-c'|
$$
for all $|c|, |c'|\leq r$ and all $z\in Z$ such that spherical
distances between them and $0$, $1$, and $\infty$ are greater than
$\epsilon>0$. Thus we get that
$$
d(H(c,z), h(c',z'))\leq A\delta(z,z')^{\alpha}+B |c-c'|.
$$
for $|c|,|c'|\leq r$ and $z,z' \in Z$ such that their spherical
distances from $0$, $1$, and $\infty$ are greater than
$\epsilon>0$. This implies that $H(c,z)$ is uniformly
equicontinuous on $|c|\leq r$ and $\{ z\in Z \;|\; d(z,
\{0,1,\infty\}) \geq \epsilon \}$. Therefore, its continuous
extension $H(c,z)$ is holomorphic in $c$ with $|c|\leq r$ for any
$\{ z\in \C \;|\; d(z, \{0,1,\infty\}\geq \epsilon \}$. Letting
$r\to 1$ and $\epsilon \to 0$, we get that $H(c,z)$ is holomorphic
in $c\in \Delta$ for any $z\in \C$. Thus $H (c,z): \Delta\times \C
\to \C$ is a holomorphic motion such that $H(c,z)|\Delta\times E=
h(c,z)$. We completed the proof.
\end{proof}

\section{The $|\epsilon\log \epsilon|$ continuity of a holomorphic motion}

In this section we show how the $|\epsilon \log \epsilon|$ modulus
of continuity for the tangent vector to a holomorphic motion can
be derived directly from Schwarz's lemma. Then we go on to show
how the H\"older continuity of the mapping $z \mapsto w(z)=h(c,z)$
with H\"older exponent $\frac{1-|c|}{1+|c|}$ follows from the
$|\epsilon \log \epsilon|$ continuity of the tangent vectors to
the curve $c \mapsto h(c,z)$. In particular, since any
$K$-quasiconformal map $z \mapsto f(z)$ coincides with $z \mapsto
h(c,z)$ where $K \leq \frac{1+|c|}{1-|c|}$, we conclude that $f$
satisfies a H\"older condition with exponent $1/K$.

\vspace*{10pt}
\begin{lemma}
Let $h(c,z)$ be a normalized holomorphic motion parametrized by
$\Delta$ and with base point $0$ and let $V(z)$ be the tangent
vector to this motion at $c=0$ defined by
\begin{equation}~\label{L11}
V(z)=\lim_{c\to 0} \frac{h(c,z)-z}{c}.
\end{equation}
Then $V(0)=0$,$V(1)=0$ and $|V(z)|=o(|z|^2)$ as $z
\rightarrow \infty.$
\end{lemma}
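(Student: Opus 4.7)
The plan is to treat the three conclusions separately. The first two are immediate from the normalization: $h(c,0)\equiv 0$ and $h(c,1)\equiv 1$ in $c$, so substituting into (\ref{L11}) forces $V(0)=0$ and $V(1)=0$ with no further work.

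For the asymptotic at infinity, the idea is to apply Schwarz's lemma to the slice $g_z(c)=h(c,z)$ for fixed $z\in\mathbb{C}\setminus\{0,1\}$. Because $w\mapsto h(c,w)$ is injective on $\C$ and fixes $0$, $1$, $\infty$, the holomorphic map $g_z:\Delta\to\C$ avoids the three values $0,1,\infty$; so it is a holomorphic map from the disk into the triply punctured sphere $X=\C\setminus\{0,1,\infty\}$ with $g_z(0)=z$. The Schwarz--Pick inequality for the Poincar\'e metric $\lambda_X(z)|dz|$ of $X$ yields
\[
\lambda_X(z)\,|g_z'(0)|\;\leq\;\lambda_{\Delta}(0)\;=\;1,
\]
and hence $|V(z)|=|g_z'(0)|\leq 1/\lambda_X(z)$.

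It remains to insert the standard asymptotic $\lambda_X(z)\sim 1/(|z|\log|z|)$ as $|z|\to\infty$, which one sees by passing to the coordinate $w=1/z$ and comparing $X$ with a once-punctured neighborhood of $w=0$ whose hyperbolic density is $|dw|/(|w|\log(1/|w|))$. This gives $|V(z)|=O(|z|\log|z|)$, which is in particular $o(|z|^2)$, as required. The only delicate point is citing this density asymptotic; if one wishes to avoid importing it, an equivalent route internal to the paper is to pass to the inverted motion $\tilde h(c,w)=1/h(c,1/w)$, observe that $\tilde h$ is again a normalized holomorphic motion (fixing $0$, $1$, $\infty$) with $\tilde V(w)=-w^2V(1/w)$, and apply the same Schwarz--Pick estimate now to $\tilde h$ at points $w$ near $0$; the bound $|\tilde V(w)|\leq 1/\lambda_X(w)$ then transports the decay of $\tilde V$ at the origin back to the desired growth bound on $V$ at infinity.
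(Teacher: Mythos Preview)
Your argument is correct but takes a different route from the paper. The paper proves $|V(z)|=o(|z|^2)$ by passing to the coordinate $w=1/z$: the inverted motion $h_1(c,w)=1/h(c,1/w)$ fixes $w=0$, so its tangent field $V^w$ satisfies $V^w(0)=0$; the transformation law $V^w(w)=-V(z)/z^2$ together with continuity of $V^w$ in $w$ (which follows from joint continuity of the motion and Cauchy's integral formula for $\partial_c h$) then gives $V(z)/z^2\to 0$. Your route instead applies Schwarz--Pick to the slice $c\mapsto h(c,z)$ and bounds $|V(z)|$ by a constant times $1/\rho_{0,1}(z)$ (note $\lambda_\Delta(0)=2$ in the paper's curvature~$-1$ normalization, not $1$, though this constant is harmless). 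With the cusp asymptotic $\rho_{0,1}(z)\sim 1/(|z|\log|z|)$ you obtain the sharper estimate $|V(z)|=O(|z|\log|z|)$. This is precisely the mechanism the paper deploys a few lines later in inequality~(\ref{eq5}) of Theorem~\ref{Zygmundforvector}, so your approach is fully compatible with the surrounding section---you are simply invoking the Schwarz--Pick bound one step earlier than the paper does. One small remark: your ``alternative route'' via the inverted motion does not actually avoid the density asymptotic, it merely relocates it to the puncture at $0$; but since the paper's Lemma~\ref{LiZhong} supplies exactly that lower bound on $\rho_{0,1}$ for $|w|<1$, the estimate is indeed available internally.
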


\begin{proof} Since $h(c,z)$ is normalized, $h(c,0)=0$ and
$h(c,1)=1$ for every $c\in \Delta$, and therefore $V(0)=0$ and
$V(1)=0$.  Since $h(c,\infty)=\infty$ for every $c\in \Delta$ if
we introduce the coordinate $w=1/z$ and consider the motion
$h_1(c,w)=1/h(c,1/w)$, we see that $h_1(c,0)=0$ for every $c\in
\Delta$.

Put $p(c)=h(c,z)$ and if we think of $z$ as a local coordinate for
the Riemann sphere,
$$
z \circ p(c)=z+c V^z(z)+o(c^2)
$$
and in terms of the local coordinate $w=1/z$,
$$
w \circ p(c)= w + c V^w(w)+o(c^2).
$$
Then $V^{w}(0)=0$. Putting $g=w \circ z^{-1}$, the identity
$g(z(p(c)))=w(p(c))$ yields
\begin{equation}~\label{transition}
g'(z(p(0))z'(p(0))=w'(p(0)).
\end{equation}
Since $g(z)=1/z,$ $g'(z)=-(1/z)^2$ and since $$V^{w} (0)=0, \
\frac{d}{dc}w(p(c))|_{c=0}=V^w(w(p(0)))$$ and $V^w(w(p(c))$ is a
continuous function of $c$, the equation
$$
V^z(z(p(c)))\frac{dw}{dz}=V^w(w(p(c)))
$$
implies
$$
\frac{V^z(z)}{z^2} \rightarrow 0
$$
as $z\rightarrow \infty$.
\end{proof}

Let $\rho_{0, 1}(z)$ be the infinitesimal form for the hyperbolic
metric on  $\C\setminus \{0, 1, \infty\}$ and let
$\rho_{\Delta}(z) =2/(1-|z|^{2})$ be the infinitesimal form for
the hyperbolic metric on $\Delta$. For any four distinct points
$a,b,c$ and $d$, the cross ratio
$$g(c)=cr(h_{c}(a),h_{c}(b),h_{c}(c),h_{c}(d))$$ is a holomorphic
function of $c\in \Delta$, and omitting the values $0,1$ and
$\infty$. Then by Schwarz's lemma,
$$
\rho_{0,1}(g(c))|g'(c)|\leq  \sigma_{\Delta} (c)
=\frac{2}{1-|c|^2}
$$
and
\begin{equation}~\label{eq1}
\rho_{0,1}(g(0))|g'(0)|\leq 2.
\end{equation}
But
\begin{equation}~\label{eq2}
|g'(0)|=|g(0)|
\left|\frac{V(b)-V(a)}{b-a}-\frac{V(c)-V(b)}{c-b}+\frac{V(d)-V(c)}{d-c}
-\frac{V(a)-V(d)}{a-d}\right|
\end{equation}
where $g(0)=cr (a,b,c,d)=\frac{(b-a)(d-c)}{(c-b)(a-d)}.$

\vspace*{10pt}
\begin{lemma}~\label{vanishing}
If $V(b)=o(b^2)$ as $b \rightarrow \infty$, then
$$
\left(\frac{V(b)-V(a)}{b-a}- \frac{V(c)-V(b)}{c-b}\right)
\rightarrow 0 \quad \hbox{as} \quad b \rightarrow \infty.
$$
\end{lemma}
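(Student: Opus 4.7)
The plan is to reduce the expression to a single algebraic quantity whose decay follows directly from the growth bound $V(b) = o(b^2)$. First I would rewrite
$$\frac{V(c)-V(b)}{c-b} = \frac{V(b)-V(c)}{b-c},$$
so the quantity of interest becomes
$$\frac{V(b)-V(a)}{b-a} - \frac{V(b)-V(c)}{b-c}.$$
Next I would split each divided difference as $V(b)/(b-a) - V(a)/(b-a)$ and $V(b)/(b-c) - V(c)/(b-c)$, which gives
$$V(b)\left(\frac{1}{b-a} - \frac{1}{b-c}\right) - \frac{V(a)}{b-a} + \frac{V(c)}{b-c}.$$

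With $a$ and $c$ held fixed, the last two terms tend to $0$ as $b\to\infty$, since their numerators are constant while the denominators blow up. The remaining expression collects into
$$V(b)\cdot\frac{a-c}{(b-a)(b-c)}.$$
Because $(b-a)(b-c) = b^2(1+O(1/b))$ as $b\to\infty$, the factor $1/[(b-a)(b-c)]$ is $O(1/b^2)$. The hypothesis $V(b) = o(b^2)$ then forces this expression to be $o(1)$, and combining the three contributions yields the desired limit.

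There is no substantive obstacle here; the lemma is essentially algebraic bookkeeping combined with one growth estimate. The only point worth emphasizing is that the strict little-$o$ hypothesis cannot be relaxed to $O(b^2)$, since the last step uses $o(b^2)\cdot O(1/b^2) = o(1)$. The preceding lemma supplies exactly this $o(|z|^2)$ bound for tangent vectors to normalized holomorphic motions, so the hypothesis will be automatic in the intended application to the cross-ratio estimate (\ref{eq2}).
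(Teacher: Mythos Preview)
Your proof is correct and follows essentially the same approach as the paper: both put the expression over the common denominator $(b-a)(c-b)$ (or equivalently $(b-a)(b-c)$), observe that this denominator grows like $b^2$, and check that the numerator is $o(b^2)$ because the only potentially large contribution is $(c-a)V(b)$. Your organization---separating the $V(b)$ piece from the constant $V(a)$, $V(c)$ pieces before combining---is slightly cleaner than the paper's single-fraction computation, but the underlying argument is identical.
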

\begin{proof}
$$\left(\frac{V(b)-V(a)}{b-a}- \frac{V(c)-V(b)}{c-b}\right)
$$
simplifies to
$$\frac{cV(b)-bV(c)-aV(b)-cV(a)+bV(a)+aV(c)}{(b-a)(c-b)}.$$
As $b \rightarrow \infty$ the denominator grows like $b^2$ but the
numerator is $o(b^2).$
\end{proof}

\vspace*{10pt}
\begin{theorem}~\label{Zygmundforvector} For any vector field $V$ tangent to a normalized
holomorphic motion and defined by (\ref{L11}), there exists a
number $C$ depending on $R$ such that for any two complex numbers
$z_1$ and $z_2$ with $|z_{1}|<R$ and $|z_{2}|<R$ and
$|z_1-z_2|<\delta,$
$$|V(z_{2})-V(z_{1})|\leq |z_{2}-z_{1}|(2+\frac{C}{\log \frac{1}{\delta}})(\log
\frac{1}{|z_{2}-z_{1}|}).$$
\end{theorem}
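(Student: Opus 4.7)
The plan is to apply Schwarz's lemma in the form (\ref{eq1}) to the cross-ratio $g$ of four points under the holomorphic motion, with $(a,b,c,d)=(\alpha,b,z_1,z_2)$, and then pass to the limit $b\to\infty$ using Lemma~\ref{vanishing}. The auxiliary point $\alpha$ is chosen at unit distance from both $z_1$ and $z_2$ (and away from $\{0,1\}$), which places the limiting $g(0)$ at absolute value $\epsilon:=|z_2-z_1|$, i.e.\ near the puncture $0$ of $\C\setminus\{0,1,\infty\}$. The sharp leading constant $2$ then emerges from $\rho_\Delta(0)=2$ in Schwarz's lemma together with the precise asymptotic $|w|\rho_{0,1}(w)\log(1/|w|)\to 1$ as $w\to 0$.

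For $z_1,z_2$ with $|z_1|,|z_2|<R$ and $\epsilon<\delta<1/2$, I would take $\alpha$ on the perpendicular bisector of $\overline{z_1 z_2}$ with $|\alpha-z_1|=|\alpha-z_2|=1$ and $\alpha\notin\{0,1\}$; such a point exists since $\epsilon<2$ gives two candidate intersections and at most two values of $\alpha$ need be excluded. In the limit $b\to\infty$, Lemma~\ref{vanishing} kills the first two summands on the right-hand side of (\ref{eq2}), so (\ref{eq2}) reduces to
$$\frac{|g'(0)|}{|g(0)|}=\left|\frac{V(z_2)-V(z_1)}{z_2-z_1}-\frac{V(z_2)-V(\alpha)}{z_2-\alpha}\right|,$$
while $|g(0)|\to \epsilon$ by our choice of $\alpha$. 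Applying Schwarz's lemma directly to $c\mapsto h(c,z)$ (which omits $\{0,1,\infty\}$ for each fixed $z\notin\{0,1,\infty\}$) gives $|V(z)|\leq 2/\rho_{0,1}(z)$; in particular $V$ is bounded on $\{|z|\leq R+1\}$ by some $M_R$, so the ``extra'' term satisfies $|(V(z_2)-V(\alpha))/(z_2-\alpha)|\leq 2M_R$.

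A Hempel-type sharp lower bound $|w|\rho_{0,1}(w)\geq 1/(\log(1/|w|)+K_0)$ valid for $|w|\leq 1/2$ converts (\ref{eq1}) into $|g'(0)/g(0)|\leq 2(\log(1/\epsilon)+K_0)$; combining with the triangle inequality and the bound on the extra term yields
$$\left|\frac{V(z_2)-V(z_1)}{z_2-z_1}\right|\leq 2\log(1/\epsilon)+2K_0+2M_R.$$
Multiplying through by $\epsilon$ and absorbing the bounded additive terms into $\epsilon\log(1/\epsilon)\cdot C/\log(1/\delta)$ via $\epsilon<\delta$ gives the stated inequality. The principal obstacle is extracting the leading constant $2$ exactly: a crude bound of the form $|w|\rho_{0,1}(w)\geq c/\log(1/|w|)$ with $c<1$ would produce the correct $|\epsilon\log\epsilon|$ modulus but a strictly larger coefficient. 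Obtaining the sharp $2$ requires Hempel's inequality, or equivalently the precise local uniformization of the triply punctured sphere by a punctured disk near each puncture.
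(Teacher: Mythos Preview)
Your argument is correct and follows the same overall strategy as the paper---form the cross-ratio of four points under the motion, apply Schwarz's lemma in the form~(\ref{eq1})--(\ref{eq2}), and convert via a sharp lower bound on $\rho_{0,1}$ near a puncture---but the choice of the four points is genuinely different. The paper takes $(a,b,c,d)=(z_1,z_2,0,\infty)$, exploiting the normalization directly; this gives $g(0)=(z_2-z_1)/z_2$, so the size of $|z_2|$ enters and a second application of~(\ref{eq1}) with $(0,1,\infty,z_2)$ is needed to bound $|V(z_2)/z_2|$ (inequality~(\ref{eq5})), followed by a case split $|z_2|<1$ versus $1\le|z_2|\le R$. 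Your choice $(\alpha,\infty,z_1,z_2)$ with $|z_2-\alpha|=1$ forces $|g(0)|=|z_2-z_1|$ directly, so no such case analysis is required; the price is the auxiliary point $\alpha$ and the side bound $|V(z_2)-V(\alpha)|\le 2M_R$, which you obtain cleanly from $|V(z)|\le 2/\rho_{0,1}(z)$. For the metric estimate, the paper proves its own version (Lemma~\ref{LiZhong}) from Agard's formula and Ahlfors' curvature comparison, whereas you invoke Hempel's inequality; both yield the asymptotic $|w|\,\rho_{0,1}(w)\log(1/|w|)\to 1$ that produces the sharp leading constant~$2$. Your route is arguably tidier at the cost of importing Hempel's bound as a black box.
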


\begin{proof}
By applying Lemma \ref{vanishing}, inequality (\ref{eq1}) and
equation (\ref{eq2}) to $a=z_{1}, b=z_{2},c=0,d=\infty$, we obtain
$g(0)=\frac{z_{2}-z_{1}}{z_{2}},$
$$\left|\frac{V(b)-V(a)}{b-a}-\frac{V(c)-V(b)}{c-b}+\frac{V(d)-V(c)}{d-c}
-\frac{V(a)-V(d)}{a-d}\right|$$
$$=\left|\frac{V(z_{2})-V(z_{1})}{z_{2}-z_{1}}-
\frac{V(z_{2})}{z_{2}}\right|,$$ and
$$\rho_{0,1}\left(\frac{z_{2}-z_{1}}{z_{2}}\right)\left|\frac{z_{2}-z_{1}}{z_{2}}\right|
\left|\frac{V(z_{2})-V(z_{1})}{z_{2}-z_{1}}-\frac{V(z_{2})}{z_{2}}\right|\leq
2$$ and so
\begin{equation}\label{eq3}
\left|\frac{V(z_{2})-V(z_{1})}{z_{2}-z_{1}}-\frac{V(z_{2})}{z_{2}}\right|\leq
\frac{2}{\rho_{0,1}\left(\frac{z_{2}-z_{1}}{z_{2}}\right)
\left|\frac{z_{2}-z_{1}}{z_{2}}\right|}.
\end{equation}

Applying (\ref{eq1}) and (\ref{eq2}) again with $a=0, b=1,
c=\infty, d=z_{2}$,  we obtain
$$\rho_{0,1}(z_{2})|z_{2}|\left|\frac{V(z_{2})}{z_{2}}\right|\leq 2,$$
and so \begin{equation}\label{eq5} \frac{|V(z_{2})|}{|z_{2}|}\leq
\frac{2}{\rho_{0,1}(z_{2})|z_{2}|} \end{equation} and this
together with (\ref{eq3}) implies
\begin{equation}\label{eq4}\left|\frac{V(z_{2})-V(z_{1})}{z_{2}-z_{1}}\right|
\leq
\frac{2}{\rho_{0,1}\left(\frac{z_{2}-z_{1}}{z_{2}}\right)
\left|\frac{z_{2}-z_{1}}{z_{2}}\right|}
+\frac{2}{\rho_{0,1}(z_{2})|z_{2}|}.
\end{equation}

To finish the proof we need the following lemma, a form of which
appeared in~\cite[page 40]{LiZhongbook}. We adapted similar ideas
to prove the following version, which is sufficient for the proof
of Theorem~\ref{Zygmundforvector}.

\vspace*{10pt}
\begin{lemma}~\label{LiZhong}
If $0<|z|<1$, then
$$
\rho_{0,1}(z)\geq \frac{1}{|z|(\log r+\log\frac{1}{|z|})},
$$
where $r$ is chosen so that
$$
\log r > \max \{\frac{1}{\pi}\int \int_{\mathbb C} \frac{d \xi d
\eta}{|(\zeta+1)\zeta(\zeta-1)|}, 4+\log 4\}
$$
(Note that numerical calculation suggests that $4+\log 4$ is the
larger of these two numbers.)
\end{lemma}

\begin{proof}
From Agard's formula~\cite{Agard} (note that $\rho_{0,1}$ has the
curvature $-1$),
$$
\rho_{0,1}(z) = \left( \frac{1}{2\pi}\int \int_{\mathbb C}
\left|\frac{z(z-1)}{\zeta(\zeta-1)(\zeta-z)}\right| d \xi d \eta
\right)^{-1}.
$$
Since the smallest value of $\rho_{0,1}(z)$ on the circle $|z|=1$
occurs at  $z=-1$, we see that
$$
\frac{1}{\log r} \leq \min_{|z|=1} \rho_{0,1}(z) =
\Big(\frac{1}{\pi}\int \int_{\mathbb C} \left|\frac{1}{(\zeta-1)
(\zeta)( \zeta+1)}\right| d \xi d \eta\Big)^{-1}.
$$

The infinitesimal form of the Poincar\'e metric
$\rho_r=\rho_{\Delta_{r}^{*}}$ with curvature constantly equal to
$-1$ for the punctured disk $\Delta_r^{*}=\{ z\in \CP\;|\;
0<|z|<r\}$ is
\begin{equation}~\label{bla}
\rho_r(z)=\frac{1}{|z|\left[\log r + \log\frac{1}{|z|} \right]}.
\end{equation}
Note that $\rho_r(z)$ takes the constant value $\frac{1}{\log r}$
along $|z|=1$. Then
\begin{equation}\label{bd1} \rho (z) \leq
\rho_{0,1}(z) {\rm \ for \ all \ } z {\rm \ with \ } |z|= 1.
\end{equation}

Our next objective is to show that the same inequality
\begin{equation}\label{bd2} \rho (z) \leq
\rho_{0,1}(z)
\end{equation}
holds for all $z$ with $|z|<\delta$ when $\delta$ is sufficiently
small. In \cite[pages 17 and 18]{Ahlforsbook6} Ahlfors shows that
\begin{equation}\label{Ahlfors}
\rho_{0,1}(z)\geq \frac{|\zeta'(z)|}{|\zeta(z)|}\frac{1}{4+ \log
\frac{1}{|\zeta(z)|}}
\end{equation}
for $|z|\leq 1$ and $|z|\leq |z-1|,$  where $\zeta$  maps the
complement of $[1,+\infty]$ conformally onto the unit disk,
origins corresponding to each other and symmetry with respect to
the real axis being preserved. $\zeta$ satisfies
\begin{equation}\label{logzeta}
\frac{\zeta'(z)}{\zeta(z)}=\frac{1}{z\sqrt{1-z}},\ \end{equation}
\begin{equation}\label{Zeta1}\zeta(z)=\frac{\sqrt{1-z}-1}{\sqrt{1-z}+1}=
\frac{z}{(\sqrt{1-z}+1)^2}\end{equation}
 with ${\rm Re \ }
\sqrt{1-z}>0,$ and
\begin{equation}\label{Zeta2}|\zeta(z)| \rightarrow \frac{|z|}{4}
 \end{equation} as $z \rightarrow 0.$

We now show that there is $\delta>0$ such that if $|z|<\delta,$
then
$$
\frac{|\zeta'|}{|\zeta|}\frac{1}{[4+ \log
\frac{1}{|\zeta|}]}\geq \frac{1}{|z|[\log r+\frac{1}{|z|}]}.
$$
From (\ref{logzeta}) this is equivalent to showing that
$$
|\sqrt{1-z}|(4 + \log \frac{1}{|\zeta|}) \leq \log r+
\log\frac{1}{|z|},
$$
which is equivalent to
\begin{equation}\label{last}
|\sqrt{1-z}|(4+\log 4)\leq \log r+
\left\{\left(\log\frac{1}{|z|}\right)\left((1-|\sqrt{1-z}|)
\left(\frac{\log\frac{1}{|\zeta|}-\log4}
{\log\frac{1}{|z|}}\right)\right)\right\}.\end{equation} From
(\ref{Zeta2})
$$\left(\frac{\log\frac{1}{|\zeta|}-\log4}
{\log\frac{1}{|z|}}\right)$$ approaches $1$ as $z \rightarrow 0$
and the expression in the curly brackets on the right hand side of
(\ref{last}) approaches zero. Thus, in order to prove (\ref{bd2}),
it suffices to observe that
$$4 + \log 4 < \log r,$$
which is part of what we assumed.

We have so far established that $\rho_{0,1}(z)\geq \rho_R(z)$ on
the unit circle and on any circle $|z|=\delta$ for sufficiently
small $\delta.$  To complete the proof of the lemma we observe
that since both metrics $\rho_{0,1}(z)$ and $\rho_r(z)$ have
constant curvatures equal to $-1$, if we denote the Laplacian by
$$
\Delta = \left(\frac{\partial }{\partial x}\right)^2+
\left(\frac{\partial}{\partial y}\right)^2,
$$
then
$$
-\rho_{0,1}^{-2} \Delta \log \rho_{0,1} = - 1 {\rm \ and \ }
-\rho_r^{-2} \Delta \log \rho_r  = - 1.
$$
Therefore,
\begin{equation}~\label{curv}
\Delta (\log \rho_{0,1} - \log \rho_r) = \rho_{0,1}^2 - \rho_r^2
\end{equation}
throughout the annulus $\{z:\delta \leq |z| \leq 1\}.$  The
minimum of $\rho_{0,1}/\rho_r$ in this annulus
 occurs
either at a boundary point or in the interior. If it occurs at an
interior point, then its Laplacian of $\log(\rho_{0,1}/\rho_r \geq
1$ at that point and if it occurs on the boundary then
$\rho_{0,1}/\rho_r \geq 1$ at that point. In either case
$$ 0 \leq \Delta(\log \rho_{0,1} - \log
\rho_r)= \rho_{0,1}^2-\rho_r^{2}
$$
at that point, and therefore
$$
\rho_{0,1} \geq \rho_r
$$
throughout the annulus. This completes the proof of the lemma.
\end{proof}

From (\ref{eq4}) and this lemma we obtain
$$
|V(z_{2})-V(z_{1})|\leq
|z_{2}-z_{1}|\left(\left|\frac{V(z_{2})}{z_{2}}\right|+2 \log
r+2\log|z_{2}|+2\log \frac{1}{|z_{2}-z_{1}|}\right).
$$
Therefore to prove the theorem we must show that for
$\epsilon=\frac{C}{\log(1/\delta)}$
$$
\left|\frac{V(z_{2})}{z_{2}}\right|+2 \log r+2\log|z_{2}|+2\log
\frac{1}{|z_{2}-z_{1}|}\leq (2+\epsilon)\log
\frac{1}{|z_{2}-z_{1}|}.
$$
This is equivalent to showing that
$$
\left|\frac{V(z_{2})}{z_{2}}\right|+2 \log r+2\log|z_{2}|\leq
\epsilon \log \frac{1}{|z_{2}-z_{1}|}.
$$
If $|z_{2}|<1$, from (\ref{eq5}) and Lemma~\ref{LiZhong}, we have
$$
\rho_{0,1}(z_{2})\geq \frac{1}{|z_{2}| (\log
r+\log\frac{1}{|z_{2}|})},
$$
and
$$
\frac{|V(z_{2})|}{|z_{2}|}\leq 2\log r +2\log\frac{1}{|z_{2}|}.
$$
Hence
$$
\left|\frac{V(z_{2})}{z_{2}}\right|+2 \log r+2\log|z_{2}|\leq 4
\log r.
$$
If $1\leq|z_{2}|\leq R$, then since
$|\frac{V(z_{2})}{z_{2}}|+2\log|z_{2}|$ is a continuous function,
it is bounded by a number $M_{1},$ so
$$\left|\frac{V(z_{2})}{z_{2}}\right|
+2 \log r+2\log|z_{2}|\leq M_{1}+2 \log r.$$ The constant
$C=M_{1}+2 \log r$ does not depend on $\delta$ and
$\left|\frac{V(z_{2})}{z_{2}}\right|+2 \log r+2\log|z_{2}|\leq C$
for any $|z_{2}|\leq R.$ Thus, putting
$\epsilon=C/\frac{1}{\log(1/\delta)}$, we obtain
$$|V(z_{2})-V(z_{1})|\leq |z_{2}-z_{1}|(2+\epsilon)\left(\log
\frac{1}{|z_{2}-z_{1}|}\right).$$
\end{proof}
Applying the same argument at a variable value of $c$ we obtain
the following result.

\vspace*{10pt}
\begin{theorem}\label{concl}
Suppose $0<r<1$ and $R>0$. If $|c|\leq r$, $|z_{1}(c)|\leq R$,
$|z_{2}(c)| \leq R$ and $|z_{2}(c)-z_{1}(c)|<\delta$, then
\begin{equation}~\label{eq7}
|V(z_{2}(c))-V(z_{1}(c))|\leq
\frac{2+\epsilon}{1-|c|^2}|z_{2}(c)-z_{1}(c)|\log
\frac{1}{|z_{2}(c)-z_{1}(c)|},
\end{equation}
where $\epsilon \leq \frac{M}{\log (1/\delta)}$ and $\delta \geq
|z_1(0)-z_2(0)|$. Moreover, there is a constant $C$ such that
$$
|z_{2}(c)-z_{1}(c)|\leq C\cdot
|z_{2}-z_{1}|^{\frac{1-|c|}{1+|c|}}.$$
\end{theorem}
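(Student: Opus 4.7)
My plan is to prove the two assertions in sequence, with the Hölder bound obtained by integrating a differential inequality implied by (\ref{eq7}).

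For (\ref{eq7}) itself, I would rerun the cross-ratio argument from Theorem~\ref{Zygmundforvector} verbatim, but applying Schwarz's lemma to the holomorphic function $g(\tau)=Cr(h(\tau,a),h(\tau,b),h(\tau,c),h(\tau,d))$ at the parameter $\tau=c$ rather than at $\tau=0$. The sole change is the replacement of $\rho_{\Delta}(0)=2$ by $\rho_{\Delta}(c)=2/(1-|c|^2)$, which accounts exactly for the factor $1/(1-|c|^2)$ appearing in (\ref{eq7}); the four-point choice $a=z_{1},\ b=z_{2},\ c=0,\ d=\infty$, the vanishing step of Lemma~\ref{vanishing}, the second application of Schwarz producing a bound on $|V(z_{2}(c))|/|z_{2}(c)|$, and the hyperbolic density estimate of Lemma~\ref{LiZhong} all transfer unchanged, now evaluated along the trajectories $z_{i}(c)=h(c,z_{i})$ in place of $z_{i}$.

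For the Hölder estimate I would set $\phi(c)=z_{2}(c)-z_{1}(c)$ and use the identity $\partial_{c}h(c,z_{i})=V(h(c,z_{i}))$ (the defining relation of the tangent vector to the motion at parameter $c$) to rewrite (\ref{eq7}) as the scalar differential inequality
$$
|\phi'(c)|\leq \frac{2+\epsilon}{1-|c|^2}\,|\phi(c)|\,\log\frac{1}{|\phi(c)|}.
$$
Restricting to the radial segment $c(t)=te^{i\arg c}$ for $0\leq t\leq |c|$ and setting $v(t)=\log(1/|\phi(c(t))|)$, I would use $|d|\phi|/dt|\leq |\phi'|$ to deduce the linear inequality $|v'(t)|\leq (2+\epsilon)v(t)/(1-t^2)$. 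Integrating its lower form $v'\geq -(2+\epsilon)v/(1-t^2)$ from $0$ to $|c|$ gives
$$
v(|c|)\geq v(0)\left(\frac{1-|c|}{1+|c|}\right)^{1+\epsilon/2},
$$
so that, writing $\alpha=(1-|c|)/(1+|c|)$, one obtains $|\phi(c)|\leq |\phi(0)|^{\alpha^{1+\epsilon/2}} = |\phi(0)|^{\alpha}\cdot |\phi(0)|^{-\alpha(1-\alpha^{\epsilon/2})}$. Because $1-\alpha^{\epsilon/2}=O(\epsilon)$ and $\epsilon\leq M/\log(1/\delta)$, choosing $\delta$ comparable to a fixed power of $|\phi(0)|$ (for example $\delta\sim |\phi(0)|^{(1-r)/(1+r)}$) keeps the exponent $\alpha(1-\alpha^{\epsilon/2})\log(1/|\phi(0)|)$ bounded by a quantity depending only on $r$ and $M$, which exponentiates to a constant factor $C$ and yields the advertised bound $|\phi(c)|\leq C|\phi(0)|^{\alpha}$.

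The main obstacle is the circularity in this last step: the hypothesis of (\ref{eq7}) requires $|\phi(c(t))|<\delta$ throughout the segment $[0,|c|]$, while the choice $\delta\sim |\phi(0)|^{(1-r)/(1+r)}$ is motivated by the very conclusion we are trying to establish. I would resolve this by a standard continuity/bootstrap argument: let $t^{*}$ be the supremum of those $t\in[0,|c|]$ on which $|\phi(c(s))|<\delta$ for every $s\in[0,t]$. The preliminary quasiconformality and uniform continuity of $h(c,\cdot)$ on compacta supplied by Theorem~\ref{control} and the $\lambda$-lemma guarantee $t^{*}>0$, and the integrated estimate just derived shows $|\phi(c(t^{*}))|\leq C|\phi(0)|^{(1-t^{*})/(1+t^{*})}$, which is strictly less than $\delta$ once the constant implicit in the choice $\delta=2C|\phi(0)|^{(1-r)/(1+r)}$ is fixed. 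Continuity then forces $t^{*}=|c|$, completing the proof.
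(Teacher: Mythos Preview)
Your proposal is correct and mirrors the paper's proof: both obtain (\ref{eq7}) by rerunning the cross-ratio Schwarz argument at parameter $c$ (picking up the factor $1/(1-|c|^{2})$), then integrate the resulting differential inequality for $s(c)=|z_{2}(c)-z_{1}(c)|$ along a ray to reach $s(c)\leq s(0)^{\alpha^{1+\epsilon/2}}$ with $\alpha=(1-|c|)/(1+|c|)$, and finally absorb the $\epsilon$-deviation into a multiplicative constant. The paper carries out this last step by substituting $\epsilon=M/\log(1/s(0))$ directly and bounding $s(0)^{\alpha(\alpha^{\epsilon}-1)}$ via the elementary inequality $e^{x}-1\leq x e^{x_{0}}$, without pausing to verify that the hypothesis $s(c')<\delta$ of (\ref{eq7}) persists along the entire integration path; your continuity/bootstrap argument with $\delta\sim s(0)^{(1-r)/(1+r)}$ is therefore not a different route but a genuine refinement that closes a point the paper leaves implicit.
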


\begin{proof} Equation~(\ref{eq7}) follows by the same calculations we have just
completed.  To prove the second inequality, put
$s(c)=|z_{2}(c)-z_{1}(c)|$ and assume $0<|c|<1.$ Then~(\ref{eq7})
yields
$$
s'(c)\leq \frac{2+\epsilon}{1-|c|^2} s(c)\log\frac{1}{s(c)}.
$$
So
$$
-(\log\frac{1}{s(c)})'\leq
\frac{2+\epsilon}{1-|c|^2}\log\frac{1}{s(c)}
$$
and
$$
-(\log(\log\frac{1}{s(c)}))'\leq \frac{2+\epsilon}{1-|c|^2}.
$$
By integration,
$$
-\log(\log\frac{1}{s(c)})\Big|_{0}^{c}\leq -\frac{2+\epsilon}{2}
\log\frac{1-|c|}{1+|c|}\Big|_{0}^{|c|}
$$ and
$$
\log\log(\frac{1}{s(c)})-\log\log(\frac{1}{s(0)})
\geq\log\left(\frac{1-|c|}{1+|c|}\right)^{1+\frac{\epsilon}{2}}.
$$
Since $\log x$ is increasing,
$$
\frac{\log\frac{1}{s(c)}}{\log\frac{1}{s(0)}}\geq
\left(\frac{1-|c|}{1+|c|}\right)^{1+\frac{\epsilon}{2}},
$$
$$
\log s(c)\leq
\left(\frac{1-|c|}{1+|c|}\right)^{1+\frac{\epsilon}{2}} \log s(0)
$$
and
$$
s(c)\leq s(0)^{(\frac{1-|c|}{1+|c|})^{1+\frac{\epsilon}{2}}}.
$$
Putting $s=s(0)$ and $\alpha=\frac{1-|c|}{1+|c|}$, we wish to show
that
\begin{equation}~\label{final}
s^{\alpha^{1+\epsilon}}\leq C
s^\alpha {\rm \ or \ equivalently \ that \ }
s^{(\alpha^{1+\epsilon}-\alpha)} \leq C.
\end{equation}
This is equivalent to showing that
$$
\alpha(\alpha^{\epsilon} - 1)
\log s\leq \log C
$$
or that
$$
\alpha(\exp(\frac{M}{\log(1/s)}\log \alpha)-1) \log s \leq \log
C.
$$
Since $0<\alpha<1$ and since we may assume $s<e^{-1}$, by using
the inequality $e^x-1 \leq x e^{x_0} {\rm \ for \ } 0 \leq x \leq
x_0$, we see that it suffices to choose $C$ so that
$$
\alpha \frac{M}{\log(1/s)}\log (1/\alpha) e^{M \log \alpha}\log (1/s) =
\alpha M \log (1/\alpha) e^{M \log \alpha} \leq \log C.
$$
\end{proof}
The idea for the proof of Theorem \ref{concl} is suggested but not
worked out in  \cite{GardinerKeen1}.

\section{Kobayashi's metrics}

Suppose ${\mathcal N}$ is a connected complex manifold over a
complex Banach space. Let ${\mathcal H}={\mathcal H}(\Delta,
{\mathcal N})$ be the space of all holomorphic maps from $\Delta$
into ${\mathcal N}$. For $p$ and $q$ in ${\mathcal N}$, let
$$
d_{1}(p,q)=\log\frac{1+r}{1-r},
$$
where $r$ is the infimum of the nonnegative numbers $s$ for which
there exists $f\in {\mathcal H}$ such that $f(0)=p$ and $f(s)=q$.
If no such $f\in {\mathcal H}$ exists, then $d_{1}(p,q) =\infty$.

Let
$$
d_{n}(p,q)=\inf\sum_{i=1}^{n}d_{1}(p_{i-1},p_{i})
$$
where the infimum is taken over all chains of points $p_{0}=p,
p_{1},..., p_{n}=q$ in ${\mathcal N}$. Obviously, $d_{n+1}\leq
d_{n}$ for all $n>0$.

\vspace*{10pt}
\begin{definition}[Kobayashi's metric]
The Kobayashi pseudo-metric $d_{K}=d_{K, {\mathcal N}}$ is defined
as
$$
d_{K} (p,q)=\lim_{n\to\infty}d_{n}(p,q), \quad p, q\in {\mathcal
N}.
$$
\end{definition}

In general, it is possible that $d_K$ is identically equal to $0$,
which is the case for example if ${\mathcal N}=\CP$.

Another way to describe $d_{K}$ is the following. Let the
Poincar\'e metric on the unit disk $\Delta$ be given by
$$
\rho_{\Delta} (z,w) =  \log
\frac{1+\frac{|z-w|}{|1-\overline{z}w|}}{1-\frac{|z-w|}{|1-\overline{z}w|}},
\quad  z, w\in \Delta.
$$
Then $d_{K}$ is the largest pseudo metric on ${\mathcal N}$ such
that
$$
d_{K} (f(z), f(w)) \leq  \rho_{\Delta}$$ for all $z {\rm \ and \ }
w \in \Delta$ and for all holomorphic maps $f$ from $\Delta$ into
${\mathcal N}.$ The following is a consequence of this property.

\vspace*{10pt}
\begin{proposition}
Suppose ${\mathcal N}$ and ${\mathcal N}'$ are two complex
manifolds and $F: {\mathcal N}\to {\mathcal N}'$ is holomorphic.
Then
$$
d_{K, {\mathcal N}'} (F(p), F(q)) \leq d_{K, {\mathcal N}} (p,q).
$$
\end{proposition}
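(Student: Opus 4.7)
The plan is to unwind the definition of $d_K$ one layer at a time, showing that $F$ is a contraction first for $d_1$, then for $d_n$, and finally for the limit $d_K$. Each step is immediate from the fact that composition with a holomorphic map sends $\mathcal{H}(\Delta, \mathcal{N})$ into $\mathcal{H}(\Delta, \mathcal{N}')$, so there is no real obstacle; the argument is essentially formal.

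First I would establish the one-step inequality $d_{1, \mathcal{N}'}(F(p), F(q)) \leq d_{1, \mathcal{N}}(p,q)$. Given any $f \in \mathcal{H}(\Delta, \mathcal{N})$ with $f(0)=p$ and $f(s)=q$ for some $s \in [0,1)$, the composition $F \circ f$ belongs to $\mathcal{H}(\Delta, \mathcal{N}')$ and satisfies $(F \circ f)(0) = F(p)$, $(F \circ f)(s) = F(q)$. Taking the infimum over admissible $s$ on the $\mathcal{N}$ side produces an infimum over a (potentially larger) admissible set on the $\mathcal{N}'$ side, so the infimum $r'$ on the target satisfies $r' \leq r$, and since $t \mapsto \log\frac{1+t}{1-t}$ is increasing on $[0,1)$, we obtain $d_{1, \mathcal{N}'}(F(p), F(q)) \leq d_{1, \mathcal{N}}(p,q)$. (If no admissible $f$ exists on the source side, the right-hand side is $\infty$ and there is nothing to prove.)

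Next I would promote this to chains. Given any chain $p_0 = p, p_1, \ldots, p_n = q$ in $\mathcal{N}$, the image chain $F(p_0), F(p_1), \ldots, F(p_n)$ is a legitimate chain in $\mathcal{N}'$ from $F(p)$ to $F(q)$. By the one-step inequality applied termwise,
\[
\sum_{i=1}^{n} d_{1, \mathcal{N}'}(F(p_{i-1}), F(p_i)) \leq \sum_{i=1}^{n} d_{1, \mathcal{N}}(p_{i-1}, p_i).
\]
Taking the infimum over chains on the left side certainly does not exceed the infimum over only the images of chains from the right; hence $d_{n, \mathcal{N}'}(F(p), F(q)) \leq d_{n, \mathcal{N}}(p,q)$ for every $n \geq 1$.

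Finally, passing to the limit $n \to \infty$ (which exists by monotonicity $d_{n+1} \leq d_n$ noted in the text) yields
\[
d_{K, \mathcal{N}'}(F(p), F(q)) = \lim_{n \to \infty} d_{n, \mathcal{N}'}(F(p), F(q)) \leq \lim_{n \to \infty} d_{n, \mathcal{N}}(p, q) = d_{K, \mathcal{N}}(p, q),
\]
which is the desired inequality. There is no hard step here; the only thing worth emphasizing is that the infimum in the definition of $d_1$ is only over $s \geq 0$, so the monotonicity of $\log\frac{1+r}{1-r}$ is what converts the inclusion of admissible parameters into the inequality on distances.
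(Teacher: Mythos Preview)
Your argument is correct. The paper does not actually write out a proof of this proposition; it simply states that the contraction property ``is a consequence'' of the characterization of $d_K$ as the \emph{largest} pseudo-metric on $\mathcal{N}$ satisfying $d_K(f(z),f(w)) \leq \rho_\Delta(z,w)$ for every holomorphic $f:\Delta\to\mathcal{N}$. The implicit argument there is: pull back $d_{K,\mathcal{N}'}$ via $F$ to get a pseudo-metric $d(p,q):=d_{K,\mathcal{N}'}(F(p),F(q))$ on $\mathcal{N}$; since $F\circ f$ is holomorphic whenever $f$ is, this $d$ satisfies the same Schwarz-type bound, and maximality of $d_{K,\mathcal{N}}$ forces $d\leq d_{K,\mathcal{N}}$. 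Your route is instead to work directly from the constructive definition, propagating the inequality through $d_1$, then $d_n$, then the limit. Both approaches are standard and equally short; yours has the minor advantage of being self-contained (it does not appeal to the maximality characterization, which the paper states but does not prove), while the paper's one-line appeal to maximality is slicker once that characterization is granted.
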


\vspace*{10pt}
\begin{lemma}~\label{Kg}
Suppose ${\mathcal B}$ is a complex Banach space with norm
$||\cdot||$. Let ${\mathcal N}$ be the unit ball of ${\mathcal B}$
and let $d_{K}$ be the Kobayashi's metric on ${\mathcal N}$. Then
$$
d_{K} (0, {\bf v}) =  \log \frac{1+||{\bf v}||}{1-||{\bf v}||} = 2
\tanh^{-1} ||{\bf v}||, \quad \forall \; {\bf v}\in {\mathcal N}.
$$
\end{lemma}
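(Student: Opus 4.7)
The plan is to prove matching upper and lower bounds, with $\mathbf{v}=0$ handled as a trivial case. For the upper bound, when $\mathbf{v}\neq 0$ I would exhibit the explicit holomorphic disk $f: \Delta\to {\mathcal N}$ defined by $f(z)=z\,\mathbf{v}/||\mathbf{v}||$. Since $||f(z)||=|z|<1$ this map lands in ${\mathcal N}$, and $f(0)=0$, $f(||\mathbf{v}||)=\mathbf{v}$. The definition of $d_1$ then gives $d_1(0,\mathbf{v})\leq \log\frac{1+||\mathbf{v}||}{1-||\mathbf{v}||}$, and since the sequence $d_n$ is non-increasing we have $d_K\leq d_1$, which yields the upper bound.

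For the matching lower bound I would invoke the Hahn-Banach theorem to produce a continuous linear functional $L:{\mathcal B}\to\mathbb{C}$ with $||L||=1$ and $L(\mathbf{v})=||\mathbf{v}||$. Because $||L||=1$, for every $\mathbf{w}\in{\mathcal N}$ we have $|L(\mathbf{w})|\leq ||\mathbf{w}||<1$, so $L$ restricts to a holomorphic map ${\mathcal N}\to\Delta$. Applying the distance-decreasing property of the Kobayashi pseudo-metric (the preceding proposition) to this $L$ yields
$$
d_{K,{\mathcal N}}(0,\mathbf{v})\ \geq\ d_{K,\Delta}(L(0),L(\mathbf{v}))\ =\ d_{K,\Delta}(0,||\mathbf{v}||).
$$

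To finish I would verify that $d_{K,\Delta}$ coincides with the Poincar\'e metric $\rho_\Delta$ on the disk. The inequality $d_{K,\Delta}\leq \rho_\Delta$ follows by feeding the identity $\Delta\to\Delta$ into the definition of $d_1$. For the reverse inequality I would use the Schwarz-Pick lemma: every holomorphic $f:\Delta\to\Delta$ is a $\rho_\Delta$-contraction, so for any admissible chain $0=p_0,\ldots,p_n=||\mathbf{v}||$ in $\Delta$, each term $d_1(p_{i-1},p_i)$ dominates $\rho_\Delta(p_{i-1},p_i)$; summing and applying the triangle inequality for $\rho_\Delta$ gives $\sum d_1(p_{i-1},p_i)\geq \rho_\Delta(0,||\mathbf{v}||)$. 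Taking the infimum over chains and the limit in $n$ yields $d_{K,\Delta}(0,||\mathbf{v}||)\geq \rho_\Delta(0,||\mathbf{v}||)=\log\frac{1+||\mathbf{v}||}{1-||\mathbf{v}||}$, using the formula for $\rho_\Delta$ recorded earlier in the section. The linchpin of the argument is the Hahn-Banach step, which converts the abstract Banach-space question into the classical Schwarz-Pick estimate on the unit disk; everything else is either the explicit construction of the radial disk or a standard consequence of Schwarz's lemma.
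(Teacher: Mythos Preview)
Your proposal is correct and follows essentially the same route as the paper: the radial disk $f(z)=z\,\mathbf v/\|\mathbf v\|$ for the upper bound and a norming Hahn--Banach functional $L$ for the lower bound, combined with the distance-decreasing proposition. The only difference is that you spell out why $d_{K,\Delta}=\rho_\Delta$ via Schwarz--Pick, whereas the paper simply asserts this identification parenthetically and moves on.
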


\begin{proof}
Pick a point ${\bf v}$ in ${\mathcal N}$. The linear function
$f(c)=c {\bf v}/||{\bf v}||$ maps the unit disk $\Delta$ into the
unit ball ${\mathcal N}$, and takes $||{\bf v}||$ into ${\bf v}$,
and $0$ into ${\bf 0}$. Therefore
$$
d_{K} (0,{\bf v})\leq \rho_{\Delta}(0,||{\bf v}||),
$$
where $\rho_{\Delta}$ is the Kobayashi's metric on $\Delta$ (it
coincides with the Poincar\'e metric on $\Delta$).

On the other hand, by the Hahn-Banach theorem, there exists a
continuous linear function $L$ on ${\mathcal N}$ such that $L({\bf
v})=||{\bf v}||$ and $||L||=1$. Thus, $L$ maps ${\mathcal N}$ into
the unit disk $\Delta$, and so
$$
d_{K} (0,{\bf v})\geq \rho_{\Delta}(0,||{\bf v}||).
$$
Therefore,
$$
d_{K}(0,{\bf v})=\rho_{\Delta}(0,||{\bf v}||)= \log \frac{1+||{\bf
v}||}{1-||{\bf v}||} = 2 \tanh^{-1}||{\bf v}||.
$$
\end{proof}

\section{\tes\ and Kobayashi's metrics on $T(R)$.}

Assume R is a Riemann surface conformal to $\Delta/\Gamma$ where
$\Gamma$ is a discontinuous, fixed point free group of hyperbolic
isometries of $\Delta$. Let ${\mathcal M}={\mathcal M}(\Gamma)$ be
the unit ball of the complex Banach space of all ${\mathcal
L}^{\infty}$ functions defined on $\Delta$ satisfying the
$\Gamma$-invariance property:
\begin{equation}~\label{Beltinvariance}
\mu(\gamma(z))\frac{\overline{\gamma'(z)}}{\gamma'(z)}=\mu(z)
\end{equation}
for all $z$ in $\Delta$ and all $\gamma$ in $\Gamma$. An element
$\mu\in {\mathcal M}$ is called a Beltrami coefficient on $R$.
Points of the Teichm\"uller space $T=T(R)$ are represented by
equivalence classes of Beltrami coefficients $\mu\in {\mathcal
M}$. Two Beltrami coefficients $\mu, \nu\in {\mathcal M}$ are in
the same \te\ equivalence class if the quasiconformal self maps
$f^{\mu}$ and $f^{\nu}$ which preserve $\Delta$ and which are
normalized to fix $0,i$ and $-1$ on the boundary of the unit disk
coincide at all boundary points of the unit disk.

\vspace*{10pt}
\begin{definition}[\tes\ metric]
For two elements $[\mu]$ and $[\nu]$ of $T(R)$, \tes\  metric is
equal to
$$
d_{T}([\mu],[\nu])=\inf \log K(f^{\mu}\circ (f^{\nu})^{-1}),
$$
where the infimum is over all $\mu$ and $\nu$ in the equivalence
classes $[\mu]$ and $[\nu]$, respectively. In particular,
$$
d_{T}(0,[\mu])=\log \frac{1+k_{0}}{1-k_{0}}
$$
where $k_{0}$ is the minimal value of $||\mu||_{\infty}$, where
$\mu$ ranges over the \te\ class $[\mu]$.
\end{definition}

\vspace*{10pt}
\begin{lemma}\label{easy}
Let $d_{K}$ and $d_{T}$ be Kobayashi's and \tes\ metrics of
$T(R)$.  Then $d_{K}\leq d_{T}$.
\end{lemma}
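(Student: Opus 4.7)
The plan is to exhibit, for each pair of points $[\mu], [\nu] \in T(R)$, a holomorphic map $F \colon \Delta \to T(R)$ whose Poincar\'e distance between two marked preimages is exactly $d_T([\mu], [\nu])$. Since $d_K$ is by definition the largest pseudo-metric contracted by all such holomorphic maps, this immediately yields $d_K([\mu], [\nu]) \leq d_T([\mu], [\nu])$.

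First I would reduce to the case where one of the points is the base point $[0]$. The right translation $r_\nu \colon T(R) \to T(R^\nu)$ coming from the change of basepoint by the quasiconformal deformation $f^\nu$ is a biholomorphic isomorphism, and in particular it is an isometry for both the Kobayashi and the Teichm\"uller metrics (the Teichm\"uller-metric part follows directly from the chain rule applied to the definition $d_T = \inf \log K(f^\mu \circ (f^\nu)^{-1})$). Thus it suffices to prove $d_K([0], [\mu]) \leq d_T([0], [\mu])$ for every $[\mu] \in T(R)$.

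Second, fix $[\mu]$ and let $k_0 = \inf\{\|\mu_0\|_\infty : \mu_0 \in [\mu]\}$, so that $d_T([0], [\mu]) = \log\frac{1+k_0}{1-k_0}$. For any $\epsilon > 0$, pick a representative $\mu_0 \in [\mu]$ with $\|\mu_0\|_\infty < k_0 + \epsilon$, and define
$$
F(c) = \Bigl[\,\tfrac{c}{\|\mu_0\|_\infty}\,\mu_0\,\Bigr], \qquad c \in \Delta.
$$
The assignment $c \mapsto \frac{c}{\|\mu_0\|_\infty}\mu_0$ is a linear (hence holomorphic) map of $\Delta$ into the unit ball $\mathcal M(\Gamma)$ of $\Gamma$-invariant Beltrami differentials, and the natural projection $\pi \colon \mathcal M(\Gamma) \to T(R)$ is holomorphic (this is the standard consequence of the Ahlfors--Bers theorem on the holomorphic dependence of normalized quasiconformal maps on parameters, combined with Theorem~\ref{hol} applied to the universal holomorphic motion over $\mathcal M(\Gamma)$). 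Therefore $F \colon \Delta \to T(R)$ is holomorphic with $F(0) = [0]$ and $F(\|\mu_0\|_\infty) = [\mu]$.

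Third, applying the contraction property of $d_K$ under $F$,
$$
d_K([0], [\mu]) \;=\; d_K\bigl(F(0), F(\|\mu_0\|_\infty)\bigr) \;\leq\; \rho_\Delta\bigl(0, \|\mu_0\|_\infty\bigr) \;=\; \log\frac{1+\|\mu_0\|_\infty}{1-\|\mu_0\|_\infty} \;<\; \log\frac{1+k_0+\epsilon}{1-k_0-\epsilon}.
$$
Letting $\epsilon \to 0$ gives $d_K([0],[\mu]) \leq \log\frac{1+k_0}{1-k_0} = d_T([0],[\mu])$, completing the proof. The only non-trivial ingredient is the holomorphy of $\pi$, which I would simply cite; everything else is a one-line application of the holomorphic disk $c \mapsto \pi(c\mu_0/\|\mu_0\|_\infty)$.
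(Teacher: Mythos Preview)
Your proof is correct and follows essentially the same route as the paper: reduce to the base point by the biholomorphic right translation, then use the holomorphic disk $c \mapsto [c\mu/\|\mu\|_\infty]$ together with the contracting property of $d_K$. The only cosmetic difference is that the paper invokes the existence of an extremal representative (via normal families) to get $\|\mu\|_\infty = k_0$ on the nose, whereas you bypass this with an $\epsilon$-approximation and a limit; both are equally valid.
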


\begin{proof}
Let a Beltrami coefficient $\mu$ satisfying (\ref{Beltinvariance})
be extremal in its class and $||\mu||_{\infty}=k$. This is
possible because by normal families argument every class possesses
at least one extremal representative. By the definition of \tes\
metric
$$
d_{T}(0,[\mu])=\log \frac{1+k}{1-k}.
$$
For such a $\mu$, let $g(c)=[c\mu/k]$. Then $g(c)$ is a
holomorphic function of $c$ for $|c|<1$ with values in the \te\
space $T(R)$, $g(0)=0$ and $g(k)=[\mu]$. Hence
$$
d_{K}(0, [\mu])\leq d_{1}(0, [\mu])\leq d_{T}(0,[\mu]).
$$
Now the right translation mapping $\alpha([f^{\mu}])=[f^{\mu}\circ
(f^{\nu})^{-1}]$ is biholomorphic, so it is an isometry in
Kobayashi's metric. We also know that it is an isometry in
Teichmuller's metric. Therefore, the inequality
$$
d_{K}([\nu],[\mu])\leq d_{1}([\nu],[\mu])\leq
d_{T}([\nu],[\mu])
$$
holds for an arbitrary pair of points $[\mu]$
and $[\nu]$ in the \te\ space $T(R)$.
\end{proof}

In order to describe holomorphic maps into $T(R)$ we will use the
Bers' embedding by which $T(R)$ is realized as a bounded domain in
the Banach space ${\mathcal B}(R)$ of equivariant cusp forms. Here
${\mathcal B}(R)$ consists of the functions $\varphi$ holomorphic
in $\Delta^c$ for which
$$
\sup_{z \in \Delta^c}\{|(|z|^2-1)^2|\varphi(z)|\}<\infty
$$
and for which
$$
\varphi(\gamma(z))(\gamma'(z))^2 = \varphi(z) {\rm \ for \ all \ } \gamma \in \Gamma.
$$

We assume $\Gamma$ is a Fuchsian covering group such that
$\Delta/\Gamma$ is conformal to $R$. For any Beltrami differential
$\mu$ supported on $\Delta$, we let $w^{\mu}$ be the
quasiconformal self-mapping of $\overline{\mathbb C}$ which fixes
$1,i$ and $-1$ and which has Beltrami coefficient $\mu$ in
$\Delta$ and Beltrami coefficient identically equal to zero in
$\Delta^c$. Let $w^{\mu}$ restricted to $\Delta^c$ be equal to the
Riemann mapping $g^{\mu}$. Then $g^{\mu}$ has the following
properties:
\begin{itemize}
\item[a)] $g^{\mu}$ fixes the points $1, i$ and $-1$,
\item[b)] $g^{\mu}(\partial \Delta)$ is a quasiconformal image of the
circle $\partial \Delta$,
\item[c)] $g^{\mu}$ is univalent and holomorphic in $\Delta^c$.
\item[d)] $g^{\mu}\circ \gamma \circ (g^{\mu})^{-1}$ is equal to a M\"obius
transformation $\tilde{\gamma}$, for all $\gamma$ in $\Gamma$, and
\item[e)] $g^{\mu}$ determines and is determined uniquely by the
corresponding point in $T(R)$.
\end{itemize}

The Bers' embedding maps the \te\ equivalence class of $\mu$ to
the Schwarzian derivative of $g^{\mu}$ where the Schwarzian
derivative of a $C^3$ function $g$ is defined by
$$
S(g)=\left(\frac{g''}{g'}\right)'  + \frac{1}{2}
\left(\frac{g''}{g'}\right)^2.
$$

In the next section we use this realization of the complex
structures to prove that $d_T \leq d_K$.

\section{The Lifting Problem}

Let $\Phi$ be the natural map from the space ${\mathcal M}$ of
Beltrami differentials on $R$ onto $T(R)$ and let $f$ be a
holomorphic map from the unit disk into $T(R)$ with $f(0)$ equal
to the base point of $T(R)$. The lifting problem is the problem of
finding a holomorphic map $\tilde{f}$ from $\Delta$ into
${\mathcal M}$, such that $\tilde{f}(0)=0$ and $\Phi \circ
\tilde{f} = f$.

In this section we  prove the theorem of Earle, Kra and
Krushkal~\cite{EKK} which says that the lifting problem always has
a solution. We follow their technique which relies on proving an
equivariant version of Slodkowski's extension theorem and then
going on to show that the positive solution to the lifting problem
implies $d_T \leq d_K$ for every Riemann surface that has a
nontrivial \te\ space with complex structure.

\vspace*{10pt}
\begin{theorem}[An equivariant version of Slodkowski's extension theorem]
Let $h(c,z)$ be a holomorphic motion of
$\Delta^c=\overline{\mathbb C} \setminus \Delta$ parametrized by
$\Delta$ and with base point $0$ and let $\Gamma$ be a
torsion-free group of M\"obius transformations mapping $\Delta^c$
onto itself. Suppose for each $\gamma\in \Gamma$ and $c\in\Delta$
there is a M\"obius transformation $\tilde{\gamma}_{c}$ such that
$$
h(c,\gamma(z))=\tilde{\gamma}_{c}(h(c,z)), \;\; \forall \; z\in
\Delta^{c}.
$$
Then $h(c,z)$ can be extended to a holomorphic motion $H(c,z)$ of
$\C$ parametrized by $\Delta$ and with base point $0$ in such a
way that
$$
H(c,\gamma(z))=\tilde{\gamma}_{c}(H(c,z))
$$
holds for $\gamma\in\Gamma$, $c\in\Delta$ and $z\in \C$.
\end{theorem}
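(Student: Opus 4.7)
The plan is to reduce the equivariant extension problem to Slodkowski's theorem (Theorem~\ref{fullextension}) combined with a modification of the extension on $\Delta$ that enforces $\Gamma$-equivariance while keeping the extension fixed on $\Delta^c$.

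First I apply Theorem~\ref{fullextension} to $h$, ignoring equivariance, to obtain a holomorphic motion $\hat H: \Delta\times\C \to \C$ extending $h$. For each fixed $c\in\Delta$, the map $w^c := \hat H(c,\cdot)$ is a quasiconformal self-homeomorphism of $\C$ with Beltrami coefficient $\mu^c$, and by Theorem~\ref{hol} the map $c \mapsto \mu^c$ is holomorphic into the unit ball of ${\mathcal L}^\infty(\C)$. Since $\hat H$ restricts to $h$ on $\Delta^c$ and $\tilde\gamma_c$ is M\"obius, differentiating the identity $h(c,\gamma z)=\tilde\gamma_c(h(c,z))$ in $z$ and invoking the chain rule for Beltrami coefficients gives
$$
\mu^c(\gamma z) \frac{\overline{\gamma'(z)}}{\gamma'(z)} = \mu^c(z), \quad z\in\Delta^c,\ \gamma\in\Gamma,
$$
so that $\mu^c$ is automatically $\Gamma$-invariant on $\Delta^c$, although in general not on $\Delta$.

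The main step is to replace $\mu^c|_\Delta$ by a $\Gamma$-invariant Beltrami coefficient $\tilde\mu^c$ on $\Delta$ that depends holomorphically on $c$ and whose associated normalized quasiconformal map of $\C$ still agrees with $w^c$ on $\partial\Delta$. Geometrically, $D_c^* = w^c(\Delta)$ is a quasidisk on which the M\"obius group $\tilde\Gamma_c=\{\tilde\gamma_c\}_{\gamma\in\Gamma}$ acts, with quotient quasiconformally equivalent to $R=\Delta/\Gamma$. Any $\Gamma$-equivariant quasiconformal homeomorphism $\Delta\to D_c^*$ extending the quasisymmetric conjugation $w^c|_{\partial\Delta}$ exists by classical Teichm\"uller theory, and pulling back the standard complex structure of $D_c^*$ produces such a $\tilde\mu^c$. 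To achieve holomorphic dependence on $c$ one must build the family of equivariant extensions functorially in $c$, for instance via a conformally natural (Douady--Earle type) extension of the boundary correspondence, exploiting that the boundary data $w^c|_{\partial\Delta}$ depends holomorphically on $c$ by Theorem~\ref{hol}.

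Given $\tilde\mu^c$, I define $H(c,\cdot)$ to be the normalized quasiconformal solution of the Beltrami equation with coefficient equal to $\tilde\mu^c$ on $\Delta$ and $\mu^c$ on $\Delta^c$, obtained via the Ahlfors--Bers measurable Riemann mapping theorem, and post-compose by a M\"obius transformation depending holomorphically on $c$ so that $H(c,\cdot)$ agrees with $h(c,\cdot)$ on $\Delta^c$. The $\Gamma$-invariance of the resulting Beltrami coefficient on all of $\C$ then forces $H(c,\gamma z)=\tilde\gamma_c(H(c,z))$ for every $z\in\C$. The principal obstacle in this approach is the holomorphic dependence of $\tilde\mu^c$ on $c$, i.e.\ constructing the equivariant extension on $\Delta$ functorially in the parameter; once that is established, the remaining verifications reduce to the parametric Ahlfors--Bers theorem and the tools already developed in the paper.
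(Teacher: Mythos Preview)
Your approach is genuinely different from the paper's, and the gap you yourself flag is real and not easily closed by the tool you suggest. The Douady--Earle barycentric extension is conformally natural (hence $\Gamma$-equivariant) and depends real-analytically on the boundary data, but it is \emph{not} in general holomorphic in a complex parameter: the barycenter construction involves the modulus of the boundary map in a way that destroys holomorphicity. So ``exploiting that the boundary data depends holomorphically on $c$'' does not straightforwardly yield a holomorphic family $c\mapsto\tilde\mu^c$. The Bers--Royden harmonic-Beltrami extension does give holomorphic dependence, but only over a subdisk of radius $1/3$, and patching these local pieces into a global holomorphic section over all of $\Delta$ is itself a nontrivial problem---essentially the very lifting problem this theorem is meant to solve. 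Your route therefore risks circularity or, at minimum, imports machinery well beyond what the paper develops.

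The paper's proof avoids all of this by a much more elementary device: it never modifies a Beltrami coefficient. Instead it picks a point $w\in\Delta$, uses (non-equivariant) Slodkowski to extend the motion to $\Delta^c\cup\{w\}$, and then \emph{forces} equivariance by defining $h(c,\gamma(w))=\tilde\gamma_c(h(c,w))$ on the entire $\Gamma$-orbit of $w$. The work is then purely verificational: one checks that this orbit-extension is still injective in $z$ (using that $\Gamma$ is torsion-free, so $\tilde\gamma_c$ has its fixed points on $h(c,\partial\Delta)$) and holomorphic in $c$ (using that $\tilde\gamma_c$ is determined by the images of three boundary points, hence depends holomorphically on $c$). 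Iterating over a countable set of $w$'s with dense orbit gives an equivariant motion of a dense set, and the $\lambda$-lemma closes it up. This orbit-by-orbit construction uses nothing beyond Slodkowski's theorem and elementary properties of M\"obius maps, which is exactly why it succeeds where the Beltrami-coefficient approach stalls.
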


\begin{proof}
Observe that $\tilde{\gamma}_{c}$ is uniquely determined for all
$c\in\Delta$ because $\Delta^c$ contains more than two points. To
extend $h(c,z)$ to $\Delta$, start with an point $w\in \Delta$. By
Theorem~\ref{hmt}, the motion $h(c,z)$ can be extended to a
holomorphic motion (still denote it as $h(c,z)$) of the closed set
$\Delta^{c}\cup \{w\}$. Furthermore, we may extend it to the orbit
of $w$ using the $\Gamma$-invariant property:
$$
h(t,\gamma(w))=\tilde{\gamma}_{c}(h(c,w)),
$$ for all
$\gamma\in\Gamma$. Since every $\gamma\in\Gamma$ is fixed point
free on $\Delta$, the motion $h(c,z)$ is well defined and
satisfies the $\Gamma$-invariant property for all $c\in\Delta$ and
all $z$ in the set
$$
E=\{\gamma(w):\gamma\in\Gamma\}\cup (\C\setminus \Delta).
$$

So we only need to show that $h(c,z)$ is a holomorphic motion of
$E$. Observe first that $h(0,z)=z$ since
$\tilde{\gamma}_{0}=\gamma$ for all $\gamma\in\Gamma$. To show
$h(c,z)$ is injective for all fixed $c\in \Delta$, suppose
$h(c,z_{1})=h(c,z_{2})$ for some $c\in\Delta$. Since $h(c,z)$ is
injective on $\Delta^{c}\cup \{w\}$, we may assume that
$z_{1}=g(w)$ for some $g\in \Gamma$. By $\Gamma$-invariant
property,
$$
h(c,w)=(\tilde{g}_{c})^{-1}(h(c,z_{1})).
$$
Thus,
$$
h(c,w)=(\tilde{g}_{c})^{-1}(h(c,z_{2}))=h(c,g^{-1}(z_{2})),
$$
and we conclude that $z_{2}$ belongs to the $\Gamma$-orbit of $w$.
Let $z_{2}=\beta(w)$ for some $\beta\in\Gamma$. Then
$$
h(c,w)=\tilde{\gamma}_{c}(h(c,w)),
$$
where $\gamma=g^{-1}\circ\beta$. Therefore $h(c,w)$ is a fixed
point of $\tilde{\gamma}_{c}$. On the other hand, since $\gamma$
is a hyperbolic M\"obius transformation, $\tilde{\gamma}_{c}$ is
also hyperbolic, so unless $\tilde{\gamma}_{c}$ is identity, it
can only fix points on the set $h(c,\partial \Delta))$. Hence
$\gamma$ is the identity map and $z_{1}=z_{2}$.

Finally, we will show that $l:c\to h(c,z)$ is holomorphic for any
fixed $z\in E$. we may assume $z=g(w)$, $g\in\Gamma\setminus
\{identity\}$. Then $l(c)=h(c,g(w))=\tilde{g}_{c}(h(c,w))$. Since
$c\to h(c,w)$ is holomorphic and $\tilde{g}_{c}$ is a M\"obius
transformation, it is enough to prove the map $k:c\to
\tilde{g}_{c}(\zeta)$ is holomorphic for any fixed $\zeta$.
Applying the $\Gamma$-invariant property to the three points
$0,1,\infty$, we obtain
$$
\tilde{g}_{c}(0)=h(c,g(0)),
$$
$$
\tilde{g}_{c}(1)=h(c,g(1)),
$$
$$
\tilde{g}_{c}(\infty)=h(c,g(\infty)).
$$
The right-hand sides of these three equations are holomorphic, so
the maps $c\mapsto\tilde{g}_{c}(0)$, $c\mapsto\tilde{g}_{c}(1)$
and $c\mapsto\tilde{g}_{c}(\infty)$ are holomorphic. Since
$\tilde{g}_{c}$ is a M\"obius transformation, $k: c\to
\tilde{g}_{c}(\zeta)$ is holomorphic.

Therefore, we have extended $h(c,z)$ to a holomorphic motion of
$$\Delta^c \cup \{the \ \Gamma \ orbit \ of \ z\}.$$ By repeating
this extension process to a countable set of points whose $\Gamma$
orbits are dense in $\Delta,$ we obtain the extension $H(c,z)$ of
$h(c,z)$ with the property that
$$
H(c,\gamma(z))=\tilde{\gamma}_{c}(H(c,z))
$$
for all $\gamma\in\Gamma$, $c\in\Delta$ and $z\in \C.$
\end{proof}

This equivariant version of Slodkowski's extension theorem leads
almost immediately to the following lifting theorem.

\vspace*{10pt}
\begin{theorem}[The lifting theorem]
If $f:\Delta\to T(R)$ is holomorphic, then there exists a
holomorphic map $\tilde{f} : \Delta \to {\mathcal M}$ such that
$$
\Phi\circ \tilde{f}=f.
$$
If $\mu_{0}\in {\mathcal M}$ and $\Phi(\mu_0)=f(0)$, we can choose
$\tilde{f}$ such that $\tilde{f}(0)=\mu_{0}$.
\end{theorem}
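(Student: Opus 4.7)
The plan is to lift $f$ via the Bers embedding into a family of univalent functions on $\Delta^c$, recognize this family as a $\Gamma$-equivariant holomorphic motion, and then apply the equivariant Slodkowski extension theorem just established. Without loss of generality, I would first reduce to the case $f(0)=0$ (the base point of $T(R)$) by pre-composing with the biholomorphic right-translation on $T(R)$, and afterwards undo this composition at the normalization step.

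First, using the Bers embedding and property (e) of $g^{\mu}$ from the previous section, I associate to each $c\in\Delta$ a univalent function $g^{c}$ on $\Delta^c$ whose Schwarzian $\varphi_c=S(g^c)\in\mathcal{B}(R)$ depends holomorphically on $c$ and which satisfies $g^c\circ\gamma\circ (g^c)^{-1}=\tilde{\gamma}_c$ for a M\"obius transformation $\tilde{\gamma}_c$ and every $\gamma\in\Gamma$. I then verify that $h(c,z)=g^c(z)$ is a holomorphic motion of the closed set $\Delta^c$ parametrized by $\Delta$ with base point $0$: injectivity of $z\mapsto h(c,z)$ follows from univalence of $g^c$; holomorphy of $c\mapsto g^c(z)$ for fixed $z$ follows by recovering $g^c$ from $\varphi_c$ through the standard Schwarzian ODE, whose solutions depend holomorphically on the coefficients; and $h(0,z)=z$ holds by the reduction $f(0)=0$, so that $g^0=\mathrm{id}$.

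Next, the equivariance identity $h(c,\gamma(z))=\tilde{\gamma}_c(h(c,z))$ holds by construction, so the equivariant Slodkowski extension theorem yields a holomorphic motion $H(c,z)$ of $\C$ satisfying the same equivariance. For each $c\in\Delta$, Theorem~\ref{hmt} guarantees that $z\mapsto H(c,z)$ is quasiconformal on $\C$, and the $\Gamma$-equivariance of $H$ translates directly into the invariance property~(\ref{Beltinvariance}) for the Beltrami coefficient $\mu(c,\cdot)$ of $H(c,\cdot)$ restricted to $\Delta$. I therefore define $\tilde{f}(c)=\mu(c,\cdot)|_\Delta\in\mathcal{M}$. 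By Theorem~\ref{hol}, the map $\tilde{f}:\Delta\to\mathcal{M}$ is holomorphic, and since $H(c,\cdot)|_{\Delta^c}=g^c$, the Bers embedding characterization of $T(R)$ gives $\Phi(\tilde{f}(c))=f(c)$.

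Finally, to arrange $\tilde{f}(0)=\mu_0$ for a prescribed $\mu_0$ with $\Phi(\mu_0)=f(0)$, I would undo the right-translation performed in the first step by post-composing the quasiconformal maps $w^{\tilde{f}(c)}$ with $w^{\mu_0}$ and applying the classical Beltrami coefficient composition formula; holomorphic dependence is preserved and the initial value is adjusted to $\mu_0$. I expect the main obstacle to be the technical step of verifying that $c\mapsto g^c$ is a holomorphic motion in the precise sense used in this paper, namely that holomorphy of $c\mapsto S(g^c)$ into $\mathcal{B}(R)$ implies holomorphy of $c\mapsto g^c(z)$ for every fixed $z\in\Delta^c$. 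Once this dependence is established through the Schwarzian ODE, everything else is a direct application of the equivariant Slodkowski theorem and Theorem~\ref{hol}.
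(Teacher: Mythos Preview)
Your proposal is correct and follows essentially the same route as the paper: reduce to $f(0)=0$ via the right-translation, realize $f$ through the Bers embedding as a $\Gamma$-equivariant holomorphic motion of $\Delta^c$, extend by the equivariant Slodkowski theorem, and take the Beltrami coefficient of the extension as $\tilde f$. Your treatment is in fact more explicit than the paper's on two points it glosses over---the verification that $c\mapsto g^c(z)$ is holomorphic via the Schwarzian ODE, and the recovery of the prescribed initial value $\mu_0$ through the Beltrami composition formula.
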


\begin{proof}
By using the translation mapping $\alpha$ of the \te\ space given
by
$$
\alpha([w^{\mu}])=[w^{\mu}\circ (w^{\nu})^{-1}],
$$
we may assume $f(0)=0$. For each $c\in\Delta$, let $g(c,\cdot)$ be
a meromorphic function whose Schwarzian derivative is $f(c)$. Then
on $\C\setminus \Delta$ the map $g(c,\cdot)$ is injective, and we
can specify $g(c,\cdot)$ uniquely by requiring that it fix $1,i$
and $-1.$ Thus $g(0,z)=z$. It is easy to verify that
$$
g(c,z):\Delta \times (\C\setminus \Delta)\to \C
$$
is a holomorphic motion. For every $\gamma\in\Gamma$ and
$c\in\Delta$, there exists a M\"obius transformation
$\tilde{\gamma}_{c}$ such that
$$
g(c,\gamma(z))=\tilde{\gamma}_{c}(g(c,z)).
$$
Using the equivalent version of Slodkowski's extension theorem, we
extend $g$ to a $\Gamma$-invariant holomorphic motion (still
denote it as $g$) of $\C$. For each $c\in\Delta$, let
$\tilde{f}(c)$ be the complex dilatation
$$
\tilde{f}(c)=\frac{g_{\overline{z}}}{g_{z}}.
$$
Then the $\Gamma$-invariant property of $g$ implies that
$\tilde{f}(c)\in {\mathcal M}$. From Theorem~\ref{hmt} in Section
4, we know that $\tilde{f}(c)$ is a holomorphic function of $c$.
By the definition of the Bers embedding, $\Phi(\tilde{f}(c))$ is
the Schwarzian derivative of $g$. So $\Phi(\tilde{f}(c))=g(c)$.
\end{proof}

Now we will use the lifting theorem to show that the Teichm\'uller
metric and Kobayashi's metric of $T(R)$ coincide.

\vspace*{10pt}
\begin{lemma}~\label{balllemma}
Suppose ${\mathcal M}$ is the unit ball in the space of
essentially bounded Beltrami differentials on a Riemann surface
$R$. Let $d_{K}$ be the Kobayashi's metric on ${\mathcal M}$. Then
$$
d_{K}(\mu,\nu)= 2
\tanh^{-1}\left\|\frac{\mu-\nu}{1-\overline{\nu}\mu}\right\|_{\infty}
$$
for all $\mu$ and $\nu$ in ${\mathcal M}$.
\end{lemma}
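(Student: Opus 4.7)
My plan is to construct a biholomorphic self-map $T_\nu$ of ${\mathcal M}$ that sends $\nu$ to $0$, after which the stated formula will follow immediately from Lemma~\ref{Kg} together with the fact that biholomorphisms are Kobayashi isometries. The natural candidate is the ``pointwise disk automorphism''
$$T_\nu(\mu)(z) = \frac{\mu(z)-\nu(z)}{1-\overline{\nu(z)}\mu(z)}.$$

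The first task is to verify that $T_\nu$ maps ${\mathcal M}$ into itself. Applying pointwise (with $a=\nu(z)$ and $w=\mu(z)$) the sharp estimate
$$\left|\frac{w-a}{1-\bar a w}\right|\leq \frac{|w|+|a|}{1+|w||a|}$$
for the disk automorphism, and using the monotonicity of $(x+y)/(1+xy)$ on $[0,1)$, one obtains
$$\|T_\nu(\mu)\|_\infty \leq \frac{\|\mu\|_\infty+\|\nu\|_\infty}{1+\|\mu\|_\infty\|\nu\|_\infty}<1,$$
so $T_\nu(\mu)$ lies strictly inside the unit ball of ${\mathcal L}^\infty$. When $R$ is presented as $\Delta/\Gamma$, I would also check the equivariance (\ref{Beltinvariance}): the factor $\gamma'(z)/\overline{\gamma'(z)}$ produced by $\mu(\gamma(z))$ appears in the numerator of $T_\nu(\mu)(\gamma(z))$, while the same factor produced by $\nu(\gamma(z))$ enters the denominator through the conjugate and cancels, leaving precisely the required factor on $T_\nu(\mu)(\gamma(z))$.

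Next, $T_\nu$ depends holomorphically on $\mu$, since no conjugate of $\mu$ appears, and an elementary scalar computation with disk automorphisms gives $T_\nu^{-1}=T_{-\nu}$, itself a holomorphic self-map of ${\mathcal M}$ by the same estimates. Hence $T_\nu$ is a biholomorphism of ${\mathcal M}$. Because biholomorphic maps are isometries for the Kobayashi pseudo-metric and $T_\nu(\nu)=0$,
$$d_K(\mu,\nu)=d_K(T_\nu\mu,T_\nu\nu)=d_K(T_\nu\mu,0),$$
and Lemma~\ref{Kg} identifies the right-hand side with $2\tanh^{-1}\|T_\nu(\mu)\|_\infty$, which is precisely the asserted formula.

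The only subtle step is the quantitative M\"obius estimate: the pointwise inequality $|\psi_a(w)|<1$ does not by itself imply $\|T_\nu(\mu)\|_\infty<1$, so the refined bound above is needed to keep $T_\nu(\mu)$ inside the open unit ball ${\mathcal M}$ rather than merely in its closure. Everything else is formal once this estimate and the trivial equivariance check are in hand.
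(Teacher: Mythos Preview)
Your proposal is correct and follows essentially the same approach as the paper: use the pointwise disk automorphism $\mu\mapsto(\mu-\nu)/(1-\overline{\nu}\mu)$ as a biholomorphic self-map of ${\mathcal M}$ sending $\nu$ to $0$, then invoke Lemma~\ref{Kg}. You supply considerably more detail (the sharp M\"obius bound ensuring the image lies in the \emph{open} ball, the equivariance check, the explicit inverse $T_{-\nu}$) than the paper, which simply asserts without verification that the map is a biholomorphic self-map of ${\mathcal M}$.
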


\begin{proof}
From Lemma~\ref{Kg}, for any $\nu\in {\mathcal M}$,
$$
d_{K} (0, \nu) = 2 \tanh^{-1}\left\|\nu\right\|_{\infty}
$$
Observe the function defined by
$$
\lambda \to \frac{\nu-\lambda}{1-\overline{\nu}\lambda}
$$
is a biholomorphic self map of ${\mathcal M}$. Therefore
$$
d_{K}(\mu,\nu)=2 \tanh^{-1}\left\|
\frac{\mu-\nu}{1-\overline{\nu}\mu}\right\|_{\infty}.
$$
\end{proof}

\vspace*{10pt}
\begin{theorem}[\cite{Royden},~\cite{Gardiner3},~\cite{Gardinerbook}]
The \tes\ and Kobayashi's metrics of $T(R)$ coincide.
\end{theorem}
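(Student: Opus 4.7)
The plan is to combine Lemma~\ref{easy}, which already gives $d_K \leq d_T$, with the reverse inequality $d_T \leq d_K$ obtained as a short formal consequence of the lifting theorem. The key observation is that $d_K$ is the \emph{largest} pseudo-metric on $T(R)$ that is contracted by every holomorphic map $\Delta \to T(R)$, so it suffices to check that $d_T$ itself is so contracted: for every holomorphic $f:\Delta \to T(R)$ and every $c \in \Delta$,
$$
d_T(f(0), f(c)) \leq \rho_\Delta(0,c) = \log \frac{1+|c|}{1-|c|}.
$$

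First I would use the right-translation maps $\alpha([w^\mu]) = [w^\mu \circ (w^\nu)^{-1}]$, which are simultaneously biholomorphic self-maps of $T(R)$ and isometries of $d_T$, to reduce to the case $f(0) = 0$. Then I apply the lifting theorem just established to produce a holomorphic map $\tilde f : \Delta \to \mathcal{M}$ with $\tilde f(0) = 0$ and $\Phi \circ \tilde f = f$. At this point Schwarz's lemma in the Banach-space ball $\mathcal{M}$, together with Lemma~\ref{Kg}, gives
$$
2 \tanh^{-1} \|\tilde f(c)\|_\infty = d_K^{\mathcal M}(0, \tilde f(c)) \leq \rho_\Delta(0,c) = 2\tanh^{-1}|c|,
$$
so $\|\tilde f(c)\|_\infty \leq |c|$.

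Since $\tilde f(c)$ is a Beltrami coefficient representing the Teichm\"uller class $f(c) = \Phi(\tilde f(c))$, the very definition of the Teichm\"uller metric forces
$$
d_T(0, f(c)) \leq \log \frac{1 + \|\tilde f(c)\|_\infty}{1 - \|\tilde f(c)\|_\infty} \leq \log \frac{1+|c|}{1-|c|},
$$
which is the desired contraction. Taking the infimum over chains of holomorphic disks in the definition of $d_K$ then gives $d_T \leq d_K$ for every pair $[\mu],[\nu] \in T(R)$, and Lemma~\ref{easy} supplies the opposite inequality, completing the proof.

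The main obstacle in this whole program is the equivariant Slodkowski extension together with the lifting theorem, both already handled in the previous section; once those are in hand, the present theorem is essentially a two-line application of Schwarz's lemma for maps into the unit ball of the Banach space of $\Gamma$-invariant Beltrami differentials. The only delicate point to watch is that the translation isometry $\alpha$ intertwines Kobayashi and Teichm\"uller metrics correctly so that the reduction to $f(0)=0$ is legitimate; this is exactly what was used in the proof of Lemma~\ref{easy}, so no new argument is required.
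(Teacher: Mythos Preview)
Your proposal is correct and follows essentially the same route as the paper: reduce to $f(0)=0$ by right translation, lift $f$ to $\tilde f:\Delta\to\mathcal{M}$ via the lifting theorem, apply Schwarz's lemma in the unit ball $\mathcal{M}$ (the paper cites Lemma~\ref{balllemma}, you cite the equivalent Lemma~\ref{Kg}) to get $\|\tilde f(c)\|_\infty\le |c|$, and conclude $d_T(0,f(c))\le\rho_\Delta(0,c)$. Your phrasing via the ``largest contracted pseudo-metric'' characterization of $d_K$ and the explicit passage through chains is in fact a bit more careful than the paper's terse ``taking the infimum over all such $f$,'' but the substance is identical.
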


\begin{proof}
In Lemma~\ref{easy} we already showed that $d_{K}\leq d_{T}$, So
we only need to prove $d_{K}\geq d_{T}$. Choose a holomorphic map
$f:\Delta\to T(R)$ so that $f(0)=0$ and $f(c)=[\mu]$ for some
$c\in \Delta$. Then the lifting theorem implies there exists a
holomorphic map $\tilde{f}:\Delta\to {\mathcal M}$ so that
$$
\Phi(\tilde{f}(c))=f(c)=[\mu].
$$
So
$$
d_{K} (0,\tilde{f}(c))\leq \rho_{\Delta}(0,c).
$$
By Lemma~\ref{balllemma} and definition of Teichm\"uller metric,
$$
d_{T}(0,[\mu])\leq d_{K} (0,\tilde{f}(c)).
$$
Therefore,
$$
d_{T}(0,[\mu])\leq \rho_{\Delta}(0,c).
$$
Taking the infimum over all such $f$,
we have
$$
d_{T}(0,[\mu])\leq d_{K}(0,[\mu]).
$$
Hence $d_{T}\leq d_{K}$.
\end{proof}

\section{holomorphic motions and the Fatou linearization theorem.}

In this  section we apply holomorphic motions to the Fatou
linearization theorem,~\cite[\S 3]{Jiang3}. For more applications
of holomorphic motions to complex dynamics, we refer the reader
to~\cite{Jiang3},~\cite{Jiang2} and ~\cite{Jiang1}.

\subsection{Parabolic germs.}

Suppose $f(z)$ is a parabolic germ at $0$. This means that after a
change of coordinates, there is a constant $0<r_{0}<1/2$ such that
$f(z)$ is defined in the disk $\Delta_{r_{0}}$ and is conformal
with the Taylor expansion
$$
f(z) = e^{2\pi p i\over q} z+ higher \ order \ terms,  \quad
(p,q)=1.
$$
We also assume $f^{m}\not\equiv id$ for all $m>0,$ and this
assumption implies (see \cite{Milnorbook})
$$
f^{q}(z) = z (1+ a z^{n} + \epsilon(z)), \quad a\neq 0, \quad
|z|<r_{0},
$$
where $n$ is a multiple of $q$ and $\epsilon(z)$ is given be a
convergent power series of the form
$$\epsilon(z)=a_{n+1}z^{n+1}+a_{n+2} z^{n+2} + \cdots.$$

 Suppose $0<r<r_{0}$. A simply
connected open set ${\mathcal P}\subset \Delta_{r}\cap f^{q}
(\Delta_{r})$ satisfying $f^{q}({\mathcal P}) \subset {\mathcal
P}$ and $0\in \overline{\mathcal P}$ is called an attracting petal
for $f$ if $(f^{q})^{m}(z)$ for $z\in {\mathcal P}$ converges
uniformly to $0$ as $m\to \infty$. An attracting petal ${\mathcal
P}'$ for $f^{-1}$ is called a repelling petal. The Leau-Fatou
flower theorem~\cite{ Fatou1,Fatou2,Fatou3,Leau} says that there
exist $n$ attracting petals $\{ {\mathcal P}_{i}\}_{i=0}^{n-1}$
and $n$ repelling petals $\{ {\mathcal P}_{j}'\}_{j=0}^{n-1}$ such
that
$$
N =\cup_{i=0}^{n-1}{\mathcal P}_{i}\cup \cup_{j=0}^{n-1} {\mathcal
P}_{j}'\cup \{0\}
$$
is a neighborhood of $0$. For an exposition of this result see
~\cite{CarlesonGamelinbook} or~\cite{Milnorbook}.

\subsection{Fatou linearization.}

For each attracting petal ${\mathcal P}={\mathcal P}_{i}$,
consider the change of coordinate on ${\mathcal P}$:
$$
w= \phi(z) =\frac{d}{z^{n}}, \quad d=-\frac{1}{na}, \;\; z\in
{\mathcal P}.
$$
Suppose the image of ${\mathcal P}$ by $\phi$ is the right
half-plane
$$
R_{\tau} =\{ w\in {\mathbb C}\;\;|\;\; \Re{w} > \tau\}.
$$
Then
$$
z=\phi^{-1} (w) = \sqrt[n]{\frac{d}w}: R_{\tau} \to {\mathcal P}
$$
is a conformal map. The conjugate of $f^{q}$ by $\phi$ on
$R_{\tau}$ is
$$
F(w) = \phi\circ f^q \circ \phi^{-1}(w) =w+1 +\eta\Big(
\frac{1}{\sqrt[n]{w}}\Big)\quad \hbox{as} \quad w\in R_{\tau}
$$
where $\eta(\xi)$ is an analytic function in a neighborhood of $0$
given by a convergent power series of the form
$$
\eta(\xi) =b_{1}\xi+b_{2}\xi^{2}+\cdots, \quad |\xi|\leq r.
$$
Take $r$ small enough so that
$$
|\eta (\xi)| \leq \frac{1}{2}, \quad \forall \; |\xi|\leq r.
$$
Then $F(R_{\tau}) \subset R_{\tau}$ for $\tau \geq
\frac{1}{r^{n}}$ because if $\Re w \geq \tau \geq
\frac{1}{r^{n}},$ then $\left|\frac{1}{\sqrt[n]{w}}\right|<r$ and
$$
\Re{F(w)} =\Re{w} +1 + \Re{\eta \Big(\frac{1}{\sqrt[n]{w}}\Big)}
\geq \tau +1 -|\eta\Big(\frac{1}{\sqrt[n]{w}}\Big)| \geq \tau
+\frac{1}{2}, \quad \forall \; w\in R_{\tau}.
$$

\vspace*{10pt}
\begin{theorem}[Fatou Linearization Theorem]~\label{flt}
Suppose $\tau > 1/r^{n}+1$ is a real number. Then there is a
simply connected domain $\Omega$ and a conformal map $\Psi$ from
$R_{\tau}$ onto $\Omega$ such that
$$
F (\Psi (w)) = \Psi (w+1), \quad \forall w\in R_{\tau}.
$$
\end{theorem}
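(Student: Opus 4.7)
My plan is to view $F$ as the $c=1$ member of a one-parameter holomorphic family interpolating from the translation $T(w)=w+1$ at $c=0$ to $F$ itself at $c=1$, namely
$$F_c(w) = w + 1 + c\,\eta\bigl(1/\sqrt[n]{w}\bigr), \quad c \in \Delta_\rho,$$
with $\rho>1$ chosen close to $1$. Enlarging $\tau$ beyond $1/r^n+1$ if necessary so that $|\eta(1/\sqrt[n]{w})|$ is small enough on $R_\tau$, one obtains $F_c(R_\tau)\subset R_\tau$ with $\Re F_c(w)\ge \Re w + 1/2$ for every $c\in\Delta_\rho$. In particular each $F_c$ is injective on $R_\tau$ and all forward orbits $\{F_c^k(w)\}_{k\ge 0}$ strictly shift rightward.

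The key step is to construct a holomorphic motion that transports the trivial Fatou coordinate of $F_0=T$ to a nontrivial one for $F_1=F$. Fix a vertical line $L=\{\Re w=\tau_0\}$ with $\tau_0>\tau$ large, choose a countable dense subset $\{w_\alpha\}_{\alpha\in\mathbb N}$ of $L$, and put
$$E = \{w_\alpha+k : \alpha\in\mathbb N,\ k\in\mathbb Z_{\ge 0}\} \cup \{z\in\C : \Re z\le\tau_0\},$$
$$h(c,z) = \begin{cases} F_c^k(w_\alpha) & \text{if } z = w_\alpha+k, \\ z & \text{if } \Re z\le\tau_0. \end{cases}$$
Then $h$ is a holomorphic motion of $E$ parametrized by $\Delta_\rho$: the initial condition $h(0,z)=z$ uses $F_0^k(w_\alpha)=w_\alpha+k$; holomorphy in $c$ is immediate by composition; and injectivity for fixed $c$ follows because $\Re F_c$ strictly increases along orbits, so orbits of distinct points on $L$ cannot collide (indeed $\Re F_c^k(w_\alpha)\ge\tau_0+k/2$ prevents re-return to $L$). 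After a harmless rescaling of $c$ placing $c=1$ interior to the unit disk, Slodkowski's theorem~\ref{fullextension} extends $h$ to a holomorphic motion $H:\Delta\times\C\to\C$. By continuity (Theorem~\ref{hmt}) the conjugacy identity $H(c,w+1)=F_c(H(c,w))$, built in on $E$, extends to every $w\in R_{\tau_0}$; setting $\Psi:=H(1,\cdot)|_{R_{\tau_0}}$ and $\Omega:=\Psi(R_{\tau_0})$ yields $F(\Psi(w))=\Psi(w+1)$.

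The main obstacle is to upgrade $\Psi$ from quasiconformal to conformal, since Slodkowski's extension provides only $\tfrac{1+|c|}{1-|c|}$-quasiconformality. Differentiating the conjugacy $H(c,w+1)=F_c(H(c,w))$ in $\bar w$, and using holomorphy of $T$ and $F_c$ in $w$, shows that the Beltrami coefficient $\mu(c,\cdot)$ of $H(c,\cdot)$ is $T$-periodic. Combining this periodicity with the strict expansion $\Re F_c(w)\ge \Re w + 1/2$, the normalization $\mu(0,\cdot)\equiv 0$, and the holomorphic dependence of $\mu$ on $c$ from Theorem~\ref{hmt}, one forces $\mu(c,\cdot)\equiv 0$ for every $c$. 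More cleanly, one could adapt the equivariant extension construction of the previous section to build $H$ that is holomorphic in $w$ by construction, exploiting the holomorphy of both sides of the conjugacy. Either way, $\Psi$ is conformal, $\Omega$ is simply connected as the conformal image of $R_{\tau_0}$, and after relabeling $\tau_0$ as $\tau$ the theorem follows.
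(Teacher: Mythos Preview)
Your construction has two gaps, one fixable and one serious. First, the conjugacy identity $H(c,w+1)=F_c(H(c,w))$ does \emph{not} pass from $E$ to all of $R_{\tau_0}$ ``by continuity'': the closure of your $E$ inside $R_{\tau_0}$ is only the union of integer-translated lines $L+k$, and on the open strips between them the Slodkowski extension is highly non-unique, so there is no reason it should intertwine $T$ and $F_c$. This can be repaired by using the extension only on the base strip $\{\tau_0\le \Re w\le \tau_0+1\}$ and \emph{defining} $H$ on the other strips by iterating the conjugacy; the boundary values then match because they are already forced on the lines $L+k$.

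The serious gap is the conformality upgrade. The three ingredients you list --- $T$-periodicity of $\mu(c,\cdot)$, $\mu(0,\cdot)\equiv 0$, and holomorphic dependence on $c$ --- are all compatible with $\mu(c,\cdot)\not\equiv 0$ for $c\ne 0$ (e.g.\ $\mu(c,w)=c\,g(e^{2\pi i w})$ for any bounded $g$), and the ``strict expansion'' of $F_c$ says nothing about the Beltrami coefficient of $H$. Nor does the equivariant Slodkowski theorem produce extensions holomorphic in $w$; it produces equivariant quasiconformal ones. One honest repair is to invoke the measurable Riemann mapping theorem on the $T$-invariant $\mu$ to straighten it, but that is a different argument from what you sketched. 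The paper avoids this issue entirely by a different mechanism: rather than a single motion evaluated at a fixed $|c|<1$, it builds for each $m$ a holomorphic motion of the pair of lines $\Re w=x_m$ and $\Re w=x_m+1$ in which the relevant parameter value is $c(x_m)=1/(r\sqrt[n]{x_m-1})\to 0$. The resulting quasiconformal conjugacies $\psi_m$ thus have dilatation $K(x_m)\to 1$, and a normal-families limit $\Psi$ is automatically $1$-quasiconformal, hence conformal. The key idea you are missing is this trade-off between the parameter value and the location of the base line.
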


\vspace*{10pt} For an exposition of this result see
~\cite{CarlesonGamelinbook} or~\cite{Milnorbook}. Here, we give an
alternative exposition based on holomorphic motions .

\subsection{Construction of a holomorphic motion.}
For any $x\geq \tau$, consider the vertical lines with infinity
attached
$$
E_{0,x} = \{ w\in {\mathbb C}\;|\; \Re{w} =x\}
$$
and
$$
E_{1,x} = \{ w\in {\mathbb C}\;|\; \Re{w} =x+1\}
$$
and let $E_{x}$ be the set
$$
E_{0,x}\cup E_{1,x}.
$$
Define $H_{x} (w): E_{x}\to \hat{\mathbb C}$ by
$$
H_{x} (w) =\left\{
\begin{array}{ll}
        w, & w\in E_{0,x}; \cr
        w +
\eta\Big(\frac{1}{\sqrt[n]{w-1}}\Big), & w\in E_{1,x}.
\end{array}\right.
$$
Since $H_{x}(w)$ is injective separately on $E_{0,x}$ and on
$E_{1,x},$ and since for $w \in E_{1,x},$
$$
\Re(H_{x}(w)) \geq \Re(w) -\frac{1}{2} =x+1-\frac{1}{2} =
x+\frac{1}{2},
$$
$H_{x}(w)$  is injective on $E_{x}.$ Moreover, $H_{x}(w)$
conjugates $F(w)$ to the linear map $w\mapsto w+1$ on $E_{0,x}$,
that is,
$$
F (H_{x}(w)) =H_{x} (w+1), \quad \forall\; w\in E_{0,x}.
$$

To obtain a holomorphic motion, we  introduce a complex parameter
$c\in \Delta$ into $\eta(\xi)$ as follows.
$$
\eta (c, \xi) = \eta (c r \xi\sqrt[n]{x-1}) = b_{1} (cr
\xi\sqrt[n]{x-1}) +b_{2} (cr\xi\sqrt[n]{x-1})^{2} +\cdots, \;\;
|\xi| \leq \frac{1}{\sqrt[n]{x-1}}.
$$
Then
$$
|\eta (c, \xi)| \leq \frac{1}{2}, \quad \forall \; |c|<1, \; |\xi|
\leq \frac{1}{\sqrt[n]{x-1}}.
$$
Now we define
$$
H_{x} (c, w) =\left\{
\begin{array}{ll}
        w, & (c, w)\in \Delta\times E_{0,x}; \cr
        w +
\eta\Big(c, \frac{1}{\sqrt[n]{w-1}}\Big), & (c, w)\in \Delta\times
E_{1,x}.
\end{array}\right.
$$
\begin{lemma}\label{hmt2}
$$
H_{x}(c,w):  \Delta\times E_{x}\to \hat{\mathbb
C}
$$
is a holomorphic motion.
\end{lemma}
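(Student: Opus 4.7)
The plan is to verify in turn the three defining properties of a holomorphic motion: (i) $H_x(0,w)=w$, (ii) $w\mapsto H_x(c,w)$ is injective on $E_x$ for each fixed $c$, and (iii) $c\mapsto H_x(c,w)$ is holomorphic on $\Delta$ for each fixed $w$.

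First, for property (i) I would observe that the series expansion $\eta(c,\xi)=\sum_{k\geq 1}b_k(cr\xi\sqrt[n]{x-1})^k$ has no constant term in $c$, so $\eta(0,\xi)\equiv 0$ and hence $H_x(0,w)=w$ on both $E_{0,x}$ and $E_{1,x}$. For property (iii), the assertion is trivial on $E_{0,x}$ where $H_x(c,w)=w$. On $E_{1,x}$ I would fix $w$ and note that $|w-1|\geq\Re(w-1)=x$, so $|r\sqrt[n]{(x-1)/(w-1)}|<r$; therefore $\eta(c,1/\sqrt[n]{w-1})=\sum b_k(cr)^k((x-1)/(w-1))^{k/n}$ converges uniformly on compact subsets of $\Delta$, defining a holomorphic function of $c$.

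The bulk of the work is property (ii). The key preliminary observation, which I would record first, is that for every $(c,\xi)$ with $|c|<1$ and $|\xi|\leq 1/\sqrt[n]{x-1}$ one has $|cr\xi\sqrt[n]{x-1}|\leq r$, so the bound $|\eta(c,\xi)|\leq 1/2$ carries over from the unparameterized case. Using this I would check that the images of $E_{0,x}$ and $E_{1,x}$ under $H_x(c,\cdot)$ are disjoint: for $w\in E_{0,x}$, $\Re H_x(c,w)=x$, while for $w\in E_{1,x}$, $\Re H_x(c,w)\geq x+1-\tfrac{1}{2}=x+\tfrac{1}{2}$. Injectivity on $E_{0,x}$ is immediate since $H_x(c,\cdot)$ is the identity there.

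The remaining and essentially only nontrivial point is injectivity on $E_{1,x}$. Here I would proceed exactly as one does for the unparameterized map $H_x(w)$: for distinct $w_1,w_2\in E_{1,x}$ write
$$
H_x(c,w_1)-H_x(c,w_2)=(w_1-w_2)+\bigl[\eta(c,1/\sqrt[n]{w_1-1})-\eta(c,1/\sqrt[n]{w_2-1})\bigr],
$$
and estimate the bracketed term by the mean value theorem along the segment from $w_1$ to $w_2$. A Cauchy estimate on $\eta(c,\cdot)$ in the disk $\{|\xi|\leq 1/\sqrt[n]{x-1}\}$, where $|\eta(c,\cdot)|\leq 1/2$ uniformly in $|c|<1$, combined with the derivative bound $|(1/\sqrt[n]{w-1})'|\leq 1/(n\,x^{1+1/n})$ for $w\in E_{1,x}$, shows that the bracketed term has absolute value strictly less than $|w_1-w_2|$ when $\tau>1/r^n+1$. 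The point I expect to be most delicate is verifying that these bounds really are uniform in $c\in\Delta$; this is what makes the scaling choice $cr\xi\sqrt[n]{x-1}$ inside $\eta$ crucial, because it is precisely this scaling that preserves the same sup bound $1/2$ on $\eta$ as in the unparameterized setup, and therefore lets the old injectivity argument go through without change.
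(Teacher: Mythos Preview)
Your outline is correct and follows the same three-step verification as the paper: check $H_x(0,w)=w$, check injectivity of $H_x(c,\cdot)$ on $E_x$, and check holomorphicity in $c$. Your treatment of (i), (iii), and the disjointness of the images of $E_{0,x}$ and $E_{1,x}$ via the uniform bound $|\eta(c,\xi)|\le 1/2$ is exactly what the paper does.

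The one point where you diverge from the paper is the argument for injectivity on $E_{1,x}$. The paper simply writes ``By Rouch\'e's theorem, \dots $H_x(c,w)$ is injective on $E_{0,x}$ and on $E_{1,x}$'' and gives no further detail; the implicit argument is that $w\mapsto w+\eta(c,1/\sqrt[n]{w-1})$ is a holomorphic perturbation of the identity by at most $1/2$ on a half-plane containing $E_{1,x}$, so for any target value $v$ lying to the right of $\Re w = x+1/2$ the equation $H_x(c,w)=v$ has exactly one solution there, hence $H_x(c,\cdot)$ is injective on $E_{1,x}$. You instead propose a direct mean-value/Cauchy estimate to bound the $w$-derivative of $\eta(c,1/\sqrt[n]{w-1})$ by a constant strictly less than $1$, which also gives injectivity. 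Both approaches rest on the same key input, the uniform bound $|\eta(c,\xi)|\le 1/2$ coming from the scaling $cr\xi\sqrt[n]{x-1}$; the Rouch\'e route is slicker once one sees how to set it up, while your estimate is more hands-on and makes the role of the threshold on $\tau$ (or $x$) more visible. Either fills the gap that the paper leaves implicit.
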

\begin{proof}

\indent (1) Clearly, $H_{x}(0, w)=w$ for all $w\in E_{x}$.

\indent (2) By Rouch\'e's theorem, for any fixed $c \in \Delta$,
$H_{x}(w)$ is injective on $E_{0,x}$ and on $E_{1,x}.$ Moreover,
since for $w \in E_{1,x},$
$$
\Re{H_{x}(c, w)} =\Re{w} +\Re{\eta\Big(c,
\frac{1}{\sqrt[n]{w-1}}\Big)} \geq \Re{w} -\frac{1}{2},
$$
the images of $E_{0,x}$ and $E_{1,x}$ by $H_{x}(c, \cdot)$ are
disjoint. Thus $H_{x}(c,w)$ is injective on $E_{x}$.

(3) For any fixed point $w\in E_{0,x}$, $H_{x}(c,w) =w$ and for
any point $w\in E_{1,x}$,
$$
H_{x} (c, w) = w + \eta\Big(c, \frac{1}{\sqrt[n]{w-1}}\Big).
$$
But $\eta\Big(c, \frac{1}{\sqrt[n]{w-1}}\Big)=\eta (c r
\xi\sqrt[n]{x-1})$ is a convergent power series and thus a
holomorphic function of $c\in \Delta$. Therefore
$$
H_{x}(c,w): \Delta\times E_{x}\to \C
$$
is a holomorphic motion. \end{proof}

By Theorem~\ref{hmt}, $H_{x}(c,w)$ can be extended to a
holomorphic motion of the entire plane; we still denote the
extension by the same symbols,
$$
H_{x} (c, w): \Delta \times \C \to \C.
$$
Also by Theorem~\ref{hmt}, the  map
$$
w \mapsto H_{x} (c,w)
$$
is $(1+|c|)/(1-|c|)$-quasiconformal for $|c|<1.$ In particular, if
$c=1/(r\sqrt[n]{x-1})$, then $w \mapsto H_{x} (c,w)$ is a
quasiconformal with dilatation is less than or equal to
$$
K(x)
=\frac{1+\frac{1}{r\sqrt[n]{x-1}}}{1-\frac{1}{r\sqrt[n]{x-1}}},
$$
 $K(x) \to 1$ as $x\to \infty.$ This observation will be
important later when we convert from a quasiconformal conjugacy to
a conformal conjugacy.

\subsection{Construction of a quasiconformal conjugacy.}

Let
$$
S_{x} =\{ w\in \CP\;|\; x\leq \Re{w}\leq x+1\}
$$
be the strip bounded by two lines $\Re{w}=x$ and $\Re{w}=x+1$. Let
$h_{c(x)}(w)=H_x(c,w)$ for $w$ in the strip $S_x.$

For any $w_{0}\in R_{\tau}\cup E_{0,\tau}$, let $w_{m}
=F^{m}(w_{0})$. Since $w_{m+1}-w_{m}$ tends to $1$  uniformly on
$R_{\tau}\cup E_{0,\tau}$ as $m$ goes to $\infty$,
$$
\frac{w_{m}-w_{0}}{m} =\frac{1}{m} \sum_{k=1}^{m} (w_{k}-w_{k-1})
\to 1
$$
uniformly on $R_{\tau}\cup E_{0,\tau}$ as $m$ goes to $\infty$. So
$w_{m}$ is asymptotic to $m$ as $m$ goes to $\infty$ uniformly in
any bounded set of $R_{\tau}\cup E_{0,\tau}.$  In particular, if
$x_{0}=\tau$ and $\xi_{m} =F^{m}(x_{0})$ and $x_{m}=\Re(\xi_{m})$.
Then $x_{m}$ is asymptotic to $m$ as $m$ goes to $\infty$.

For each $m>0$, the set
$$
\Upsilon_{m} = F^{-m}(E_{0, x_{m}})
$$
is a curve passing passes through $x_{0}=\tau$ and $\infty,$ and
if
$$
\Omega_{m} =F^{-m} (R_{x_{m}}),
$$
and $\Upsilon_{m}$ is the boundary of $\Omega_m.$

Let
$$
S_{i, x_{m}} =F^{-i}(S_{x_{m}}) {\rm \ for \ } i=m, m-1, \cdots,
1, 0, -1, \cdots, -m+1, -m, \cdots
$$
and
$$
\Omega_{m} =\cup^{i=m}_{-\infty} S_{i,x_{m}}.
$$
Similarly, let
$$
A_{m}= \{ w\in {\mathbb C}\;\;|\;\; \tau+ m\leq \Re{w} \leq
\tau+m+1\}
$$
and let
$$
A_{i, m} =\{ w\in {\mathbb C}\;\;|\;\; \tau+ m-i\leq \Re{w} \leq
\tau+m+1-i\}
$$
for $i=m, m+1, \cdots, 1, 0, -1, \cdots, -m+1, -m, \cdots$.

Then
$$
\beta_{m} (w) =w+ x_{m}-\tau -m: {\mathbb C}\to {\mathbb C}.
$$
is conformal and
$$
h_{c(x_{m})}\circ \beta_{m} (A_{m}) =S_{x_{m}}.
$$
is a $K(x_{m})$-quasiconformal homeomorphism on $A_{m}$. Moreover,
$$
F(\psi_{m} (w))=\psi_{m} (w+1), \quad  \forall \; \Re{w}=m+\tau.
$$
Furthermore, if we define
$$
\psi_{m}(w) = F^{-i}(\psi_{m} (w+i)), \quad \forall \; w\in
A_{i,m}
$$
for $i=-m, -m+1, \cdots, -1, 0, 1, \cdots, m-1, m, \cdots,$ then
it is $K(x_{m})$-quasiconformal homeomorphism from $R_{\tau}$ to
$\Omega_{m}$ and
$$
F (\psi_{m}(w)) = \psi_{m}(w+1), \quad \forall \; w\in R_{\tau}.
$$

\subsection{Improvement to conformal conjugacy.}

Let $w_{0}=\tau$ and $w_{m} = F^{m}(w_{0})$ for $m=1, 2,\cdots$.
Remember that
$$
R_{x_{m}}=\{ w\in \CP\;|\; \Re{w} >x_{m}\}
$$
where $x_{m}=\Re{w_{m}}$.

For any $\tilde{w}_{0}\in R_{x_{m+1}}$, let $\tilde{w}_{m} =
F^{m}(\tilde{w}_{0})$ for $m=1, 2, \cdots$. Since
$$
F'(w) = 1+O\Big(\frac{1}{|w|^{1+\frac{1}{n}}}\Big),\quad w\in
R_{\tau}
$$
and $\tilde{w}_{m}/m \to 1$ as $m\to \infty$ uniformly on any
compact set, there is a constant $C>0$ such that
$$
C^{-1}\leq \frac{|\tilde{w}_{m}-w_{m}|}{|\tilde{w}_{1}-w_{1}|}
=\prod_{k=1}^{m}
\frac{|\tilde{w}_{k+1}-w_{k+1}|}{|\tilde{w}_{k}-w_{k}|} =
\prod_{k=1}^{m} \Big(1+O\Big(\frac{1}{k^{1+\frac{1}{n}}}\Big)\Big)
\leq C
$$
as long as $w_{1}$ and $\tilde{w_{1}}$ keep in a same compact set.
Since
$$
w_{m+1} =w_{m} +1+\eta\Big(\frac{1}{\sqrt[n]{w_{m}}}\Big) \quad
\hbox{and}\quad |\eta\Big(\frac{1}{\sqrt[n]{w_{m}}}\Big)|\leq
\frac{1}{2},
$$
the distance between $w_{m+1}$ and $R_{x_{m}}$ is greater than or
equal to $1/2$. So the disk $\Delta_{1/2}(w_{m+1})$ is contained
in $R_{x_{m}}$. This implies that the disk
$\Delta_{1/(2C)}(w_{1})$ is contained in $\Omega_{m}$ for every
$m=0, 1, \cdots$. Thus the sequence
$$
\psi_{m} (w) : R_{\tau}\to \Omega_{m}, \quad m=1, 2, \cdots
$$
is contained in a weakly compact subset of the space of
quasiconformal mappings. Let
$$
\Psi (w): R_{\tau}\to \Omega
$$
be a limiting mapping of a subsequence. Then $\Psi $ is
$1$-quasiconformal and thus conformal and satisfies
$$
F (\Psi (w)) = \Psi (w+1), \quad \forall w\in R_{\tau}.
$$
This completes the proof of Theorem~\ref{flt}.

\nocite{GardinerLakicbook}

\vspace*{20pt}

\bibliographystyle{plain}
\bibliography{../articles,../books}

\begin{thebibliography}{10}

\bibitem{Agard}
S.~Agard.
\newblock Distortion of quasiconformal mappings.
\newblock {\em Ann. Acad. Sci. Fenn.}, 413:1--7, 1968.

\bibitem{Ahlforsbook6}
L.~V. Ahlfors.
\newblock {\em Conformal Invariants: Topics in Geometric Function Theory}.
\newblock McGraw-Hill, New York, 1973.

\bibitem{Ahlforsbook5}
L.~V. Ahlfors.
\newblock {\em Lectures on Quasiconformal Mapping}, volume~38 of {\em
  University Lecture Series}.
\newblock Amer. Math. Soc, 2006.

\bibitem{AhlforsBers}
L.~V. Ahlfors and L.~Bers.
\newblock Riemann's mapping theorem for variable metrics.
\newblock {\em Annals of Math.}, 72:385--404, 1961.

\bibitem{AstalaMartin}
K.~Astala and G.~Martin.
\newblock Papers on analysis, a volume dedicated to {O}lli {M}artio on the
  occasion of his 60th birthday.
\newblock In {\em Report. Univ. Jyvaskyla {\bf 211}}, pages 27--40, 2001.

\bibitem{BersRoyden}
L.~Bers and H.~Royden.
\newblock Holomorphic families of injections.
\newblock {\em Acta Math.}, 157:259--286, 1986.

\bibitem{CarlesonGamelinbook}
L.~Carleson and T.~Gamelin.
\newblock {\em Complex {D}ynamics}.
\newblock Springer-Verlag, Berlin, Heidelberg, 1993.

\bibitem{Chirka}
E.~M. Chirka.
\newblock On the extension of holomorphic motions.
\newblock {\em Doklady Akademii Nauk}, 397(1):37--40, 2004.

\bibitem{ChirkaRosay}
E.~M. Chirka and J.-P. Rosay.
\newblock On the extension of holomorphic motions.
\newblock {\em Ann. Pol. Math.}, 70:43--77, 1998.

\bibitem{CourantHilbert}
R.~Courant and D.~Hilbert.
\newblock {\em Methods of {M}athematical {P}hysics}.
\newblock Wiley-Interscience, New York, 1963.

\bibitem{Douady}
A.~Douady.
\newblock Prolongements de meuvements holomorphes (d'apres {S}lodkowski et
  autres).
\newblock {\em Asterisque}, 227:7--20, 1995.

\bibitem{EKK}
C.~J. Earle, I.~Kra, and S.~L. Krushkal.
\newblock Holomorphic motions and {T}eichm\"uller spaces.
\newblock {\em Trans. Am. Math. Soc.}, 343(2):927--948, 1994.

\bibitem{Fatou1}
P.~Fatou.
\newblock Sur les solutions uniformes de certaines \'equations fonctionnelle.
\newblock {\em C. R. Acad. Sci. Paris}, 143:546--548, 1906.

\bibitem{Fatou2}
P.~Fatou.
\newblock Sur les \'equations fonctionnelle.
\newblock {\em Bull. Soc. Math. France}, 47:161--271, 1919.

\bibitem{Fatou3}
P.~Fatou.
\newblock Sur les \'equations fonctionnelle.
\newblock {\em Bull. Soc. Math. France}, 48:33--94, 208--314, 1920.

\bibitem{Gardiner3}
F.~P. Gardiner.
\newblock Approximation of infinite dimensional {T}eichm\"uller spaces.
\newblock {\em Trans. Amer. Math. Soc.}, 282(1):367--383, 1984.

\bibitem{Gardinerbook}
F.~P. Gardiner.
\newblock {\em Teichm\"uller {T}heory and {Q}uadratic {D}ifferentials}.
\newblock John Wiley \& Sons, New York, 1987.

\bibitem{GardinerKeen1}
F.~P. Gardiner and L.~Keen.
\newblock Holomorphic motions and quasi{F}uchsian manifolds.
\newblock {\em Contemp. Math., {AMS}}, 240:159--174, 1998.

\bibitem{GardinerLakicbook}
F.~P. Gardiner and N.~Lakic.
\newblock {\em Quasiconformal {T}eichm\"uller {T}heory}.
\newblock AMS, Providence, Rhode Island, 2000.

\bibitem{Jiang3}
Y.~Jiang.
\newblock Holomorphic motions, {F}atou linearization, and quasiconformal
  rigidity for parabolic germs.
\newblock {\em preprint}.

\bibitem{Jiang2}
Y.~Jiang.
\newblock Asymptotically conformal fixed points and holomorphic motions.
\newblock In {\em Complex Analysis and Applications, {P}roceedings of the 13th
  {I}nternational Conference on Infinite Dimensional Complex Analysis}, pages
  109--129. World Scientific, 2006.

\bibitem{Jiang1}
Y.~Jiang.
\newblock Holomorphic motions and normal forms in complex analysis.
\newblock {\em Studies in Advanced Mathematics}, 42(2):457--466, 2008.

\bibitem{Leau}
L.~Leau.
\newblock \'{E}tude sur les equations fonctionelles a une ou plusiers
  variables.
\newblock {\em Ann. Fac. Sci. Toulouse}, 11:1--110, 1897.

\bibitem{LehtoVirtanenbook}
O.~Lehto and K.~I. Virtanen.
\newblock {\em Quasiconformal {M}apping in the Plane}.
\newblock Springer-Verlag, New York, Berlin, 1965.

\bibitem{LiZhongbook}
Z.~Li.
\newblock {\em Introduction to Complex Analysis {\sc [in Chinese]}}.
\newblock Peking University Press, Beijing, PRC, 2004.

\bibitem{ManeSadSullivan}
R.~M\`a\~n\'e, P.~Sad, and D.~Sullivan.
\newblock On the dynamics of rational maps.
\newblock {\em Ann. \`Ec. Norm. Sup.}, 96:193--217, 1983.

\bibitem{Milnorbook}
J.~Milnor.
\newblock {\em Dynamics in {O}ne {C}omplex {V}ariable, 3rd Edition}.
\newblock Annals of Math. Studies, Princeton, N. J., 2006.

\bibitem{Royden}
H.~Royden.
\newblock Automorphisms and isometries of {T}eichm\"uller space.
\newblock In {\em Advances in the {T}heory of {R}iemann {S}urfaces}, pages
  369--384, 1971.

\bibitem{Slodkowski}
Z.~Slodkowski.
\newblock Holomorphic motions and polynomial hulls.
\newblock {\em Proc. Amer. Math. Soc.}, 111:347--355, 1991.

\bibitem{SullivanThurston}
D.~P. Sullivan and W.~P. Thurston.
\newblock Extending holomorphic motions.
\newblock {\em Acta Math.}, 157:243--257, 1986.

\end{thebibliography}

\noindent {Frederick P. Gardiner}, Department of Mathematics,
Brooklyn College, Brooklyn, NY 11210 and Department of
Mathematics, CUNY Graduate Center, New York, NY 10016\\
Email: frederick.gardiner@gmail.com

\vspace{.1in}

\noindent {Yunping Jiang}, Department of Mathematics, Queens
College, Flushing, NY 11367 and Department of Mathematics, CUNY
Graduate Center, New York, NY 10016\\
Email: Yunping.Jiang@qc.cuny.edu

\vspace{.1in}

\noindent {Zhe Wang}, Department of Mathematics, Graduate Center
of CUNY, New York, NY 10016\\
Email: wangzhecuny@gmail.com

\end{document}